\def\minwrt[#1]{\underset{#1}{\text{minimize }}}
\def\argminwrt[#1]{\underset{#1}{\text{arg min }}}
\def\maxwrt[#1]{\underset{#1}{\text{maximize }}}
\def\argmaxwrt[#1]{\underset{#1}{\text{arg max }}}
\def\maxemphwrt[#1]{\underset{#1}{\text{\emph{maximize} }}}
\newcommand{\ett}{{\bf 1}}
\newcommand{\parent}{p}
\newcommand{\mR}{{\mathbb R}}
\newcommand{\diag}{{\rm diag}}
\newtheorem{remark}{Remark}
\newtheorem{example}{Example}
\newcommand{\tr}{{\rm trace}}
\def\bC{{\bf C}}
\def\bK{{\bf K}}
\def\bM{{\bf M}}
\def\bU{{\bf U}}
\def\bV{{\bf V}}
\def\ccE{{\mathcal{E}}}
\def\ccG{{\mathcal{G}}}
\def\ccL{{\mathcal{L}}}
\def\ccM{{\mathcal{M}}}
\def\ccN{{\mathcal{N}}}
\def\ccP{{\mathcal{P}}}
\def\ccT{{\mathcal{T}}}
\def\ccV{{\mathcal{V}}}
\def\ccW{{\mathcal{W}}}
\def\RR{{\mathbb{R}}}
\begin{document}

\raggedbottom

\title{Multi-Marginal Optimal Transport\\ with a Tree-structured cost and\\ the Schr\"odinger Bridge Problem%
\thanks{This work was supported by the Swedish Research Council (VR), grant 2014-5870, KTH Digital Futures, SJTU-KTH cooperation grant, the NSF under grant 1901599 and 1942523, and Knut and Alice Wallenberg foundation under grant KAW 2018.0349.}}

\author{Isabel Haasler\thanks{Division of Optimization and Systems Theory, Department of Mathematics, KTH Royal Institute of Technology, Stockholm, Sweden. {\tt\small haasler@kth.se}, {\tt\small johan.karlsson@math.kth.se}}
\and Axel Ringh\thanks{Department of Electronic and Computer Engineering, The Hong Kong University of Science and Technology, Clear Water Bay, Kowloon, Hong Kong, China. {\tt\small eeringh@ust.hk}}
\and Yongxin Chen\thanks{School of Aerospace Engineering,
Georgia Institute of Technology, Atlanta, GA, USA. {\tt\small yongchen@gatech.edu}}
\and Johan Karlsson\footnotemark[2]} 
%

\maketitle

\begin{abstract}
The optimal transport problem has recently developed into a powerful framework for various applications in estimation and control.
Many of the recent advances in the theory and application of optimal transport are based on 
regularizing the problem with an entropy term, which connects it to 
the Schr\"odinger bridge problem and thus to stochastic optimal control.
Moreover, the entropy regularization makes 
the otherwise computationally demanding optimal transport problem feasible even for large scale settings. This has led to an accelerated development of optimal transport based methods in a broad range of fields. 
Many of these applications have an underlying graph structure, for instance information fusion and tracking problems can be described by trees.
In this work we consider multi-marginal optimal transport problems with a cost function that decouples according to a tree structure.
The entropy regularized multi-marginal optimal transport problem can be viewed as a generalization of the Schr\"odinger bridge problem with the same tree-structure, and by utilizing these connections we extend the computational methods for the classical optimal transport problem in order to solve structured multi-marginal optimal transport problems in an efficient manner. In particular, the algorithm requires only matrix-vector multiplications of relatively small dimensions. 
We show that the multi-marginal regularization 
introduces less diffusion, compared to the commonly used pairwise regularization, and is therefore more suitable for many applications.
Numerical examples illustrate this, and we finally apply the proposed framework for tracking of an ensemble of indistinguishable agents.

\end{abstract}

\begin{keywords}
Multi-marginal optimal transport, Schr\"odinger bridge, Hidden Markov chain, graph signal processing, ensemble estimation.
\end{keywords}

\section{Introduction}


An optimal transport problem is to find a transport plan that minimizes the cost of moving the mass of one distribution to another distribution\cite{villani2008optimal}. 
Historically this problem has been important in economics and operations research, but as a result of recent progress in the area 
it has become a popular tool in a wide range of fields such as control theory \cite{yang2017, chen2016optimalPartI, mikami2008optimal,Ghoussoub18, bayraktar2018martingale, acciaio2019extended}, signal processing \cite{elvander19multi,Kolouri17omt}, computer vision \cite{Dominitz10texture,solomon2015},  and machine learning \cite{Adler17inverse,ArjChiBot17,MonMulCut16}. 

An extension to the standard optimal transport framework is multi-marginal optimal transport \cite{pass2015multi}, which seeks a transport plan between not only two, but several distributions.
Early works on multi-marginal optimal transport include \cite{ruschendorf2002n, ruschendorf1995optimal, gangbo1998optimal}.
In this work we consider multi-marginal optimal transport problems with cost functions that decouple according to a tree structure. We refer to such a problem as a tree-structured multi-marginal optimal transport problem.
This should not be confused with optimal transport problems on graphs as in \cite{conforti2017reciprocal, chen2016robust, chow2012fokker}, where 
the distributions are defined over the nodes of the graphs.
Tree-structured cost functions generalize many structures that are commonly used in applications of optimal transport.
For example, 
a path tree naturally appears in tracking and interpolation applications \cite{chen2018state, solomon2015, Bonneel11}. Similarly, star trees are used in barycenter problems, which occur for instance in information fusion applications \cite{cuturi2014barycenter, elvander2018tracking}.

The optimal transport problem can be formulated as a linear program.
However, in many practical applications the problem is too large to be solved directly.
For the bi-marginal case,  these computational limitations have recently been alleviated by regularizing the problem with an entropy term \cite{cuturi2013sinkhorn}. The optimal solution to the regularized optimal transport problem can then be expressed in terms of dual variables, which can be efficiently found by an iterative scheme, called Sinkhorn iterations \cite{Sinkhorn67}.
For the multi-marginal case, although the Sinkhorn iterations can be generalized in a straight-forward fashion \cite{benamou2015bregman}, the complexity of the scheme increases dramatically with the number of marginals \cite{lin2019complexity}. Thus Sinkhorn iterations alone are not sufficient to address many multi-marginal problems. 
However, in specific settings, structures in the cost function can be exploited in order to 
derive computationally feasible methods, e.g., for Euler flows \cite{benamou2015bregman} and in tracking and information fusion applications \cite{elvander19multi}.

Unregularized multi-marginal optimal transport problems with transport cost that decouples according to a tree structure
can be equivalently formulated as a sum of coupled
 pairwise optimal transport problems, as for instance, tracking and interpolation problems \cite{elvander2018tracking, chen2018state, Bonneel11}, and barycenter problems \cite{agueh2011barycenters,cuturi2014barycenter,benamou2015bregman,solomon2015}. 
 Typically these problems are solved using pairwise regularization
(see, e.g., \cite[Sec.~3.2]{benamou2015bregman} and \cite{kroshnin2019complexity, lin2020revisiting, bonneel2016wasserstein}).
However, we have empirically observed in some applications that the multi-marginal formulation yields favourable solutions compared to a corresponding pairwise optimal transport estimate \cite{elvander19multi}.
One main contribution of
this work is to develop a framework for solving tree-structured optimal transport problems with a multi-marginal regularization.
For these problems, we show that the Sinkhorn algorithm can be performed in an efficient way, requiring only successive matrix-vector multiplications of relatively small size compared to that of the original multi-marginal problem. 
Thus we extend the computational results from \cite{elvander19multi} to general trees.

The entropy regularized formulation of optimal transport is connected to another classical topic, the Schr\"odinger bridge problem \cite{chen2016relation, chen2016hilbert, Leo12, leonard2013schrodinger, Mikami2004}. Schr\"odinger was interested in determining the most likely evolution of a particle cloud observed at two time instances, where the particle dynamics have deviated from the expected Brownian motion \cite{schrodinger1931}.
Schr\"odinger showed that this particle evolution can be characterized as the one out of all the theoretically possible ones, that minimizes the relative entropy to the Wiener measure. 
This optimal solution may be found by solving a so-called Schr\"odinger system, which turns out to be tightly connected to the Sinkhorn iterations for the entropy regularized optimal transport problem \cite{chen2016hilbert,leonard2013schrodinger}.
This framework has been used in robust and stochastic control problems \cite{Vladimirov15, chen2016optimalPartI}. 
A version of the Schr\"odinger bridge problem, that is discrete in both time and space, can be formulated by modeling the evolutions of a number of particles as a Markov chain \cite{pavon2010discrete, Georgiou2015discreteSB}. In the infinite particle limit, the maximum likelihood solution of the Markov process can then be approximated by solving a relative entropy problem \cite{chen2016robust}. 
An analogous approach has recently been used to develop a framework for modelling ensemble flows on Hidden Markov chains \cite{haasler19ensemble}. Other optimal transport based state estimation problems for a continuum of agents and in continuous time have been considered in \cite{chen2018state,chen2018measure,CheConGeoRip19}. For further work on ensemble controllability and observability see, e.g., \cite{zeng2019sample,chen2019structure}.

Based on \cite{haasler19ensemble}, we extend the  discrete Schr\"odinger bridge in \cite{pavon2010discrete} to trees. In particular, we derive a maximum likelihood estimate for Markov processes defined on the edges of a rooted directed tree. This leads to our second main result:
Interestingly, it turns out that the solution to the entropy regularized tree-structured multi-marginal optimal transport problem 
corresponds to the solution of the generalized Schr\"odinger bridge with the same tree-structure.
This generalizes the established equivalence of the classical bi-marginal entropy regularized optimal transport problem and the Schr\"odinger bridge problem \cite{leonard2013schrodinger}. Moreover, this also gives an additional motivation for using multi-marginal optimal transport instead of the often used pairwise optimal transport for tree structured problems. 

This paper is organized as follows:
Section~\ref{sec:background} is an introduction to the problems of optimal transport and Schr\"odinger bridges. 
The main results are presented in Section~\ref{sec:omt_tree} and \ref{sec:HMM_tree}.
In Section~\ref{sec:omt_tree} we define the tree-structured multi-marginal optimal transport problem, and provide an algorithm for efficiently solving its entropy regularized version.
In Section~\ref{sec:HMM_tree} we generalize the Schr\"odinger bridge problem to trees, and show that it is equivalent to an entropy regularized multi-marginal optimal transport problem on the same tree. 
Section~\ref{sec:pairwise} is on the discrepancy between the multi-marginal and pairwise optimal transport formulations with tree structure.
A larger numerical simulation is detailed in Section~\ref{sec:ensemble_flows}, where the proposed framework is used to estimate ensemble flows from aggregate and incomplete measurements.
Most of the proofs can be found in the Appendix, however due to space limitations two of the proofs are deferred to the supplementary material.

\section{Background} \label{sec:background}
In this section we summarize some background material on optimal transport and Schr\"odinger bridges. This is also used to set up the notation. To this end, first note that throughout we let $\exp( \cdot )$, $\log( \cdot )$, $\odot$, and $./$ denote the elementwise exponential, logarithm, multiplication, and division of vectors, matrices, and tensors, respectively. Moreover, $\otimes$ denotes the outer product. By $\ett$ we denote a column vector of ones, the size of which will be clear from the context.

\subsection{Optimal transport} \label{sec:omt}

In this work we consider the discrete optimal transport problem. For its continuous counterpart see, e.g., \cite{villani2008optimal}. 
Let the vectors $\mu_1, \mu_2 \in \RR_+^n$ describe two nonnegative distributions with equal mass.
The optimal transport problem is to find a mapping that moves the mass from $\mu_1$ to $\mu_2$, while minimizing the total transport cost.
Here the transport cost is defined in terms of a underlying cost matrix $C \in \RR_{+}^{n\times n}$, where $C_{i_1, i_2}$ denotes the cost of moving a unit mass from location $i_1$ to $i_2$.
Analogously, define a transport plan $M\in \RR^{n\times n}_+$, where $M_{i_1, i_2}$ describes the amount of mass that is moved from location $i_1$ to $i_2$.
An optimal transport plan from $\mu_1$ to $\mu_2$ is then a minimizing solution of
\begin{equation} \label{eq:omt_bi}
\begin{aligned}
T ( \mu_1, \mu_2 ) := \minwrt[M \in \RR^{n\times n}_+] & \tr(C^T M) \\
\text{ subject to } & M \ett = \mu_1,\quad M^T \ett = \mu_2 .
\end{aligned}
\end{equation}

Multi-marginal optimal transport extends the concept of the classical optimal transport problem \eqref{eq:omt_bi} to the setting with a set of marginals $\mu_1,\dots,\mu_J$, where $J\geq 2$ \cite{pass2015multi,benamou2015bregman,elvander19multi}. In this setting, the transport cost and transport plan in \eqref{eq:omt_bi} are described by J-mode tensors $\bC,\bM\in\RR^{n\times n \dots \times n}_+$. 
For a tuple $(i_1,\dots,i_J)$, the value $\bC_{i_{1},\dots,i_{J}}$ denotes the 
transport cost for a unit mass corresponding to the tuple, and similarly $\bM_{i_{1},\dots,i_{J}}$ represents the amount of transported mass associated with this tuple.
The multi-marginal optimal transport problem then reads
\begin{equation} \label{eq:omt_multi_discrete}
\begin{aligned}
\minwrt[ \bM \in \RR^{n\times \dots \times n}_+] & \langle \bC, \bM \rangle  \\
\text{ subject to } & P_j (\bM) = \mu_j,  \text { for } j \in \Gamma,
\end{aligned}
\end{equation}
where $\langle \bC, \bM \rangle = \sum_{i_1,\dots,i_J} \bC_{i_1,\dots,i_J} \bM_{i_1,\dots,i_J}$ is the standard inner product, the projection on the $j$-th marginal of $\bM$, denoted by $P_j(\bM)$, is defined  as
\begin{equation} \label{eq:proj_discrete}
P_j(\bM)_{i_j} = \sum_{i_1,\dots,i_{j-1},i_{j+1},\ldots, i_J} \bM_{i_1,\dots,i_{j-1},i_j,i_{j+1},\dots,i_J},
\end{equation}
and $\Gamma$ denotes an index set corresponding to the given marginals.
In the original multi-marginal optimal transport formulation, constraints are typically given on all marginals, i.e., for the index set $\Gamma = \{1,2,\dots,J\}$. However, in this work we consider the case, where constraints typically are only imposed on a subset of marginals, i.e., $\Gamma \subset \{1,2,\dots,J\}$.
Note that the standard bi-marginal optimal transport problem \eqref{eq:omt_bi} is a special case of the multi-marginal optimal transport problem \eqref{eq:omt_multi_discrete}, where $J=2$ and $\Gamma=\{1,2\}$.

\subsection{Entropy regularized optimal transport} \label{sec:entropy_reg}

Although linear, the number of variables in the multi-marginal optimal transport problem \eqref{eq:omt_multi_discrete} is often too large to be solved directly. 
A popular approach for the bi-marginal setting to bypass the size of the problem has been to add a regularizing entropy term to the objective. In theory the same approach can be used also for the multi-marginal case. 
\begin{definition}[\!\!{\cite[Ch.~4]{Gzyl95entropy}}] \label{def:kl}
	Let $p$ and $q$ be two nonnegative vectors, matrices or tensors of the same dimension. The normalized Kullback-Leibler (KL) divergence of $p$ from $q$ is defined as $H(p|q) :=	\sum_{i} \left(p_i \log\left( p_i /q_i \right) -p_i + q_i\right)$,
	where $0\log 0$ is defined to be $0$. Similarly, define $	H( p) := H(p| \ett) = \sum_{i} \left(p_i \log( p_i) -p_i +1\right)$, which is effectively the negative of the entropy of $p$.
\end{definition}
Note that $H(p|q)$ is jointly convex over $p, q$. For a detailed description of the KL divergence see, e.g.,\cite{cover2012elements,Cziszar91entropy}. 
The entropy regularized multi-marginal optimal transport problem is the convex problem
\begin{equation} \label{eq:omt_multi_regularized}
\begin{aligned}
\minwrt[ \bM \in \RR^{n\times \dots \times n}_+] & \langle \bC, \bM \rangle + \epsilon H(\bM) \\
\text{ subject to } & P_j (\bM) = \mu_j,  \text { for } j \in \Gamma,
\end{aligned}
\end{equation}
where $\epsilon>0$ is a regularization parameter.
For the bi-marginal case \eqref{eq:omt_bi}, where the cost and mass transport tensors are matrices, the entropy regularized problem reads
\begin{equation} \label{eq:omt_reg}
\begin{aligned}
T_\epsilon( \mu_1, \mu_2 ) := \minwrt[M \in \RR^{n\times n}_+] & \tr(C^T M) + \epsilon H(M) \\
\text{ subject to } & M \ett = \mu_1,\quad M^T \ett = \mu_2 .
\end{aligned}
\end{equation}
It is well established that the entropy regularized bi-marginal optimal transport problem is connected to the Schr\" odinger bridge problem \cite{chen2016relation, chen2016hilbert, leonard2013schrodinger, Mikami2004}, which is introduced in Section \ref{sec:schrodinger}.
More importantly from a computational perspective, the introduction of the entropy term in problem \eqref{eq:omt_reg} allows for expressing the optimal solution $M$ in terms of the Lagrange dual variables, which may be 
computed by Sinkhorn iterations\cite{cuturi2013sinkhorn}.
This procedure can be generalized to the setting of multi-marginal optimal transport \cite{benamou2015bregman}.

In particular, for the multi-marginal entropy regularized optimal transport problem \eqref{eq:omt_multi_regularized} it can be shown that the optimal solution is of the form\cite{elvander19multi}
\begin{equation} \label{eq:MKU}
\bM = \bK \odot \bU
\end{equation}
where $\bK = \exp(- \bC/\epsilon)$ and where $\bU$ can be decomposed as
\begin{equation}\label{eq:U}
	\bU= u_1 \otimes u_2 \otimes \dots \otimes u_J \quad \text{with } u_j = \begin{cases} \exp(  \lambda_j/\epsilon),& \text{ if } j \in \Gamma \\
	\ett,& \text{ else.} \end{cases}
	\end{equation}
Here $\lambda_j\in \mR^n$ for $j \in \Gamma$ are optimal dual variables  in the dual problem
of \eqref{eq:omt_multi_regularized}: 
\begin{equation} \label{eq:multi_omt_dual}
\maxwrt[\lambda_j\in \mR^n,\, j\in \Gamma] - \epsilon \langle\bK, \bU \rangle + \sum_{j \in \Gamma} \lambda_j^T \mu_j,
\end{equation}
where $\bU$ depends on the variables $\lambda_j$ as specified by \eqref{eq:U}. 
Note that the dual variable $\lambda_j$
corresponds to the constraint on the $j$-th marginal.
For details the reader is referred to, e.g., \cite{elvander19multi, benamou2015bregman}.

The Sinkhorn scheme for finding $\bU$ in \eqref{eq:U},  
is to iteratively update $u_j$ as 
\begin{equation} \label{eq:sinkhorn_multi}
u_j \leftarrow u_j \odot \mu_j ./ P_j(\bK \odot \bU),
\end{equation}
for all $j\in\Gamma$. This scheme may for instance be derived as Bregman projections \cite{benamou2015bregman} or a block coordinate ascend in the dual \eqref{eq:multi_omt_dual}, \cite{ringh2017sinkhorn,elvander19multi,tseng1990dual}. 
As a result, global convergence of the Sinkhorn scheme \eqref{eq:sinkhorn_multi} is guaranteed \cite{BauLew00}.
For the sake of completeness, we also provide a result on the linear convergence rate in our presentation (see Theorem~\ref{thm:convergence}).
We also note that \eqref{eq:sinkhorn_multi} reduces to standard Sinkhorn iterations, 
\begin{equation} \label{eq:sinkhorn_bimarginal}
u_1 \leftarrow \mu_1./(K u_2),\quad u_2 \leftarrow \mu_1./ (K^T u_1),
\end{equation}
for the two-marginal case \eqref{eq:omt_reg}. 
The iterations \eqref{eq:sinkhorn_bimarginal} converge linearly to an optimal solution $u_1,u_2$, which is unique up to multiplication/division with a constant \cite{chen2016hilbert, franklin1989}. 

The computational bottleneck of the Sinkhorn iterations \eqref{eq:sinkhorn_multi} is computing the projections $P_j(\bM)$, for $j \in \Gamma$, which in general scales exponentially in $J$. 
In fact, even storing the tensor $\bM$ is a challenge as it  consists of $n^J$ elements.
However, in many cases of interest, structures in the cost tensors can be exploited to make the computation of the projections feasible. In \cite{elvander19multi} this is shown on the example of cost functions that decouple sequentially, centrally, or a combination of both. Computing the projections requires then only repeated matrix vector multiplications. In this work, we show that 
these efficient methods can be generalized to the setting where
the cost tensor decouples according to a tree structure, that is, when the marginals of the optimal transport problem are associated with the nodes of a tree, and cost matrices are defined on its edges (see Section \ref{sec:omt_tree}).

\subsection{Schr\"odinger bridge problem} \label{sec:schrodinger}
The Schr\"odinger bridge problem is to determine the most likely evolution of a particle cloud observed at two time instances \cite{schrodinger1931}. In the case that the second observation cannot be explained as a Brownian motion of the initially observed particle cloud, Schr\"odinger aimed to find the most likely particle evolution connecting, hence bridging, the two distributions.

A useful mathematical framework to solve this problem, however not yet developed in Schr\"odingers time, is the theory of large deviations, which studies so called rare events, meaning deviations from the law of large numbers \cite{dawson1990schrodinger,dembo2009large}. The probability of these rare events approaches zero, as the number of trials goes to infinity, and large deviation theory analyzes the rate of this decay, which can often be expressed as the exponential of a so called rate function \cite{ellis2006book, dembo2009large}.

For a large deviation interpretation of the Schr\"odinger bridge \cite[Sec.~II.1.3]{Follmer88}, the particle evolutions are
modelled as independent identically distributed random variables on path space, and the Schr\"odinger bridge is the probability measure $\ccP$ on path space that is most likely to describe the rare event of observing the two particle distributions. Let $\ccW$ be the Wiener measure, which corresponds to the probability law of the Brownian motion. Then the Schr\"odinger bridge $\ccP$ is found by minimizing the corresponding rate function $\int \log(d\ccP/d\ccW) d\ccP$ over all probability measures that are absolutely continuous with respect to $\ccW$ and have the given particle cloud distributions as marginals.
A space and time discrete Schr\"odinger bridge problem for Markov chains is treated in \cite{pavon2010discrete, Georgiou2015discreteSB}.

We follow the exposition in \cite{haasler19ensemble}, where the space and time discrete Schr\"odinger bridge has been derived as a maximum likelihood approximation for Markov chains.
Consider a cloud of $N$ particles and assume that each particle evolves according to a Markov chain. 
Denote the states of the Markov chain by $X={\left\{X_1,X_2,\dots,X_n\right\}}$ and let the transition probability matrix be $A^j \in \RR^{n\times n}_+$, with elements $A^j_{k\ell} = P(q_{j+1}=X_\ell | q_{j}=X_k)$, where $q_j$ denotes the state at time $j$. 
Let the vector $\mu_j$ describe the particle distribution over the discrete state space $X$ at time $j$, for $j=1,\dots,J$.
As in the optimal transport framework we define the mass transport matrix $M^{j}$ where element $M^{j}_{k\ell}$ describes the number of particles transitioning from state $k$ at time $j$ to state $\ell$ at time $j+1$.

Given the distribution $\mu_j$, and transition probability matrix $A^j$, a large deviation argument similar to the continuous setting can be used to approximate the most likely transfer matrix $M^j$ by the minimizer of the rate function $H(\, \cdot \, | \, \diag(\mu_j) A^j)$.
Assuming that the initial and final marginal, $\mu_1$ and $\mu_J$, are given, these large deviation theoretic considerations motivate the formulation of an optimization problem to find the most likely mass transfer matrices $M^j$, for $j=1,\dots,J-1$, and intermediate distributions $\mu_j$, for $j=2,\dots,J-1$, as
\begin{equation} \label{eq:opt_markov_chain}
\begin{aligned}
\minwrt[M_{[1:J-1]}, \mu_{[2:J-1]}] \ & \sum_{j=1}^{J-1} H( M^j\,|\,\diag(\mu_j)A^j) \\
\text{subject to } \  &  M^j \mathbf{1} = \mu_j  , \; \;  (M^j)^T \mathbf{1} = \mu_{j+1}  , \; \text{ for } \  j=1,\dots,J-1.
\end{aligned}
\end{equation}
Indeed, this problem is equivalent to the discrete time and space Schr\"odinger bridge problem in \cite{pavon2010discrete} (for details see \cite{haasler19ensemble} or Section~ \ref{sec:ex_bridge}).

An extension to a Hidden Markov Model formulation with aggregate indirect observations of the distributions was described in \cite{haasler19ensemble}. In this article the framework is extended to general tree structures, where each vertex is associated with a distribution, and each edge with a Markov process.

\section{ Tree-structured multi-marginal optimal transport} \label{sec:omt_tree}

In this section we introduce tree-structured multi-marginal optimal transport problems. For the entropy regularized version of these problems, the projections \eqref{eq:proj_discrete} can be computed by only matrix-vector multiplications. This yields an efficient Sinkhorn algorithm, which we present in this section. Moreover, some properties of tree-structured multi-marginal optimal transport problems are discussed.
\begin{definition}
	A graph $\ccT=(\ccV,\ccE)$, with vertices $\ccV$ and edges $\ccE$, is a tree if it is acyclic and connected \cite{diestel10graph}.
	The vertices with degree 1 are called leaves, and we denote the set of leaves by $\ccL$.
	For a vertex $j\in\ccV$, the set of neighbours $\ccN_j$ is defined as the set of vertices, which have a common edge with $j$.
	The path between two vertices $j_1$ and $j_L$ is a sequence of edges that connect $j_1$ and $j_L$, such that all edges are distinct. We also denote a path by the set of intermediate vertices.
	\end{definition}

Let $\ccT=(\ccV, \ccE)$ be a tree. In this section we consider a multi-marginal optimal transport problem where the marginals correspond to the nodes of the tree, i.e., $\ccV = \{1,2,\dots,J\}$. Assume that the cost tensor decouples as
\begin{equation} \label{eq:cost_tensor_tree}
\bC_{i_{1},\dots,i_{J}} = \sum_{(j_1,j_2)\in \ccE} C^{(j_1,j_2)}_{i_{j_1},i_{j_2}},
\end{equation}
where $C^{(j_1,j_2)}\in \mR^{n\times n}_+$ is a cost matrix characterizing the cost of transportation between marginal $j_1$ and $j_2$, for all $(j_1,j_2)\in\ccE$.
Since we consider an undirected tree, the expression \eqref{eq:cost_tensor_tree} should not depend on the ordering of the indices $(j_1,j_2)$ in the edges. This is achieved by simply letting $C^{(j_2,j_1)}= \left(C^{(j_1,j_2)}\right)^T$ for $(j_1,j_2) \in \ccE$.
 We refer to problem \eqref{eq:omt_multi_discrete} with a cost of the form \eqref{eq:cost_tensor_tree} as a multi-marginal optimal transport problem on the tree $\ccT$.

In the following, we consider discretized and entropy regularized multi-marginal optimal transport with tree structure. The transport plan can then be expressed as $\bM = \bK \odot \bU$, where $\bK=\exp(-\bC/\epsilon)$ (cf. \eqref{eq:MKU}). Due to the structured cost \eqref{eq:cost_tensor_tree}, this tensor decouples as
\begin{equation}\label{eq:Kmultimarginal}
\bK_{i_{1},\dots,i_{J}} = \prod_{(j_1,j_2)\in \ccE} K^{(j_1,j_2)}_{i_{j_1},i_{j_2}},
\end{equation}
with the matrices defined as $K^{(j_1,j_2)}_{i_{j_1},i_{j_2}}= \exp( -C^{(j_1,j_2)}_{i_{j_1},i_{j_2}}/ \epsilon)$, for $(j_1,j_2)\in\ccE$.
Thus, it holds $K^{(j_2,j_1)}= \left(K^{(j_1,j_2)}\right)^T$ for $(j_1,j_2)\in \ccE$.
We show that for these problems the projections \eqref{eq:proj_discrete} can therefore be computed by successive matrix vector multiplications. As the computation of the projections account for the bottleneck of the Sinkhorn iterations \eqref{eq:sinkhorn_multi}, this yields an efficient algorithm for solving entropy regularized multi-marginal optimal transport problems with tree structure.

We first illustrate the computation of the projections on a small example.
%
%
	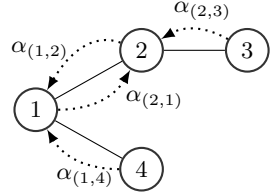
\begin{wrapfigure}{r}{0.28\textwidth}
		\small 
		\tikzstyle{level 1}=[level distance=40pt, sibling distance=45pt]
		\tikzstyle{level 2}=[level distance=40pt, sibling distance=45pt]
		\centering
		\begin{tikzpicture}[grow=right]
		\tikzstyle{main}=[circle, minimum size = 5mm, thick, draw =black!80]	
		\node[main] (1) {$1$}
		child{ node[main] (4) {$4$}}
		child{ node[main] (2) {$2$}
			child{ node[main] (3) {$3$}  }  };
		\draw [->, -latex, dotted, thick] (2) to [out=160, in = 60] node[ left]{$ \alpha_{(1,2)}$} (1);
		\draw [->, -latex, dotted, thick ] (1) to [out=0, in = 240] node[ right]{$\ \ \alpha_{(2,1)}$} (2);
		\draw [->, -latex, dotted, thick ] (3) to [out=150, in = 30] node[ above]{$\ \ \alpha_{(2,3)}$} (2);
		\draw [->, -latex, dotted, thick ] (4) to [out=180, in = 300] node[ below]{$\ \ \alpha_{(1,4)}$} (1);
		\end{tikzpicture}
		\vspace{-5pt}
		\caption{Illustration of the tree in Example \ref{ex:tree_multi_omt}.}  \vspace{-10pt}
		\label{fig:small_tree}
	\end{wrapfigure}
%
\begin{example} \label{ex:tree_multi_omt}
	Consider the tree $\ccT=(\ccV,\ccE)$ with vertices $\ccV=\{1,2,3,4\}$ and edges $\ccE=\{(1,2),(2,3),(1,4)\}$ as depicted in Figure~\ref{fig:small_tree}.
Assume that the cost in problem \eqref{eq:omt_multi_regularized} decouples as
$\bC_{i_1,i_2,i_3,i_4} = C^{(1,2)}_{i_1,i_2} + C^{(2,3)}_{i_2,i_3} + C^{(1,4)}_{i_1,i_4}$, where $C^{(j_1,j_2)}$, for $(j_1,j_2)\in\ccE$, are cost matrices defined on the respective edges. The transport tensor is then of the form $ \bM = \bK \odot \bU$, where $\bK=\exp(-\bC/\epsilon)$ and $\bU=u_1\otimes u_2 \otimes u_3 \otimes u_4$. Thus, denoting $K^{(j_1,j_2)}= \exp(-C^{(j_1,j_2)}/\epsilon)$ it holds that $\bK_{i_1,i_2,i_3,i_4} =  K^{(1,2)}_{i_1,i_2} K^{(2,3)}_{i_2,i_3} K^{(1,4)}_{i_1,i_4}$. Hence, the projection on the first marginal of $\bK \odot \bU$ can be computed as
 \begin{equation}
P_1(\bK \odot \bU)_{i_1} \!\! = \!\!\!\!  \sum_{i_2,i_3,i_4} \!\! \!  \bK_{i_1,i_2,i_3,i_4} \bU_{i_1,i_2,i_3,i_4} \!\! = \! (u_1)_{i_1} \!\!\!\! \sum_{i_2,i_3,i_4} \!\! \!\! K^{(1,2)}_{i_1,i_2} K^{(2,3)}_{i_2,i_3} K^{(1,4)}_{i_1,i_4}  (u_2)_{i_2} (u_3)_{i_3} (u_4)_{i_4}.
\end{equation}
The sum can be computed as successive matrix-vector multiplications, starting from the leaves of the tree. 
In particular, denote the products starting in the two leaves
\begin{equation}
	\sum_{i_3}\! K^{(2,3)}_{i_2,i_3} \! (u_3)_{i_3} \!=\! \left( K^{(2,3)} u_3 \right)_{i_2} \!\!\!\!=: \!(\alpha_{(2,3)})_{i_2}\!,\ \ \ \ \sum_{i_4}\! K^{(1,4)}_{i_1,i_4} \! (u_4)_{i_4} \!=\! \left( K^{(1,4)} u_4 \right)_{i_1} \!\!\!\!=: \!\left(\alpha_{(1,4)}\right)_{i_1}\! \!.
\end{equation}
On the lower branch of the tree, we have thereby brought the expression to a vector indexed by $i_1$. On the upper branch, another multiplication, corresponding to the edge $(1,2)$, is required. This leads to defining
\begin{equation}
  \sum_{i_2}  K^{(1,2)}_{i_1,i_2} \left( u_2 \odot \alpha_{(2,3)}\right)_{i_2}
  =  \left( K^{(1,2)} ( u_2 \odot \alpha_{(2,3)} ) \right)_{i_1}
  =:\left(\alpha_{(1,2)}\right)_{i_1}.
\end{equation}
The full expression for the projection thus reads
\begin{equation}
P_1(\bK \odot \bU) = u_1 \odot K^{(1,2)}  ( u_2 \odot K^{(2,3)} u_3 ) \odot K^{(1,4)} u_4 
= u_1 \odot \alpha_{(1,2)} \odot \alpha_{(1,4)}.
\end{equation}
This thus allows for decomposing the computation of $P_1(\bK \odot \bU)$
into the three parts $u_1,  \alpha_{(1,2)}, \alpha_{(1,4)}$,
which represents the contributions from node 1; from nodes 2 and 3; and from node 4, respectively. 
Analogously, the projection on the second marginal is computed as 
$P_2(\bK \odot \bU) = u_2 \odot \alpha_{(2,3)} \odot \alpha_{(2,1)}$, where $ \alpha_{(2,1)} := K^{(2,1)} \left(u_1 \odot \alpha_{(1,4)} \right)$.
In particular, note here that 
the tuple $(j,k)$ in $\alpha_{(j,k)}$ is ordered. For instance, it holds $ \alpha_{(1,2)} \neq \alpha_{(2,1)}$.
The other marginals can be computed similarly by performing the corresponding matrix vector products, starting from the leaves of the tree.
\end{example}
Illustratively, the procedure in Example \ref{ex:tree_multi_omt} for projecting the information of the full tensor down to one marginal can be understood as passing down the information from all branches of the tree to the desired vertex, where we interpret the vectors $u_j$ as local information in each node. Starting from the leaves of the tree, this is done according to the following rules:
\begin{enumerate}
	\item Information is passed down the edge $(j_1,j_2)\in\ccE$ by multiplication with the matrix $K^{(j_1,j_2)}$.
	\item Information is collected in the node $j\in\ccV$ by elementwise multiplication of the vector $u_j$, which contains local information, with information passed down from the connected branches.
\end{enumerate}
In this interpretation the vector $\alpha_{(j,k)}$ can be understood as the collected information that is sent from node $k$ to node $j$.
The following theorem shows that these rules
hold for any tensor of the form $\bK\odot\bU$, with $\bU=u_1\otimes \dots \otimes u_j$ and $\bK$ decoupling according to an arbitrary tree.
\begin{theorem} \label{thm:multi_omt_tree}
	Let  $\bK=\exp(-\bC /\epsilon)$ with $\bC$ as in  \eqref{eq:cost_tensor_tree} for the tree $\ccT=(\ccV,\ccE)$, and let $\bU= u_1 \otimes u_2 \otimes \dots \otimes u_J$. Define $K^{(j_1,j_2)}= \exp(-C^{(j_1,j_2)}/\epsilon)$, for $(j_1,j_2)\in\ccE$. Then the projection on the $j$-th marginal of $\bK \odot \bU$ is of the form
	\begin{equation}
	P_j(\bK \odot \bU) =  u_j \odot \bigodot_{k\in \ccN_j} \alpha_{(j,k)}.
	\end{equation}
	The vectors $\alpha_{(j,k)}$, for all ordered tuples $(j,k)\in \ccE$, can be computed recursively starting in the leaves of the tree according to
\begin{equation}	\label{eq:alpha}
	\begin{aligned} 
	\alpha_{(j,k)} &= K^{(j,k)} u_k, &\mbox{ for } k\in \ccL,\\
	\alpha_{(j,k)} &= K^{(j,k)} \Big( u_k \odot \bigodot_{ \ell \in \ccN_k \setminus \{j\} } \alpha_{(k,\ell)} \Big), &\mbox{ for } k\notin \ccL.
	\end{aligned}
\end{equation}
\end{theorem}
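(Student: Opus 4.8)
The plan is to prove the claim by induction on the tree structure, essentially formalizing the "information passing" intuition described just before the theorem. The key observation is that a tree has a natural recursive decomposition: removing a vertex $j$ from $\ccT$ splits the tree into $|\ccN_j|$ disjoint subtrees, one attached to each neighbour $k\in\ccN_j$. Because the cost decouples additively over edges as in \eqref{eq:cost_tensor_tree}, the kernel $\bK$ factors multiplicatively over edges as in \eqref{eq:Kmultimarginal}, and each edge belongs to exactly one of these subtrees. This means the big sum defining $P_j(\bK\odot\bU)$ factors cleanly: after pulling out $u_j$, the summation over all indices $i_\ell$ with $\ell\neq j$ separates into a product of independent sums, one per subtree hanging off each neighbour $k$, since no kernel factor $K^{(j_1,j_2)}$ couples indices living in two different subtrees.

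First I would set up notation for the subtree rooted at $k$ when the edge $(j,k)$ is cut — call it $\ccT_{(j,k)}$ with vertex set $\ccV_{(j,k)}$ — and define $(\alpha_{(j,k)})_{i_j}$ precisely as the partial sum over all indices in $\ccV_{(j,k)}$ of the product of the kernel factor $K^{(j,k)}_{i_j,i_k}$ together with all internal kernel factors of $\ccT_{(j,k)}$ and all the local vectors $u_\ell$ for $\ell\in\ccV_{(j,k)}$. With this definition, the factorization of the projection into $u_j\odot\bigodot_{k\in\ccN_j}\alpha_{(j,k)}$ is immediate from the disjointness of the subtrees and the distributivity of multiplication over the summation. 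The main content then reduces to verifying that these $\alpha_{(j,k)}$ satisfy the recursion \eqref{eq:alpha}.

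The heart of the argument is the inductive step for the recursion, which I would prove by induction on the size (number of vertices) of the subtree $\ccT_{(j,k)}$. For the base case, if $k$ is a leaf, then $\ccT_{(j,k)}$ is the single vertex $k$, so the partial sum is just $\sum_{i_k}K^{(j,k)}_{i_j,i_k}(u_k)_{i_k}=(K^{(j,k)}u_k)_{i_j}$, matching the first line of \eqref{eq:alpha}. For the inductive step, when $k$ is not a leaf, I would observe that removing the edge $(j,k)$ and then examining $k$'s own neighbours other than $j$, namely $\ell\in\ccN_k\setminus\{j\}$, further decomposes $\ccT_{(j,k)}$ into the vertex $k$ plus the smaller subtrees $\ccT_{(k,\ell)}$. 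Again by disjointness the internal sum over $\ccV_{(j,k)}\setminus\{k\}$ factors as $\bigodot_{\ell\in\ccN_k\setminus\{j\}}\alpha_{(k,\ell)}$ evaluated at $i_k$, each $\alpha_{(k,\ell)}$ being the partial sum for a strictly smaller subtree to which the induction hypothesis applies; multiplying by $(u_k)_{i_k}$ and summing against $K^{(j,k)}_{i_j,i_k}$ gives exactly the second line of \eqref{eq:alpha}.

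The step I expect to require the most care is making the index bookkeeping rigorous: precisely which summation indices belong to which subtree, and arguing that every kernel factor $K^{(j_1,j_2)}$ in the product \eqref{eq:Kmultimarginal} has both its indices lying within a single subtree (or is the cut edge itself), so that the grand sum genuinely factors as a product of independent sums. This hinges on the tree being acyclic and connected, which guarantees that the subtrees $\ccT_{(j,k)}$ are vertex-disjoint and cover $\ccV\setminus\{j\}$, with exactly one edge crossing from $j$ into each. Once this combinatorial fact is stated cleanly, the algebraic manipulations are routine distributivity, and the rest of the proof is essentially the two-line induction above. I would therefore front-load the effort into a clean lemma or paragraph establishing the subtree partition, after which the factorization and the recursion follow with minimal additional work.
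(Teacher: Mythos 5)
Your proposal is correct and follows essentially the same route as the paper's proof: the paper also roots the tree at $j$, defines each $\alpha_{(j,k)}$ as the partial sum over the subtree hanging off neighbour $k$, and verifies the recursion by splitting off the leaf case and decomposing the non-leaf case over $\ccN_k\setminus\{j\}$ (the only cosmetic difference being that the paper routes the final step through a lemma on the inner product $\langle\bK,\bU\rangle=w^Tu_j$ rather than factoring the projection sum directly).
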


\begin{proof}
See the Appendix.
\end{proof}

Analogously to the marginal projections \eqref{eq:proj_discrete}, let $P_{j_1,j_2}(\bM)$ denote the bi-marginal projections on marginals $j_1$ and $j_2$ as $ P_{j_1,j_2}(\bM)_{i_{j_1}, i_{j_2}} = \sum_{ i_1,\dots,i_J \setminus \{i_{j_1}, i_{j_2}\} } \bM_{i_1,\dots,i_J}$.
These pairwise projections can be expressed similarly to the marginal projections in Theorem~\ref{thm:multi_omt_tree}.
\begin{proposition} \label{prp:multi_omt_tree_pairwise}
	Let the assumptions in Theorem~\ref{thm:multi_omt_tree} hold, let $j_1,j_L \in\ccV$, and let $j_1,j_2,\dots,j_L$ denote the path between $j_1$ and $j_L$.
		Then the pairwise projection of $\bK \odot \bU$ on the marginals $j_1$ and $j_L$, denoted by $P_{j_1,j_L}(\bK\odot \bU) $, is
	\begin{equation} \label{eq:proj_j}
 \Bigg( \prod_{\ell=1}^{L-1}  \diag\bigg(u_{j_\ell} \odot \!\!\!  \!\!\! \bigodot_{ \substack{k\in \ccN_{j_\ell} \\ k \notin \{j_1,\dots,j_L\}} } \!\!\! \!\!\! \alpha_{(j_\ell,k)}  \bigg)  K^{(j_{\ell},j_\ell+1)} \Bigg)  \diag\bigg( u_{j_{L}} \odot \bigodot_{k \in \ccN_{j_L} \setminus \{j_{L-1}\}} \alpha_{(j_L,k)} \bigg),
	\end{equation}
	where $\alpha_{(j_1,j_2)}$, for $(j_1,j_2)\in\ccE$, are defined as in Theorem~\ref{thm:multi_omt_tree}
\end{proposition}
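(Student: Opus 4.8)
The plan is to exploit the fact that fixing the path $j_1,j_2,\dots,j_L$ between the two endpoints induces a decomposition of the whole tree, and then to recognize the contraction over each piece either as one of the message vectors $\alpha_{(\cdot,\cdot)}$ from Theorem~\ref{thm:multi_omt_tree} or as a chain matrix product along the path. First I would record the structural observation underlying everything: since $\ccT$ is acyclic, deleting the $L-1$ path edges $(j_\ell,j_{\ell+1})$ disconnects $\ccT$ into exactly $L$ components, the $\ell$-th containing the single path vertex $j_\ell$. Within the $\ell$-th component, every off-path neighbor $k\in\ccN_{j_\ell}$ with $k\notin\{j_1,\dots,j_L\}$ is the root of a subtree $S_{(j_\ell,k)}$ lying entirely off the path (otherwise a second $j_\ell$--$j_{\ell'}$ path would create a cycle), and these subtrees, together with the path vertices themselves, partition $\ccV$. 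Correspondingly, the edge set $\ccE$ partitions into the path edges and, for each such $(j_\ell,k)$, the edge $(j_\ell,k)$ together with all edges internal to $S_{(j_\ell,k)}$.

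Using this partition in the factorizations \eqref{eq:cost_tensor_tree}--\eqref{eq:Kmultimarginal}, I would write the summand $(\bK\odot\bU)_{i_1,\dots,i_J}$ of the pairwise projection as a product of three groups of factors: the path factors $\prod_{\ell=1}^{L-1}K^{(j_\ell,j_{\ell+1})}_{i_{j_\ell},i_{j_{\ell+1}}}$, the path-vertex weights $\prod_{\ell=1}^{L}(u_{j_\ell})_{i_{j_\ell}}$, and, for every off-path neighbor $k$ of every $j_\ell$, the block $K^{(j_\ell,k)}_{i_{j_\ell},i_k}(u_k)_{i_k}\prod_{(a,b)\in S_{(j_\ell,k)}}K^{(a,b)}_{i_a,i_b}\prod_{v\in S_{(j_\ell,k)}}(u_v)_{i_v}$ (here $S_{(j_\ell,k)}$ denotes the subtree, with both its vertices and its edges). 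Because the index sets of distinct blocks are disjoint and disjoint from the path indices, the summation over all non-path indices factors across blocks. Summing a single block over the indices of $S_{(j_\ell,k)}$ reproduces exactly the recursion \eqref{eq:alpha} and hence yields $(\alpha_{(j_\ell,k)})_{i_{j_\ell}}$; this is the same contraction that proves Theorem~\ref{thm:multi_omt_tree}, established by induction on the depth of the subtree. After this step the projection has collapsed to a tensor in the $L$ path indices, namely $\prod_{\ell=1}^{L-1}K^{(j_\ell,j_{\ell+1})}_{i_{j_\ell},i_{j_{\ell+1}}}\prod_{\ell=1}^{L}(w_{j_\ell})_{i_{j_\ell}}$, where I abbreviate $w_{j_\ell}:=u_{j_\ell}\odot\bigodot_{k\in\ccN_{j_\ell},\,k\notin\{j_1,\dots,j_L\}}\alpha_{(j_\ell,k)}$ (for $\ell=L$ this is exactly the vector in the last $\diag$ of \eqref{eq:proj_j}, since $j_{L-1}$ is the only path-neighbor of $j_L$).

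It remains to sum out the intermediate path indices $i_{j_2},\dots,i_{j_{L-1}}$ while keeping $i_{j_1}$ and $i_{j_L}$ free. This is a one-dimensional chain contraction, and reading the entry $(i_{j_1},i_{j_L})$ of the product $\big(\prod_{\ell=1}^{L-1}\diag(w_{j_\ell})K^{(j_\ell,j_{\ell+1})}\big)\diag(w_{j_L})$ gives precisely $\sum_{i_{j_2},\dots,i_{j_{L-1}}}\prod_{\ell=1}^{L}(w_{j_\ell})_{i_{j_\ell}}\prod_{\ell=1}^{L-1}K^{(j_\ell,j_{\ell+1})}_{i_{j_\ell},i_{j_{\ell+1}}}$, which matches the collapsed tensor and is the asserted formula \eqref{eq:proj_j}. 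The orientation convention causes no trouble because $K^{(j_\ell,j_{\ell+1})}_{i_{j_\ell},i_{j_{\ell+1}}}=K^{(j_{\ell+1},j_\ell)}_{i_{j_{\ell+1}},i_{j_\ell}}$, so each path edge contributes the correct matrix (rather than its transpose) in the left-to-right product. The main obstacle is the bookkeeping in the second paragraph: rigorously justifying that the tree decomposition makes the large sum factor into independent blocks, and that each off-path block contracts to $\alpha_{(j_\ell,k)}$ exactly as in \eqref{eq:alpha}; once the partition of $\ccV$ and $\ccE$ is pinned down, the remaining manipulations are the routine factorization of a sum of products and the definition of matrix multiplication.
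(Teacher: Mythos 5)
Your proof is correct and follows essentially the same route as the paper's: both decompose the tree relative to the path $j_1,\dots,j_L$, contract each off-path subtree into the corresponding message $\alpha_{(j_\ell,k)}$ via the recursion \eqref{eq:alpha}, and recognize the remaining contraction over the intermediate path indices as the chain of matrix products in \eqref{eq:proj_j}. The only (cosmetic) difference is that you sum out the non-path indices directly from the definition of $P_{j_1,j_L}$, whereas the paper first rewrites $\langle \bK,\bU\rangle$ in the bilinear form $w^T\diag(u_{j_1})W\diag(u_{j_L})\hat w$ and then invokes Lemma~\ref{lem:proj}; the combinatorial content is identical.
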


\begin{proof}
See the supplementary material.
\end{proof}

Multi-marginal optimal transport problems on some specific trees have previously been introduced in \cite{elvander19multi}. It is worth noting that the expressions for the projections of the mass tensor $\bM$ for the examples in \cite{elvander19multi} are special cases of the results in Theorem~\ref{thm:multi_omt_tree} and Proposition~\ref{prp:multi_omt_tree_pairwise}.
For instance, in tracking problems each marginal in the optimal transport problem is associated with a time instance, and the corresponding graph is a path graph, as in \cite[Section 3.1 and 5.1]{elvander19multi}. Sensor fusion applications can be cast as barycenter problems, which are described by star-shaped graphs, as in \cite[Section 3.2 and 5.2]{elvander19multi}. Moreover, a combination of these two applications is the tracking of barycenters over time, which is described by a graph as in Figure~\ref{fig:HMM_tree_model}, see \cite[Section 3.3 and 5.3]{elvander19multi}.

The expressions for the projections in Theorem~\ref{thm:multi_omt_tree} can be used to solve a multi-marginal optimal transport problem, with cost structure according to the tree $\ccT=(\ccV,\ccE)$, by a Sinkhorn method as in \eqref{eq:sinkhorn_multi}.
To this end, without loss of generality, we consider tree-structured multi-marginal optimal transport problems \eqref{eq:omt_multi_discrete}, where the constraints are given on the set of leaves, i.e., $\Gamma=\ccL$. The following proposition shows that if a marginal is given for a non-leaf node, then the problem can be decomposed into smaller problems on subtrees, where the marginals are known exactly on the set of leaves of the subtrees.
\begin{proposition} \label{prp:GammaL}
	Consider problem \eqref{eq:omt_multi_regularized} with constraint set $\Gamma$ and cost structured according to the tree $\ccT$ with leaves $\ccL$. The following holds:
	\begin{enumerate}
		\item If there is a marginal $k \in \Gamma$, but $k \notin \ccL$, then the solution to \eqref{eq:omt_multi_regularized} can be found by solving problems of form \eqref{eq:omt_multi_regularized} on the subtrees of $\ccT$, which are obtained by cutting $\ccT$ in node $k$.
		\item If there is a marginal $\ell \in \ccL$, but $\ell \notin \Gamma$, then the solution to \eqref{eq:omt_multi_regularized} can be found by solving the problem on the subtree of $\ccT$ that is obtained by removing the node $\ell$ and its adjoining edge from $\ccT$.
	\end{enumerate}
\end{proposition}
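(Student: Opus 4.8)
The plan is to prove both statements by exploiting the Markov factorization of the optimizer $\bM=\bK\odot\bU$, in which $\bK$ decouples over the edges of $\ccT$ (cf.\ \eqref{eq:MKU}--\eqref{eq:Kmultimarginal} and \eqref{eq:U}), together with the observation that the transport cost is additive over edges while the entropy behaves predictably under marginalization and conditioning. For a subtree $\ccT_s\subseteq\ccT$ I write $\bM^s:=P_{\ccV(\ccT_s)}(\bM)$ for the projection of $\bM$ onto the variables indexed by $\ccV(\ccT_s)$, and $\bC^s$ for the cost restricted to the edges of $\ccT_s$. Since every edge of $\ccT$ lies in exactly one of the relevant subtrees and $\langle C^{(j_1,j_2)},P_{j_1,j_2}(\bM)\rangle$ depends only on the pair $(j_1,j_2)$, the cost always splits exactly as $\langle\bC,\bM\rangle=\sum_s\langle\bC^s,\bM^s\rangle$ for any feasible $\bM$; the real work is in controlling the entropy term.

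For the first statement, let $k\in\Gamma\setminus\ccL$ have neighbours $\ccN_k=\{k_1,\dots,k_m\}$, and let $\ccT_1,\dots,\ccT_m$ be the subtrees obtained by cutting $\ccT$ at $k$, each carrying a copy of $k$ as a leaf. First I would note that, because $\bK$ factors over edges and $\bU=u_1\otimes\cdots\otimes u_J$, conditioning on $i_k$ makes the optimal $\bM$ factor into a product over the subtrees; this is the junction-tree identity $\log\bM=\sum_s\log\bM^s-(m-1)\log\mu_k$ on the support, with $k$ as separator. Substituting this into Definition~\ref{def:kl} and tracking the linear $-p+q$ terms (which contribute only constants, since all $\bM^s$ and $\mu_k$ share the total mass $\ett^T\mu_k$) gives $H(\bM)=\sum_s H(\bM^s)-(m-1)H(\mu_k)+\mathrm{const}$ for product-form $\bM$, and the relation with ``$\ge$'' for arbitrary feasible $\bM$ by subadditivity of conditional entropy, with equality exactly in the product case. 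Because $k\in\Gamma$ fixes $\mu_k$, the term $(m-1)\epsilon H(\mu_k)$ is constant, so the objective is bounded below by $\sum_s\big(\langle\bC^s,\bM^s\rangle+\epsilon H(\bM^s)\big)$ up to a constant; each summand is precisely the objective of \eqref{eq:omt_multi_regularized} on $\ccT_s$ with constraints $\Gamma_s=(\Gamma\cap\ccV(\ccT_s))\cup\{k\}$ and $\mu_k$ imposed at the new leaf $k$. Minimizing each subproblem independently and reassembling by the conditional-independence product attains the bound, which proves the decomposition and that the reconstructed tensor is the global optimizer.

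For the second statement, let $\ell\in\ccL\setminus\Gamma$ with unique neighbour $p$, and $\tilde\ccT=\ccT\setminus\{\ell\}$. The idea is variable elimination: fix $\tilde\bM=P_{\ccV\setminus\{\ell\}}(\bM)$ and minimize over the remaining freedom in the $i_\ell$-block, which is unconstrained since $\ell\notin\Gamma$. For each fixed value of the other indices this inner problem is entropy-regularized with only the mass constraint $\sum_{i_\ell}\bM=\tilde\bM$, so its closed-form minimizer is $\bM(\,\cdot\mid i_p)\propto K^{(p,\ell)}_{i_p,\cdot}$, and plugging it back produces the message $\beta_p:=K^{(p,\ell)}\ett$, which equals $\alpha_{(p,\ell)}$ from \eqref{eq:alpha} since $u_\ell=\ett$. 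Carrying out the substitution and again collecting the normalized-KL constants reduces the objective to $\langle\tilde\bC,\tilde\bM\rangle+\epsilon H(\tilde\bM)-\epsilon\langle\log\beta_p,P_p(\tilde\bM)\rangle+\mathrm{const}$, i.e.\ to a problem of the form \eqref{eq:omt_multi_regularized} on $\tilde\ccT$ with the induced node potential $-\epsilon\log\beta_p$ at $p$ folded into any incident edge cost. Solving this reduced problem and appending the closed-form conditional $K^{(p,\ell)}_{i_p,i_\ell}/\beta_p(i_p)$ then reconstructs the full optimizer, consistent with $u_\ell=\ett$ in \eqref{eq:U}.

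The main obstacle in both parts is the entropy bookkeeping. For the first statement one must verify the junction-tree identity with the normalized KL of Definition~\ref{def:kl} rather than the plain Shannon entropy, check that the correct inequality direction survives for non-product feasible points via subadditivity of conditional entropy, and confirm that the equal-mass constraints make every stray $-p+q$ term collapse to a constant. For the second, the subtle point is that eliminating an unconstrained leaf is not entirely free: it deposits the potential $\beta_p$ at the neighbour, and one must confirm this keeps the reduced problem inside the class \eqref{eq:omt_multi_regularized}. I would emphasize that $\beta_p$ is constant, and the potential therefore immaterial, precisely when the rows of $K^{(p,\ell)}$ have equal sums, e.g.\ for translation-invariant costs on a homogeneous grid, which is the regime of the intended applications.
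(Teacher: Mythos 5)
Your argument is correct, but it follows a genuinely different route from the paper's. For part 1 the paper works directly with the known form $\bM=\bK\odot\bU$ of the optimizer from \eqref{eq:MKU} and verifies the identity $P_{jk\ell}(\bM)_{i_j,i_k,i_\ell}\,P_k(\bM)_{i_k}=P_{jk}(\bM)_{i_j,i_k}\,P_{k\ell}(\bM)_{i_k,i_\ell}$, i.e.\ that the optimizer is conditionally rank one across the separator $k$; since $\mu_k$ is prescribed, the subtree marginals then cannot influence each other. You instead argue on the level of objective values: the cost splits exactly over the subtrees, the (negative) entropy obeys the junction-tree inequality with equality for the conditional-product gluing, and the separator term $(m-1)H(\mu_k)$ is fixed by the constraint. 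Your version is more self-contained (it does not presuppose the form of the optimizer) at the price of the entropy bookkeeping you describe; the paper's is shorter because it reuses \eqref{eq:MKU}. For part 2 the paper simply observes that $u_\ell=\ett$ forces $\alpha_{(k,\ell)}=K^{(k,\ell)}\ett$ in \eqref{eq:alpha} to be fixed across iterations, so the leaf can be dropped from Algorithm~\ref{alg:sinkhorn}. Your variable-elimination computation reaches the same conclusion but makes explicit something the paper leaves implicit: eliminating the unconstrained leaf deposits the potential $-\epsilon\log\bigl(K^{(p,\ell)}\ett\bigr)$ at the neighbour, which must be absorbed into an adjacent edge cost for the reduced problem to remain literally of the form \eqref{eq:omt_multi_regularized}; this is a worthwhile clarification of what ``the problem on the subtree'' means, and your closing caveat about constant row sums is then only a remark about when the potential vanishes, not a restriction on the result. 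One small loose end in your part 1: the gluing step needs a convention on the support where $\mu_k$ vanishes, and the degenerate case in part 2 where the reduced tree has a single node (no edge to absorb the potential into) should be mentioned, but neither affects the substance.
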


\begin{proof}
See the Appendix.
\end{proof}

Note that since $\Gamma=\ccL$, the factors $u_j$, for $j\in\ccV \setminus \ccL$, can be neglected in Theorem~\ref{thm:multi_omt_tree} and Proposition~\ref{prp:multi_omt_tree_pairwise}.
Moreover, when applying the iterative scheme \eqref{eq:sinkhorn_multi}, in each iteration step some of the factors $\alpha_{(j_1,j_2)}$ do not change. Specifically, between the update of $u_{j_1}$ and $u_{j_2}$ only the factors on all edges that lie on the path between nodes $j_1$ and $j_2$ need to be updated.
The full method for solving tree-structured multi-marginal optimal transport problems is summarized in Algorithm~\ref{alg:sinkhorn}. 
Moreover, the algorithm has in fact linear convergence speed.
\begin{algorithm}[tb]
\begin{algorithmic}
\STATE Given: Tree $\ccT=(\ccV,\ccE)$ with leaves $\ccL=\{1,2,\dots,|\ccL|\}$;\\
Initial guess $u_j$, for $j\in \ccL$ 
\FOR{ all ordered tuples $(j_1,j_2)\in\ccE$}
\STATE Initialize $\alpha_{(j_1,j_2)}$ according to \eqref{eq:alpha}
\ENDFOR
\STATE Initialize $j\in\ccL$
\WHILE{Sinkhorn not converged}
\STATE $u_{j} \leftarrow  \mu_j ./ \bigodot_{k\in \ccN_j} \alpha_{(j,k)}$
\FOR{ $(j_1,j_2)\in\ccE$ on the path from $j$ to $(j+1 \mod |\ccL|)$}
\STATE  Update $\alpha_{(j_1,j_2)}$ according to \eqref{eq:alpha}
\ENDFOR
\STATE $j \leftarrow j+1 \mod |\ccL|$
\ENDWHILE
\RETURN $u_j$ for $j\in\ccL$
\end{algorithmic}
\caption{Sinkhorn method for the tree-structured multi-marginal optimal transport problem.}\label{alg:sinkhorn} 
\end{algorithm} 
\begin{theorem} \label{thm:convergence}
	Let $\{ u^k_j \}_{j \in \ccL}$ be the set of vectors after the $k$th iteration in the while-loop in Algorithm~\ref{alg:sinkhorn}. Then the sequence $\{ u^k_j \}_{j \in \ccL}$ converges at least linearly to an optimal solution of \eqref{eq:multi_omt_dual}, as $k \to \infty$.
\end{theorem}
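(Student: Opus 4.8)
The plan is to recognize Algorithm~\ref{alg:sinkhorn} as a block coordinate ascent method applied to the dual problem \eqref{eq:multi_omt_dual}, and then invoke known linear convergence results for such schemes on this class of objectives. First I would note that, since $\Gamma = \ccL$, the variables of the dual \eqref{eq:multi_omt_dual} are exactly the vectors $\{\lambda_j\}_{j\in\ccL}$, equivalently the positive vectors $\{u_j\}_{j\in\ccL}$ via $u_j = \exp(\lambda_j/\epsilon)$. The key observation is that the $u$-update in the while-loop, $u_j \leftarrow \mu_j ./ \bigodot_{k\in\ccN_j}\alpha_{(j,k)}$, is precisely the Sinkhorn update \eqref{eq:sinkhorn_multi}: by Theorem~\ref{thm:multi_omt_tree} we have $P_j(\bK\odot\bU) = u_j \odot \bigodot_{k\in\ccN_j}\alpha_{(j,k)}$, so the update enforces $P_j(\bK\odot\bU) = \mu_j$, which is exactly the condition obtained by setting the partial derivative of the dual objective with respect to $\lambda_j$ to zero. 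Hence each iteration of the while-loop exactly maximizes \eqref{eq:multi_omt_dual} over the single block $\lambda_j$ while holding the others fixed; the inner for-loop merely refreshes those $\alpha_{(j_1,j_2)}$ on the path from $j$ to the next leaf that are affected by the change in $u_j$, so the $\alpha$-bookkeeping does not alter the iterates but only makes the block maximization computationally efficient.

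Having identified the algorithm as exact block coordinate ascent cycling through the leaves $j\in\ccL$ in the fixed order $1,2,\dots,|\ccL|$, I would next appeal to the convergence theory for dual coordinate ascent on entropy-regularized transport, as developed for the bi-marginal case in \cite{franklin1989} and in the dual-ascent framework of \cite{tseng1990dual}. The dual objective is smooth and strictly concave modulo the one-dimensional translation invariance noted in the excerpt (the solution $u_j$ is unique up to a global multiplicative/divisive constant), and the relevant sublevel sets are compact once one factors out this degeneracy. On the quotient space the objective is strongly concave in a neighborhood of the optimum, and the block maximizations are exact, so the standard argument gives a contraction in the Hilbert projective metric, exactly as in the two-marginal Sinkhorn analysis \cite{chen2016hilbert, franklin1989}. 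This yields linear convergence of $\{u_j^k\}_{j\in\ccL}$ to the optimizer of \eqref{eq:multi_omt_dual}.

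Concretely, I would carry out the following steps in order. Step one: write the dual \eqref{eq:multi_omt_dual} in the variables $u_j$, verify concavity, and confirm that stationarity in block $j$ is the marginal condition $P_j(\bK\odot\bU)=\mu_j$, so that the while-loop update is the exact block maximizer. Step two: apply Theorem~\ref{thm:multi_omt_tree} to express each projection through the $\alpha$-recursion \eqref{eq:alpha}, and check that after the inner for-loop all $\alpha_{(j_1,j_2)}$ needed for the next leaf's update are current, so that the algorithm genuinely realizes block coordinate ascent. Step three: establish a uniform contraction of the map sending one full cycle of updates to the next, either by casting the composite of leaf-updates as a positive nonlinear map that is a contraction in the Hilbert metric on the cone of positive vectors, or by invoking the local strong concavity of the dual together with the exactness of each block step. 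The main obstacle I anticipate is Step three: the bi-marginal contraction argument exploits that a single Sinkhorn sweep is a composition of two order-reversing positive maps, whereas here a full cycle composes $|\ccL|$ such maps interleaved with the tree-structured $\alpha$-propagation across paths of varying length. Controlling the contraction factor uniformly over the tree—rather than merely asserting asymptotic linear rate from local strong concavity—is the delicate part, and I would most likely settle for the local argument (smoothness plus nondegenerate Hessian on the quotient) to obtain the "at least linear" rate claimed in the statement.
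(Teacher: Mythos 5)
Your first step---recognizing the while-loop update as the exact block maximizer of the dual \eqref{eq:multi_omt_dual} over the single block $\lambda_j$, with the inner for-loop serving only as bookkeeping for the $\alpha$-vectors---is exactly the argument the paper makes. Where you diverge is in how the linear rate is obtained: the paper does not attempt any contraction or local-curvature analysis, but instead invokes the general convergence theory of Luo and Tseng \cite{luo1992convergence} (their Theorem~2.1 together with Application~5.3), which is tailored to coordinate descent on duals of problems with strictly convex, essentially smooth costs and linear constraints, and which delivers the at-least-linear rate directly. This citation is precisely the piece that closes the argument, and it is the piece your proposal leaves open.

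Concretely, your Step three has a genuine gap. The Hilbert-projective-metric contraction argument of \cite{franklin1989,chen2016hilbert} is specific to the two-marginal case, where one sweep is a composition of two order-reversing positive maps; it is not known to extend to a cycle over $|\ccL|$ leaves interleaved with tree-structured propagation, and you rightly flag this. Your fallback---local strong concavity on the quotient---is also not carried out: the degeneracy of the dual is not one-dimensional but $(|\ccL|-1)$-dimensional (one may rescale $u_j \to c_j u_j$ for any constants with $\prod_{j\in\ccL} c_j = 1$), and a local Hessian argument additionally presupposes that the iterates first converge into a neighborhood of an optimizer, which itself requires a separate global convergence argument (the paper gets this from \cite{BauLew00}). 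None of this is fatal to your strategy, but as written the rate claim rests on an argument you acknowledge you have not completed; replacing Step three with the Luo--Tseng result, as the paper does, is the clean way to finish.
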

\begin{proof}
Algorithm~\ref{alg:sinkhorn} implements the Sinkhorn iterations \eqref{eq:sinkhorn_multi}, which are a block coordinate ascent in the dual multi-marginal optimal transport problem \eqref{eq:multi_omt_dual} \cite{elvander19multi}. Thus, the result follows from Application 5.3 and Theorem 2.1 in \cite{luo1992convergence}.
\end{proof}

It should be noted that computing the marginals $P_j(\bK \odot \bU)$ by summing over the indices as defined in \eqref{eq:proj_discrete} scales exponentially in the number of marginals $J$ of the tensor $\bK \odot \bU $. Thus, the complexity of one Sinkhorn update in \eqref{eq:sinkhorn_multi} is in general of the order $\mathcal{O}(n^J)$. Algorithm~\ref{alg:sinkhorn} utilizes the result in Theorem~\ref{thm:multi_omt_tree} in order to exploit the graph-structure in the tensor $\bK \odot \bU$ to compute the projections. This decreases the computational complexity of the Sinkhorn iterations substantially.
In particular, if the direct path between node $j-1$ and node $j$ is of length $p$, then the update of the vector $u_j$ requires $p$ matrix-vector multiplications. The complexity of one update is thus of the order $\mathcal{O}(pn^2)$.
As a rule of thumb the leaves in the underlying tree $\ccT$ should thus be labeled such that the paths between any two nodes that are updated successively are short.
In case there are additional structures in the cost matrices $C^{(j_1,j_2)}$, for $(j_1,j_2)\in\ccE$, e.g., they describe the squared Euclidean distance, the cost of the matrix-vector multiplications can be decreased further, see, e.g., \cite[Rem.~4]{elvander19multi}, \cite[Rem.~3.11]{ringh2017sinkhorn}.

Note that Proposition~\ref{prp:GammaL}.1 implies that if the tensor $\bK$ is normalized to be a discrete probability distribution, it defines a Markov random field (see also \cite{haasler2020pgm}).
Every Markov process is also a reciprocal process \cite{jamison1974reciprocal}, which were first introduced in the context of studying Schr\"odinger bridges \cite{jamison75}. 
Recall that the maximum likelihood estimation problem for a Markov chain in Section~\ref{sec:schrodinger} is equivalent to a discrete Schr\"odinger bridge.
This suggests a connection between the multi-marginal optimal transport problem and the maximum likelihood estimation problem in Section~\ref{sec:schrodinger}.
In fact, under certain conditions a generalization of the Markov chain problem \eqref{eq:opt_markov_chain} to trees, which is introduced in Section~\ref{subsec:HMM_tree}, yields the same solution as the regularized multi-marginal optimal transport problem \eqref{eq:omt_multi_regularized} on the same tree, as we will see in Section~\ref{subsec:equivalence}.

\section{Multi-marginal optimal transport vs. Schr\"odinger bridge with tree-structure}
 \label{sec:HMM_tree}

In this section, we extend the Schr\"odinger bridge problem which is defined on a path tree to a formulation valid on an arbitrary tree graph.  
We then  show that under certain conditions this problem yields a solution which is equivalent to the solution of the entropy regularized multi-marginal optimal transport problem \eqref{eq:omt_multi_regularized} on the same tree, thus extending the well-known equivalence from the optimal transport problem and the Schr\"odinger bridge problem on a path tree \cite{chen2016relation, chen2016hilbert, leonard2013schrodinger, Mikami2004}.

\subsection{Tree-structured Schr\"odinger bridges}

\label{subsec:HMM_tree}

The notion of time of the Markov chain introduces a direction to the Schr\"odinger bridge problem, which can be seen directly in the optimization problem \eqref{eq:opt_markov_chain}. This directionality needs to be taken into account when extending the problem to a general tree. 
To this end, we introduce the following notation: Consider a tree $\ccT_r = (\ccV,\ccE_r)$ that is rooted in a vertex $r\in\ccL$, i.e., one of the leaves is defined to be 
the root of the tree. This defines a partial ordering on the tree, and we write $k<j$ 
if node $k$ lies on the path between $r$ and node $j$. 
To formulate the following results, we assume that all edges are directed according to this partial ordering, i.e., 
$j_1<j_2$ for all $(j_1,j_2)\in \ccE_r$.
For a vertex $j\in \ccV\setminus r$, its parent is then defined as the (unique) vertex $\parent(j)$ such that $(\parent(j),j)\in \ccE_r$. In the following, without loss of generality we denote the root vertex by $r=1$.

Let $A^{(j_1,j_2)}$ be the probability transition matrix on edge $(j_1,j_2)\in \ccE_r$. Then, the Schr\"odinger bridge problem in \eqref{eq:opt_markov_chain} may be naturally extended to the tree structure as
\begin{equation} \label{eq:HMM_tree}
\begin{aligned}
\minwrt[ \substack{M^{(j_1,j_2)}, \, (j_1,j_2) \in \ccE_r,\\ \mu_{j}, \,j \in \ccV \setminus \Gamma}] \ & \! \sum_{(j_1,j_2)\in \ccE_r} H \left( M^{(j_1,j_2)}\,|\,\diag(\mu_{j_1})A^{(j_1,j_2)} \right) \\
\text{subject to } \  &  M^{(j_1,j_2)} \mathbf{1} = \mu_{j_1}  , \ \  (M^{(j_1,j_2)})^T \mathbf{1} = \mu_{j_2}  ,\ \ \text{for } \  (j_1,j_2) \in \ccE_r.
\end{aligned}
\end{equation}
Moreover,
we can assume without loss of generality that $\Gamma=\ccL$, by similar reasoning as in Section~\ref{sec:omt_tree}.  
The extension of \eqref{eq:opt_markov_chain} to \eqref{eq:HMM_tree} builds on the large deviation principle in \cite[Prop. 1]{haasler19ensemble}, which requires that an initial distribution is known.
This is why we restrict our analysis to the case when the directed tree $\ccT_r(\ccV,\ccE_r)$ is rooted in a leaf.

Similarly to the multi-marginal optimal transport problem, the optimal solution to \eqref{eq:HMM_tree} can be expressed in terms of its dual variables.
In fact this is a natural generalization of the forward and backward propagation factors in the Schr\"odinger system  \cite{pavon2010discrete, Georgiou2015discreteSB} to tree graphs.

\begin{theorem} \label{thm:HMM_tree_sol}

Let $A^{(j_1,j_2)}$ be probability transition matrices, for $(j_1,j_2)\in \ccE_r$, with strictly positive elements and assume that $\ett^T \mu_{j_1}=\ett^T \mu_{j_2}$ for all leaves $j_1, j_2\in \ccL$.  Then an optimal solution to \eqref{eq:HMM_tree} can be written as
\begin{equation}
	\begin{aligned}
\mu_{j} =& \ \varphi_j \odot \hat\varphi_j,\\
M^{(j_1,j_2)} =& \  \diag\left( \hat \varphi_{j_1}  \odot \varphi_{j_1 \setminus j_2}  \right) A^{(j_1,j_2)} \diag \left(  \varphi_{j_2} \right),
\end{aligned}
\end{equation}
for a set of vectors $\varphi_j$, $\hat\varphi_j$, for $j \in \ccV$, and  $\varphi_{j_1 \setminus j_2}$ for $(j_1, j_2) \in \ccE_r$.
   
	Moreover, there exists
	a set of vectors $v_j$, for $j\in \Gamma$, such that the vectors $\varphi_j$, $\hat \varphi_j$ and $\varphi_{j_1 \setminus j_2}$ can be written as
	\begin{equation} \label{eq:phi_bw}
	\varphi_j = \begin{dcases}  v_j, & \text{ if } j\in\ccL\setminus\{1\} \\
	\bigodot_{k:(j,k) \in \ccE_r}  A^{(j,k)} \varphi_k ,& \text{ else,}
	\end{dcases}
	\end{equation}
	\begin{equation} \label{eq:phi_fw}
	\hat \varphi_j = \begin{dcases} \ett ./ v_{1},& \text{ if } j=1 \\
	(A^{(\parent(j),j)}) ^T ( \hat \varphi_{\parent(j)} \odot \varphi_{\parent(j) \setminus j}  ), & \text{ else,}  \end{dcases}
	\end{equation}
	and $\varphi_{j_1 \setminus j_2} = \varphi_{j_1} ./ \left( A^{(j_1,j_2)} \varphi_{j_2} \right)$.
\end{theorem}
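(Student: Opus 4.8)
The plan is to characterize the optimum through its Karush--Kuhn--Tucker (KKT) conditions and then to read off the asserted structure. First I would note that \eqref{eq:HMM_tree} is a jointly convex problem: each summand $H(M^{(j_1,j_2)}\,|\,\diag(\mu_{j_1})A^{(j_1,j_2)})$ is the composition of the jointly convex divergence $H(\,\cdot\,|\,\cdot\,)$ with the linear map $(M^{(j_1,j_2)},\mu_{j_1})\mapsto(M^{(j_1,j_2)},\diag(\mu_{j_1})A^{(j_1,j_2)})$, and all constraints are linear. Because the $A^{(j_1,j_2)}$ are strictly positive and the leaf masses agree, there is a strictly positive feasible point, so Slater's condition holds; the optimizer is interior, the nonnegativity constraints are inactive, and the KKT system is both necessary and sufficient for optimality. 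It therefore suffices to solve this system and exhibit its solution in the claimed form.

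Next I would attach multipliers $\lambda^{(j_1,j_2)}$ to the row-sum constraints $M^{(j_1,j_2)}\ett=\mu_{j_1}$ and $\nu^{(j_1,j_2)}$ to the column-sum constraints $(M^{(j_1,j_2)})^T\ett=\mu_{j_2}$, and form the Lagrangian. Setting the derivative in $M^{(j_1,j_2)}$ to zero and exponentiating yields the diagonal-scaling form $M^{(j_1,j_2)}=\diag\!\big(\mu_{j_1}\odot\exp(\lambda^{(j_1,j_2)})\big)A^{(j_1,j_2)}\diag\!\big(\exp(\nu^{(j_1,j_2)})\big)$. Imposing the row-sum constraint on this expression forces $\exp(\lambda^{(j_1,j_2)})=\ett./\big(A^{(j_1,j_2)}\exp(\nu^{(j_1,j_2)})\big)$. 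I would then define $\varphi_j:=\exp(\nu^{(\parent(j),j)})$ for every non-root $j$, with $\varphi_j=:v_j$ at the leaves $j\in\ccL\setminus\{1\}$, together with $\varphi_{j_1\setminus j_2}:=\varphi_{j_1}./(A^{(j_1,j_2)}\varphi_{j_2})$ and $\hat\varphi_j:=\mu_j./\varphi_j$. With these substitutions the right diagonal of $M^{(j_1,j_2)}$ is $\varphi_{j_2}$, while the left diagonal becomes $\mu_{j_1}./(A^{(j_1,j_2)}\varphi_{j_2})=\hat\varphi_{j_1}\odot\varphi_{j_1\setminus j_2}$, matching the claimed expressions for $M^{(j_1,j_2)}$ and $\mu_j=\varphi_j\odot\hat\varphi_j$.

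It remains to recover the two recursions. The backward recursion \eqref{eq:phi_bw} comes from stationarity in the free internal marginals $\mu_j$, $j\in\ccV\setminus\ccL$. Here the key simplification is that the derivative of the reference term $H(\,\cdot\,|\,\diag(\mu_j)A^{(j,\cdot)})$ in $\mu_j$ vanishes at feasibility, since the rows of $A^{(j,\cdot)}$ sum to one and $M^{(j,\cdot)}\ett=\mu_j$; the stationarity condition then collapses to $\nu^{(\parent(j),j)}=-\sum_{k:(j,k)\in\ccE_r}\lambda^{(j,k)}$. Exponentiating and inserting $\exp(-\lambda^{(j,k)})=A^{(j,k)}\varphi_k$ gives $\varphi_j=\bigodot_{k:(j,k)\in\ccE_r}A^{(j,k)}\varphi_k$, which is \eqref{eq:phi_bw}; the root carries no parent multiplier, so I would simply take this same product as the definition of $\varphi_1$, completing the backward pass. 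The forward recursion \eqref{eq:phi_fw} then follows from the column-sum constraint: substituting the diagonal form of $M^{(\parent(j),j)}$ into $(M^{(\parent(j),j)})^T\ett=\mu_j$ and dividing by $\varphi_j$ gives $\hat\varphi_j=(A^{(\parent(j),j)})^T(\hat\varphi_{\parent(j)}\odot\varphi_{\parent(j)\setminus j})$, while at the root the fixed marginal pins $\hat\varphi_1=\mu_1./\varphi_1$, which I relabel as $\ett./v_1$. Existence of the boundary vectors $\{v_j\}_{j\in\ccL}$ (equivalently, of the optimal multipliers) is guaranteed by existence of an optimal primal--dual pair under convexity, feasibility and Slater. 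I expect the main obstacle to be the bookkeeping that turns the flat list of KKT equations into the two directed passes on the tree: recognizing that internal-marginal stationarity is exactly the product-over-children backward recursion, and checking that the coupling created by $\mu_{j_1}$ appearing inside its own reference measure is harmless at feasibility.
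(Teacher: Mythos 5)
Your proposal is correct and follows essentially the same route as the paper: both derive the diagonal-scaling form of $M^{(j_1,j_2)}$ from Lagrangian stationarity in $M$ and then extract the backward and forward recursions from the marginal equations. The only organizational difference is that the paper first replaces the pair of marginal constraints at each internal node by a single edge-consistency constraint $M^{(j_1,j_2)}\ett=(M^{(\parent(j_1),j_1)})^T\ett$, so it carries one multiplier per edge and never differentiates in $\mu_j$, whereas you keep separate row- and column-sum multipliers and link them via stationarity in the free internal marginals --- which works because, as you correctly observe, the derivative of $H(M\,|\,\diag(\mu_j)A)$ in $\mu_j$ vanishes at feasibility when $A$ is row-stochastic.
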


\begin{proof}
See the Appendix.
\end{proof}

Note that the vectors $v_j$ for $j\in\ccL$ in Theorem~\ref{thm:HMM_tree_sol} satisfy the nonlinear equations
$ \varphi_1./v_1=\mu_1$, and $v_j \odot \hat \varphi_j=\mu_j$, for $j\in\ccL\setminus\{1\}$.
When considering the Schr\"odinger bridge, which is the special case with two leaves, these equations correspond to the boundary conditions in the Schr\"odinger system \cite[Thm.~4.1]{pavon2010discrete}.
The factors $\varphi_j$ and $\hat \varphi_j$ can be seen as propagating information from the leaves to the other nodes in a similar manner as $\alpha_{(j,k)}$ in Theorem~\ref{thm:multi_omt_tree}. 

Next, we show that the tree-structured Schr\"odinger bridge problem \eqref{eq:HMM_tree} is independent of the choice of root $r\in \ccL$. This follows as a corollary to the next proposition.

\begin{proposition} \label{prp:HMM_tree_anyroot}
	Let $\ccT_r=(\ccV,\ccE_r)$ be a rooted directed tree with root $r \in \ccL$.
	Then problem \eqref{eq:HMM_tree} is equivalent to the problem
	\begin{equation} \label{eq:HMM_tree_anyroot}
	\begin{aligned}
	\minwrt[ \substack{M^{(j_1,j_2)}, (j_1,j_2) \in \ccE_r,\\ \mu_{j}, j \in \ccV \setminus \Gamma}] \ & \sum_{(j_1,j_2)\in \ccE_r} H \left( M^{(j_1,j_2)}\,|\,A^{(j_1,j_2)} \right) - \sum_{j \in \ccV \setminus \ccL} (\deg(j)-1) H(\mu_j) \\
	\text{subject to } \  &  M^{(j_1,j_2)} \mathbf{1} = \mu_{j_1}  , \ \  (M^{(j_1,j_2)})^T \mathbf{1} = \mu_{j_2}  ,\ \ \text{for } \  (j_1,j_2) \in \ccE_r.
	\end{aligned}
	\end{equation}
\end{proposition}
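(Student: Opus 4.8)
The plan is to show that, on the feasible set shared by \eqref{eq:HMM_tree} and \eqref{eq:HMM_tree_anyroot} (the constraints and decision variables of the two problems are literally identical), the two objective functions differ only by an additive constant. Equivalence of the two optimization problems then follows immediately: they have the same minimizers and their optimal values differ by that constant.

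First I would establish an edgewise Kullback--Leibler identity. Fix an edge $(j_1,j_2)\in\ccE_r$, abbreviate $M=M^{(j_1,j_2)}$ and $A=A^{(j_1,j_2)}$, and split the logarithm in Definition~\ref{def:kl} as $\log\!\big(M_{k\ell}/((\mu_{j_1})_k A_{k\ell})\big)=\log(M_{k\ell}/A_{k\ell})-\log(\mu_{j_1})_k$. A direct expansion then gives
\begin{equation}
H(M\,|\,\diag(\mu_{j_1})A)-H(M\,|\,A)=\sum_{k,\ell}\Big(-M_{k\ell}\log(\mu_{j_1})_k+(\mu_{j_1})_k A_{k\ell}-A_{k\ell}\Big).
\end{equation}
Now I would invoke two facts: the marginal constraint $M\ett=\mu_{j_1}$, i.e. $\sum_\ell M_{k\ell}=(\mu_{j_1})_k$, and the row-stochasticity of the transition matrix, $\sum_\ell A_{k\ell}=1$. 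These collapse the cross terms, and after recognizing $\sum_k(\mu_{j_1})_k\log(\mu_{j_1})_k$ in terms of $H(\mu_{j_1})$ via Definition~\ref{def:kl}, I expect to obtain the clean relation
\begin{equation} \label{eq:edge_identity}
H(M^{(j_1,j_2)}\,|\,\diag(\mu_{j_1})A^{(j_1,j_2)})=H(M^{(j_1,j_2)}\,|\,A^{(j_1,j_2)})-H(\mu_{j_1}),
\end{equation}
which holds at every feasible point and is the crux of the argument. Note that the hypothesis $\ett^T\mu_{j_1}=\ett^T\mu_{j_2}$ is not needed here; only the constraint and row-stochasticity enter.

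Next I would sum \eqref{eq:edge_identity} over all edges $(j_1,j_2)\in\ccE_r$ and bookkeep how often each term $H(\mu_j)$ occurs. Each edge contributes $H(\mu_{j_1})$ at its parent endpoint, so the coefficient of $H(\mu_j)$ equals the number of outgoing edges from $j$, i.e. its number of children. Because $\ccT_r$ is rooted at the \emph{leaf} $r=1$, a non-root leaf has no children, an internal node $j\in\ccV\setminus\ccL$ has exactly $\deg(j)-1$ children (one edge goes to its parent), and the root has a single child. Hence
\begin{equation}
\sum_{(j_1,j_2)\in\ccE_r}H(\mu_{j_1})=H(\mu_1)+\sum_{j\in\ccV\setminus\ccL}(\deg(j)-1)H(\mu_j),
\end{equation}
and substituting this into the summed form of \eqref{eq:edge_identity} shows that the objective of \eqref{eq:HMM_tree} equals that of \eqref{eq:HMM_tree_anyroot} minus $H(\mu_1)$.

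Finally, since $r=1\in\ccL=\Gamma$, the root marginal $\mu_1$ is a fixed boundary datum and not an optimization variable, so $H(\mu_1)$ is constant over the feasible set; this establishes the equivalence. The only mildly delicate step is the combinatorial counting of the children coefficients: it must correctly account for the root being a leaf, so that precisely the single leftover term $H(\mu_1)$ remains (and the internal-node coefficients match $\deg(j)-1$ exactly, with the non-root leaves contributing nothing). Everything else reduces to the routine Kullback--Leibler expansion above.
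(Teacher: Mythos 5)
Your proof is correct and follows essentially the same route as the paper's: the edgewise identity $H(M\,|\,\diag(\mu_{j_1})A)=H(M\,|\,A)-H(\mu_{j_1})$, derived from the marginal constraint $M\ett=\mu_{j_1}$ and row-stochasticity of $A$, followed by counting the outgoing edges per node ($\deg(j)-1$ for internal nodes, one for the leaf root, zero for the other leaves) and discarding the constant $H(\mu_1)$. No gaps; the bookkeeping of the coefficients and the observation that $\mu_1$ is fixed boundary data match the paper's argument exactly.
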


\begin{proof}
See the Appendix.
\end{proof}
	
	\begin{corollary} \label{cor:root_independent}
	Let $\ccT_r=(\ccV,\ccE_r)$ be a directed tree with root $r\in \ccL$, and $ \ccT_{\hat r} =(\ccV,  \ccE_{\hat r})$ be a directed tree with root $ \hat r \in \ccL$ with the same strucure as $\ccT_r$ and the edges on the path between $r$ and $\hat r$ reversed.
	Then, the solution to \eqref{eq:HMM_tree} on $\ccT_r$ and on $ \ccT_{\hat r}$ are equivalent in the sense that the marginal distributions $\mu_j$, for $j\in\ccV$, and transport plans $M^{(j_1,j_2)}$, for $ (j_1,j_2) \in \ccE_r \cap \ccE_{ \hat r}$, are the same, and on the reversed edges it holds $M^{(j_2,j_1)}= (M^{(j_1,j_2)})^T$.
\end{corollary}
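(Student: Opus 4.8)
The plan is to reduce the whole statement to Proposition~\ref{prp:HMM_tree_anyroot}, whose purpose is exactly to rewrite \eqref{eq:HMM_tree} in the form \eqref{eq:HMM_tree_anyroot}, where the orientation of the edges no longer enters through the reference measures $\diag(\mu_{j_1})A^{(j_1,j_2)}$. First I would apply Proposition~\ref{prp:HMM_tree_anyroot} twice: once to $\ccT_r$ and once to $\ccT_{\hat r}$, obtaining for each the equivalent problem \eqref{eq:HMM_tree_anyroot} over its own edge set ($\ccE_r$, respectively $\ccE_{\hat r}$). It then suffices to show that these two instances of \eqref{eq:HMM_tree_anyroot} are literally the same optimization problem after the identifications stated in the corollary. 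Before this I would note that reorienting only the path edges does yield a legitimate rooting at $\hat r$, since every subtree hanging off the path keeps its orientation (``away from $r$'' and ``away from $\hat r$'' coincide there), so Proposition~\ref{prp:HMM_tree_anyroot} is indeed applicable to $\ccT_{\hat r}$.

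Next I would compare the two objectives term by term. The second sum $-\sum_{j\in\ccV\setminus\ccL}(\deg(j)-1)H(\mu_j)$ depends only on the internal vertices, their (undirected) degrees, and the marginals $\mu_j$, all of which are unaffected by reorienting edges; hence this contribution is identical for $\ccT_r$ and $\ccT_{\hat r}$. In the first sum the edges of $\ccE_r\cap\ccE_{\hat r}$ contribute identical terms, so the only possible discrepancy lives on the reversed path edges. Here I would use the transpose convention for the data on a reversed edge, $A^{(j_2,j_1)}=\left(A^{(j_1,j_2)}\right)^T$, together with the substitution $M^{(j_2,j_1)}=\left(M^{(j_1,j_2)}\right)^T$, and invoke the fact that $H(\,\cdot\,|\,\cdot\,)$ in Definition~\ref{def:kl} is a sum over all matrix entries: transposing both arguments merely permutes the summation indices, so $H\!\left(\left(M^{(j_1,j_2)}\right)^T\mid \left(A^{(j_1,j_2)}\right)^T\right)=H\!\left(M^{(j_1,j_2)}\mid A^{(j_1,j_2)}\right)$. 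Thus the two first sums agree as well.

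Finally I would verify that the constraints match under the same substitution. On an edge $(j_1,j_2)\in\ccE_r$ the constraints are $M^{(j_1,j_2)}\ett=\mu_{j_1}$ and $\left(M^{(j_1,j_2)}\right)^T\ett=\mu_{j_2}$, while on the reversed edge $(j_2,j_1)\in\ccE_{\hat r}$ they are $M^{(j_2,j_1)}\ett=\mu_{j_2}$ and $\left(M^{(j_2,j_1)}\right)^T\ett=\mu_{j_1}$; with $M^{(j_2,j_1)}=\left(M^{(j_1,j_2)}\right)^T$ these are the very same pair of equations. Consequently the map sending $M^{(j_1,j_2)}$ to itself on $\ccE_r\cap\ccE_{\hat r}$, sending $M^{(j_1,j_2)}$ to $\left(M^{(j_1,j_2)}\right)^T$ on the reversed edges, and fixing every $\mu_j$, is an objective-preserving bijection between the feasible sets of the two problems. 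It therefore restricts to a bijection between their solution sets, giving $\mu_j$ equal for all $j\in\ccV$, $M^{(j_1,j_2)}$ equal on $\ccE_r\cap\ccE_{\hat r}$, and $M^{(j_2,j_1)}=\left(M^{(j_1,j_2)}\right)^T$ on the reversed edges; since the minimizer is unique, ``the'' solutions coincide in precisely the asserted sense.

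I expect the main subtlety to be pinning down the correct convention for the transition data on a reversed edge and confirming that Proposition~\ref{prp:HMM_tree_anyroot} genuinely strips the $\mu_{j_1}$-dependence out of the reference measure. Once the objective is in the $H\!\left(M^{(j_1,j_2)}\mid A^{(j_1,j_2)}\right)$ form, the transposition invariance of $H$ makes the edge reversal essentially automatic, and the rest is bookkeeping on the constraints. The only other point deserving a line is the remark above that reorienting the path edges produces a valid rooting at $\hat r$, which is what makes the second application of Proposition~\ref{prp:HMM_tree_anyroot} legitimate.
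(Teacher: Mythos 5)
Your overall strategy is the same as the paper's: reduce both problems to the root-free form \eqref{eq:HMM_tree_anyroot} via Proposition~\ref{prp:HMM_tree_anyroot}, observe that the degree/entropy term and the constraints are orientation-independent, and use the transposition invariance of $H(\cdot\,|\,\cdot)$ on the path edges. The degree-term and constraint bookkeeping in your argument is correct. However, there is a genuine gap in how you handle the data on the reversed edges. You posit the convention $A^{(j_2,j_1)}=\bigl(A^{(j_1,j_2)}\bigr)^T$, but the transpose of a row-stochastic matrix is in general not row-stochastic, so with this convention the problem on $\ccT_{\hat r}$ is not an instance of \eqref{eq:HMM_tree} (whose data are probability transition matrices), and more importantly Proposition~\ref{prp:HMM_tree_anyroot} cannot be applied to it: its proof simplifies the cross term $\sum_{i,j}\mu_i A_{ij}$ to $\sum_i\mu_i$ precisely by using $A\ett=\ett$, and without row-stochasticity the reformulation \eqref{eq:HMM_tree_anyroot} fails. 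Your second application of the proposition is therefore unjustified, and this cannot be patched by comparing the original objectives directly either, since $\diag(\mu_{j_2})A^T\neq\bigl(\diag(\mu_{j_1})A\bigr)^T$.

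The paper instead takes the reversed-edge matrix to be the time-reversed Markov kernel $A^{(j_2,j_1)}=\diag(\ett./a_{j_2})\bigl(A^{(j_1,j_2)}\bigr)^T\diag(a_{j_1})$, which is row-stochastic, and then the transposition step produces extra terms $H\bigl(\mu_{j_1}\,|\,\diag(\ett./a_{j_1})\bigr)+H\bigl(\mu_{j_2}\,|\,\diag(a_{j_2})\bigr)$ on each reversed edge. The remaining work, which your argument omits, is to show that when these are summed along the path from $r$ to $\hat r$ they telescope: they cancel at every inner node and reduce to constants at the two leaves $r$ and $\hat r$, where the marginals are prescribed. Once you replace your transpose convention with the reverse kernel and add this cancellation step, the rest of your argument (identical common edges, identical degree term, constraint matching under $M^{(j_2,j_1)}=\bigl(M^{(j_1,j_2)}\bigr)^T$) goes through and coincides with the paper's proof.
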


\begin{proof}
See the supplementary material.
\end{proof}

\subsection{Equivalence of problems (\ref{eq:omt_multi_regularized}) and (\ref{eq:HMM_tree}) for tree-structures} \label{subsec:equivalence}

We will now verify that the generalized Schr\"odinger bridge \eqref{eq:HMM_tree} on a rooted tree $\ccT_r$ is equivalent to an entropy regularized multi-marginal optimal transport problem \eqref{eq:omt_multi_regularized} on an undirected tree $\ccT$ with the same structure as $\ccT_r$.
In particular, given positive transition probability matrices $A^{(j_1,j_2)}$ for $(j_1,j_2)\in \ccE_r$ and a regularization parameter $\epsilon$, there is a natural choice of cost matrices $C^{(j_1,j_2)}$ for $(j_1,j_2)\in \ccE$, so that the minimizers to both problems represent the same solution.
For the original bi-marginal optimal transport problem \eqref{eq:omt_bi} the equivalence between entropy regularized
optimal transport
and the Schr\"odinger bridge problem has been studied extensively \cite{chen2016relation, chen2016hilbert, leonard2013schrodinger, Mikami2004}.
\begin{remark}[cf.{ \cite[Rem.~1]{haasler19ensemble}}] \label{rem:equivalence_bimarginal}
In the discrete setting, it is easy to see that with a cost matrix defined as $C= - \epsilon\log(A)$, the entropy regularized optimal transport problem \eqref{eq:omt_reg} is equivalent to the bi-marginal Schr\"odinger bridge \eqref{eq:opt_markov_chain}, by noting that the objective can then be written as
\begin{equation}
\tr(C^TM) + \epsilon H(M) = \sum_{i,j=1}^n  \epsilon  \left( M_{ij}  \log\left( \frac{M_{ij} }{ A_{ij} } \right) -M_{ij} +1 \right)  =  \epsilon H\left(M \, |\, A \right).
\end{equation}
Both bi-marginal problems thus yield the same optimal solution $M$.
\end{remark}
We will show that this equivalence holds true even for the multi-marginal case with tree-structured cost.
\begin{theorem} \label{thm:equivalence}
	Let $\ccT_r=(\ccV,\ccE_r)$ be a rooted directed tree with root $r=1 \in\ccL$, let $\ccT=(\ccV,\ccE)$ be its undirected counterpart, and let $\mu_j$ for $j \in\ccL$ be given marginals. 
Let $\epsilon>0$ and $C^{(j_1,j_2)}$ be such that 
	\begin{equation}
	K^{(j_1,j_2)}= A^{(j_1,j_2)}, \quad \text{ for } (j_1,j_2)\in \ccE_r.
	\end{equation}
Let $\bM = \bK\odot \bU$ be the solution to the entropy regularized multi-marginal optimal transport problem \eqref{eq:omt_multi_regularized} on $\ccT$, where $\bK$ and $\bU$ are defined as in Theorem~\ref{thm:multi_omt_tree}. Moreover, let $\mu_j$ for $j\in \ccV$, and $M^{(j_1,j_2)}$ for $(j_1,j_2)\in \ccE_r$, be the solution to the generalized Schr\"odinger bridge \eqref{eq:HMM_tree} on $\ccT_r$. 
Then,
\begin{equation}
	\begin{aligned}
		P_j(\bM) &= \mu_j, \qquad \qquad\;\, \text{ for } j\in \ccV,\\
	P_{(j_1,j_2)}(\bM) &= M^{(j_1,j_2)}, \qquad \text{ for } (j_1,j_2)\in \ccE_r.
	\end{aligned}
\end{equation}
Furthermore, if $v_j$ for $j\in \ccL$ are the corresponding vectors in Theorem~\ref{thm:HMM_tree_sol}, 
then it holds that
\begin{equation}
u_j = \begin{cases} \ett ./ v_1, & \text{if } j=1 \\ v_j, & \text{otherwise.} \end{cases}
\end{equation}
\end{theorem}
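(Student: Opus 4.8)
The plan is to prove the theorem by matching the explicit solution forms of the two problems factor by factor, and then invoking uniqueness of the entropy-regularized optimal transport optimizer. Concretely, I would start from the optimal Schr\"odinger-bridge quantities $\varphi_j$, $\hat\varphi_j$, $\varphi_{j_1\setminus j_2}$ and $v_j$ furnished by Theorem~\ref{thm:HMM_tree_sol}, define the candidate vectors $u_j$ by the claimed formula ($u_1 = \ett./v_1$ and $u_j = v_j$ for $j\neq 1$), and show that the resulting tensor $\bM = \bK\odot\bU$ is exactly the optimizer of \eqref{eq:omt_multi_regularized}. Since $K^{(j_1,j_2)} = A^{(j_1,j_2)}$ on every directed edge, the recursions \eqref{eq:alpha} for the message vectors $\alpha_{(j,k)}$ and the recursions \eqref{eq:phi_bw}--\eqref{eq:phi_fw} for $\varphi_j,\hat\varphi_j$ are built from the same matrices, so the whole argument reduces to showing that these two families of propagation factors coincide.

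The heart of the proof is therefore the following dictionary, which I would establish by induction on the tree, orienting every edge away from the root $r=1$ so that $j_1 = \parent(j_2)$ for $(j_1,j_2)\in\ccE_r$. Upward (leaves to root) I claim $\varphi_j = u_j \odot \bigodot_{k:(j,k)\in\ccE_r}\alpha_{(j,k)}$ for $j\neq 1$, which is immediate at a leaf since $\varphi_j = v_j = u_j$, and which follows for an internal node from \eqref{eq:alpha} together with $K^{(j,k)} = A^{(j,k)}$ and $u_j = \ett$. Downward (root to leaves) I claim $\hat\varphi_1 = u_1$ and $\hat\varphi_j = \alpha_{(j,\parent(j))}$ for $j\neq 1$; the base case is exactly $\hat\varphi_1 = \ett./v_1 = u_1$, and the inductive step uses the transpose identity $K^{(j,\parent(j))} = (A^{(\parent(j),j)})^T$ to match \eqref{eq:phi_fw} with the ``downward'' instance of \eqref{eq:alpha}. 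The one subtle point is the factor $\varphi_{\parent(j)\setminus j} = \varphi_{\parent(j)}./(A^{(\parent(j),j)}\varphi_j)$: since $A^{(\parent(j),j)}\varphi_j = \alpha_{(\parent(j),j)}$, the division removes precisely the contribution of the child $j$, so $\hat\varphi_{\parent(j)}\odot\varphi_{\parent(j)\setminus j}$ equals the bracketed quantity $u_{\parent(j)}\odot\bigodot_{\ell\in\ccN_{\parent(j)}\setminus\{j\}}\alpha_{(\parent(j),\ell)}$ appearing inside $\alpha_{(j,\parent(j))}$.

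With the dictionary in hand the three conclusions follow quickly. For the marginals, Theorem~\ref{thm:multi_omt_tree} gives $P_j(\bM) = u_j\odot\bigodot_{k\in\ccN_j}\alpha_{(j,k)}$; splitting $\ccN_j$ into the parent and the children and substituting the dictionary collapses this to $\varphi_j\odot\hat\varphi_j = \mu_j$, the root being handled separately (its only neighbour is its child, with $\varphi_1 = \alpha_{(1,c)}$ carrying the child contribution while $\hat\varphi_1 = u_1$ carries the local factor). For the transport plans, I would apply Proposition~\ref{prp:multi_omt_tree_pairwise} to the length-one path $j_1,j_2$: its two diagonal factors are $u_{j_1}\odot\bigodot_{k\in\ccN_{j_1}\setminus\{j_2\}}\alpha_{(j_1,k)} = \hat\varphi_{j_1}\odot\varphi_{j_1\setminus j_2}$ and $u_{j_2}\odot\bigodot_{k\in\ccN_{j_2}\setminus\{j_1\}}\alpha_{(j_2,k)} = \varphi_{j_2}$, so with $K^{(j_1,j_2)} = A^{(j_1,j_2)}$ the formula \eqref{eq:proj_j} reproduces $M^{(j_1,j_2)}$ exactly. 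Finally, since the constructed $\bM = \bK\odot\bU$ has the form characterizing the optimizer of \eqref{eq:omt_multi_regularized} and, by the marginal computation restricted to $j\in\ccL=\Gamma$, satisfies the imposed leaf constraints, uniqueness of that (strictly convex) optimizer identifies it with the optimal transport plan, and the stated relation between $u_j$ and $v_j$ then holds by construction.

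I expect the main obstacle to be the bookkeeping at the root, which is simultaneously a leaf and the one node where the $u$-factor is attached to $\hat\varphi_1$ rather than to $\varphi_1$; interlocking the base cases of the two inductions correctly, and verifying that $\varphi_{1\setminus j_2} = \ett$ so that the pairwise formula degenerates properly when $j_1 = 1$, is the delicate part. Everything else is a direct, if notation-heavy, substitution driven by the two already-proven projection results and the edge identity $K^{(j_1,j_2)} = A^{(j_1,j_2)}$.
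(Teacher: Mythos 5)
Your proposal is correct and follows essentially the same route as the paper's proof: both identify the key dictionary $\hat\varphi_j \leftrightarrow \alpha_{(j,\parent(j))}$ and $\varphi_j \leftrightarrow \bigodot_{k:(j,k)\in\ccE_r}\alpha_{(j,k)}$ by splitting $\ccN_j$ into parent and children, build the tensor from the $v_j$'s, and conclude by uniqueness of the regularized optimizer (the paper cites the tensor extension of Sinkhorn's theorem where you invoke strict convexity, a cosmetic difference). Your version is somewhat more explicit about the two inductions, the root bookkeeping, and the pairwise projections via Proposition~\ref{prp:multi_omt_tree_pairwise}, but the underlying argument is the same.
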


\begin{proof}
See the Appendix.
\end{proof}

In particular, given a maximum likelihood problem \eqref{eq:HMM_tree} structured according to the tree $\ccT_r=(\ccV,\ccE_r)$, a corresponding multi-marginal optimal transport problem can be formulated by definining the cost matrices as
$C^{(j_1,j_2)} = - \epsilon  \log(A^{(j_1,j_2)})$, for all $(j_1,j_2)\in\ccE_r$.
Vice versa, given an entropy regularized multi-marginal optimal transport problem \eqref{eq:omt_multi_regularized} with cost structured according to the tree $\ccT=(\ccV,\ccE)$, a corresponding maximum likelihood problem can be formulated by 
transforming the undirected tree into a rooted tree $\ccT_r=(\ccV,\ccE_r)$, where $r\in\ccL$,
and defining the matrices 
$A^{(j_1,j_2)} =  \exp \left( - C^{(j_1,j_2)}/ \epsilon \right)$,
for all $(j_1,j_2)\in\ccE_r$.
However, if these matrices are not row-stochastic, they cannot be interpreted as transition probability matrices of a Markov process.
\begin{remark}
For a nonnegative matrix $A$ with at least one positive element in each row, define the vector $b = \ett ./ A\ett$. Then the matrix $\hat A = \diag( b) A$ is row stochastic. Note that we can write $H \left(M \,|\, \diag(\mu) A \right) = H \left(M \,|\, \hat A \right) + H( \mu \, | \, b )$.
For a given rooted tree $\ccT_r=(\ccV,\ccE_r)$ assume that there is a set of matrices $A^{(j_1,j_2)}$, for $(j_1,j_2)\in\ccE_r$, which are not necessarily row-stochastic, but are such that for every $j_1 \in \ccE_r$ there is a vector $b_{j_1}$ such that it holds that $b_{j_1} = \ett ./ A^{(j_1,j_2)} \ett$ for all $j_2$ such that $(j_1,j_2)\in\ccE_r$. Then, according to Proposition~\ref{prp:HMM_tree_anyroot}, the objective function of the maximum likelihood estimation problem of a Markov process on $\ccT_r$ in \eqref{eq:HMM_tree} may be written as
\begin{equation} \label{eq:A_bethe}
\sum_{(j_1,j_2)\in \ccE_r} H \left( M^{(j_1,j_2)}\,|\,\hat A^{(j_1,j_2)} \right)  - \sum_{j \in \ccV \setminus \ccL} (\deg(j)-1) H(\mu_j \,|\, b_j).
\end{equation}
\end{remark}

The Sinkhorn scheme~\eqref{eq:sinkhorn_multi} can equivalently be formulated in terms of the notation from the maximum likelihood estimation problem in Theorem~\ref{thm:HMM_tree_sol}. Given an initial set of positive vectors $v_j$ for $j\in\ccL$, the updates are then expressed as
\begin{equation} \label{eq:sinkhorn_hmm}
\begin{aligned}
v_{1} =& \varphi_1 ./ \mu_1 \\
v_j =& \mu_j ./ \hat \varphi_j , \text{ for } j\in \ccL\setminus 1,
\end{aligned}
\end{equation}
where $\varphi_1$ is computed as in \eqref{eq:phi_bw} before each update of $v_1$, and $\hat \varphi_j$ is computed as in \eqref{eq:phi_fw} before updating $v_j$ for each $j\in \ccL\setminus 1$. This can be done efficiently, and by storing and reusing intermediate results appropriately, this algorithm is equivalent to Algorithm~\ref{alg:sinkhorn}.
It was shown that the classical Sinkhorn iterations are a block coordinate ascent in a Lagrange dual problem \cite{ringh2017sinkhorn,tseng1990dual}. In this sense, the scheme \eqref{eq:sinkhorn_hmm} can be understood as Sinkhorn iterations even for the problem \eqref{eq:HMM_tree}.

\begin{proposition} \label{prp:HMM_sinkhorn}
	Let $v_j$, for $j\in\ccL$, be an initial set of positive vectors. Iteratively performing the updates \eqref{eq:sinkhorn_hmm} corresponds to a block coordinate ascent in a Lagrange dual problem to \eqref{eq:HMM_tree}.
\end{proposition}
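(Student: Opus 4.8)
The plan is to deduce the claim from the already-established fact that the multi-marginal Sinkhorn iterations \eqref{eq:sinkhorn_multi} are a block coordinate ascent in the dual \eqref{eq:multi_omt_dual}, and to transport this statement through the equivalence of Theorem~\ref{thm:equivalence}. Concretely, I would fix the cost matrices so that $K^{(j_1,j_2)}=A^{(j_1,j_2)}$, i.e.\ $C^{(j_1,j_2)}=-\epsilon\log A^{(j_1,j_2)}$, which is exactly the regime of Theorem~\ref{thm:equivalence}. In this regime, the regularized problem \eqref{eq:omt_multi_regularized} on the undirected tree $\ccT$ and the tree-structured Schr\"odinger bridge \eqref{eq:HMM_tree} on $\ccT_r$ share the same minimizer, and Theorem~\ref{thm:equivalence} provides the explicit bijection between the scaling variables, namely $u_j=v_j$ for $j\in\ccL\setminus\{1\}$ and $u_1=\ett./v_1$.

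First I would recall that iterating \eqref{eq:sinkhorn_multi} over $j\in\Gamma=\ccL$ performs, in the variables $\lambda_j=\epsilon\log u_j$, an exact maximization of the concave dual objective \eqref{eq:multi_omt_dual} over one block $\lambda_j$ at a time while the others are frozen; this is the block coordinate ascent interpretation established in \cite{ringh2017sinkhorn,tseng1990dual,elvander19multi}. Since the map $u_j\mapsto v_j$ is a smooth bijection of the positive orthant, and the objective of \eqref{eq:HMM_tree} differs from that of \eqref{eq:omt_multi_regularized} only by the reparametrization made explicit in Theorem~\ref{thm:equivalence} (or, in the root-free form, by the entropy correction of Proposition~\ref{prp:HMM_tree_anyroot}), the dual \eqref{eq:multi_omt_dual} pulls back to a Lagrange dual of \eqref{eq:HMM_tree} whose blocks are indexed by the leaf vectors $v_j$, $j\in\ccL$.

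The remaining step is to check that the image of the update \eqref{eq:sinkhorn_multi} under this bijection is precisely \eqref{eq:sinkhorn_hmm}. For a leaf $j\in\ccL\setminus\{1\}$ the single neighbour is $\parent(j)$, so Theorem~\ref{thm:multi_omt_tree} gives $P_j(\bK\odot\bU)=u_j\odot\alpha_{(j,\parent(j))}$ and \eqref{eq:sinkhorn_multi} reduces to $u_j\leftarrow\mu_j./\alpha_{(j,\parent(j))}$. Identifying the collected message $\alpha_{(j,\parent(j))}$ with the forward factor $\hat\varphi_j$ of \eqref{eq:phi_fw} and using $u_j=v_j$, this is exactly $v_j=\mu_j./\hat\varphi_j$. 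For the root $j=1$ the same reduction yields $u_1\leftarrow\mu_1./\alpha_{(1,k)}$ with $k$ its only child; identifying $\alpha_{(1,k)}$ with the backward factor $\varphi_1$ of \eqref{eq:phi_bw} and using $u_1=\ett./v_1$ gives $v_1=\varphi_1./\mu_1$, the first line of \eqref{eq:sinkhorn_hmm}. Hence the two schemes coincide term by term, and \eqref{eq:sinkhorn_hmm} inherits the block coordinate ascent structure.

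The crux of the argument is the identification of the propagation factors: one must verify that the recursively defined messages $\alpha_{(j,k)}$ of \eqref{eq:alpha} agree with the products $\varphi_j$ and $\hat\varphi_j$ built in \eqref{eq:phi_bw}--\eqref{eq:phi_fw}, which is where the directionality of the rooted tree and the inverse relation $u_1=\ett./v_1$ at the root must be handled with care. I expect this bookkeeping, rather than any analytic difficulty, to be the main obstacle; it is essentially settled by the correspondence already extracted in the proof of Theorem~\ref{thm:equivalence}, and once it is in place the fact that a bijective reparametrization carries a block coordinate ascent to a block coordinate ascent is immediate.
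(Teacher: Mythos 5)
Your route is genuinely different from the paper's. The paper never invokes Theorem~\ref{thm:equivalence}: it forms the Lagrangian of \eqref{eq:HMM_tree} directly (reusing the relaxation from the proof of Theorem~\ref{thm:HMM_tree_sol}), writes down the dual function \eqref{eq:dual_HMM_tree} --- which has a multiplier $\lambda_{(j_1,j_2)}$ for every edge in addition to the leaf multipliers $\lambda_j$ --- and checks that setting each block of its gradient to zero reproduces the updates \eqref{eq:sinkhorn_hmm} together with the recursions for $\varphi_j$ and $\hat\varphi_j$. Your verification that the update formulas \eqref{eq:sinkhorn_multi} and \eqref{eq:sinkhorn_hmm} coincide under the identification $u_j=v_j$ for $j\in\ccL\setminus\{1\}$ and $u_1=\ett./v_1$ is correct, and is indeed the content already extracted in the proof of Theorem~\ref{thm:equivalence}.

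The gap is the sentence ``the dual \eqref{eq:multi_omt_dual} pulls back to a Lagrange dual of \eqref{eq:HMM_tree}.'' The two primal problems have different decision variables and different constraint sets: \eqref{eq:HMM_tree} carries coupling constraints on every edge of $\ccE_r$, so its Lagrange dual lives in a much larger space than \eqref{eq:multi_omt_dual}, which has only the leaf blocks $\lambda_j$, $j\in\Gamma$. Equivalence of minimizers --- which is all Theorem~\ref{thm:equivalence} gives you; it matches projections of the optimal solutions, not objective values or dual problems --- does not by itself turn a reparametrization of one problem's dual into a Lagrange dual of the other problem. To close this you would need to show that partially maximizing the Schr\"odinger-bridge dual over the edge multipliers (which is exactly what the recomputation of $\varphi_1$ and $\hat\varphi_j$ between leaf updates accomplishes) collapses it, up to the factor $\epsilon$ and the inversion at the root, onto \eqref{eq:multi_omt_dual}. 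That partial maximization is precisely the substance of the paper's proof: the stationarity conditions in $\lambda_{(j_1,j_2)}$ for inner edges are the message recursions of Theorem~\ref{thm:HMM_tree_sol}. So your argument defers, rather than avoids, the real work; without that step the assertion that the ascent takes place in \emph{a Lagrange dual of \eqref{eq:HMM_tree}} --- which is what the proposition actually claims --- is not established. With it supplied, your transfer argument would be a clean alternative.
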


 In the light of Proposition~\ref{prp:HMM_sinkhorn} it is not surprising, that the algorithm presented in \cite{haasler19ensemble}, for the special case where the tree represents a hidden Markov chain, is of the form \eqref{eq:sinkhorn_hmm}, where the vectors $\varphi_j$ and $\hat\varphi_j$ are constructed according to the special tree structure.

\begin{remark} \label{rem:cycle}
The tree-structured optimal transport problem, i.e., problem \eqref{eq:omt_multi_discrete}
where the cost is of the form \eqref{eq:cost_tensor_tree} for a tree-graph $\ccG=(\ccV,\ccE)$, can be formulated as a sum of pairwise optimal transport costs 
	\begin{equation} \label{eq:omt_pairwise_unreg}
	\minwrt[  \mu_{j}, j \in \ccV \setminus \Gamma] \  \sum_{(j_1,j_2)\in \ccE} T^{(j_1,j_2)}(\mu_{j_1},\mu_{j_2}),
	\end{equation}
	where $T^{(j_1,j_2)}(\cdot,\cdot)$ is the optimal transport problem \eqref{eq:omt_bi} with cost matrix $C^{(j_1,j_2)}$ for $(j_1,j_2)\in \ccE$.
In particular, both problems have common optimal solutions which are identical in the sense that the projections of an optimal tensor $\bM$ in  \eqref{eq:omt_multi_discrete} coincide with a set of optimal transport plans in \eqref{eq:omt_pairwise_unreg} in the sense that $P_{(j_1,j_2)}(\bM) = M^{(j_1,j_2)}$ for $(j_1,j_2)\in \ccE$.
	
	{\makeatletter
		\let\par\@@par
		\par\parshape0
	\everypar{}
	\begin{wrapfigure}{r}{0.35\textwidth}
		\centering
		\vspace{-15pt}
		\begin{tikzpicture}
		\tikzstyle{main}=[circle, minimum size = 8mm, thick, draw =black!80, node distance = 15mm]
		\node[main,fill=black!10] (mu1) {$\mu_1$};
		\node[main] (mu2) [above right=of mu1]{$\mu_2$};
		\node[main] (mu3) [below right=of mu2] {$\mu_3$};
		
		\draw[  thick] (mu1) -- node[above left] {$M^{(1,2)}$}  (mu2);
		\draw[  thick] (mu2) -- node[above right] {$M^{(2,3)}$}  (mu3);
		\draw[  thick] (mu1) -- node[above] {$M^{(1,3)}$}  (mu3); 
		\end{tikzpicture}
		\vspace{-10pt}
		\caption{Illustration of the cyclic graph in Remark~\ref{rem:cycle}.}
		\vspace{-15pt}
		\label{fig:cycle}
	\end{wrapfigure}
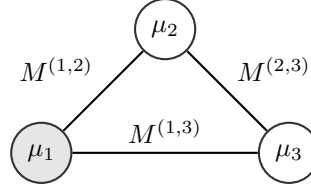
	This, however, can not be generalized to graphs $\ccG=(\ccV,\ccE)$ that contain cycles. A counterexample can be found for a complete graph with three nodes (Figure \ref{fig:cycle}).
		Let the cost matrices be 
		\begin{equation}
		C^{(1,2)} \!=\! C^{(2,3)} \!=\! J \!=\! \begin{bmatrix} 0 & 1 \\ 1 & 0 \end{bmatrix}\!, \quad C^{(1,3)} \!=\! I \!=\!\begin{bmatrix} 1 & 0 \\ 0 & 1 \end{bmatrix}\!,
		\end{equation}
		where $\mu_1= [1,\;1 ]^T$, and $\Gamma=\{1\}$. The unique optimal solution to \eqref{eq:omt_pairwise_unreg} with these parameters are	$\mu_2=\mu_3=\begin{bmatrix} 1 & 1 \end{bmatrix}^T, \quad 	M^{(1,2)}= M^{(2,3)} = I, \quad M^{(1,3)} = J,$	and the objective value is $0$. 
		However, note that the corresponding multimarginal cost tensor given by $\bC_{i_1,i_2,i_3}=C^{(1,2)}_{i_1,i_2}+C^{(1,3)}_{i_1,i_3}+C^{(2,3)}_{i_2,i_3}$ is elementwise strictly positive, and thus the multimarginal problem cannot attain the value $0$. In fact, there is no tensor $\bM\in \mR_+^{2\times 2\times 2}$ that is consistent with these projections $P_{1,2}(\bM)=P_{2,3}(\bM)=I$ and $P_{1,3}(\bM)=J$.
		\par }
	
Here we have illustrated a fundamental difficulty that must be handled when extending tree-graphs to graphs with cycles.
Thus, naively extending the Schr\"odinger bridge from trees to general graphs by defining transition probability matrices $A^{(j_1,j_2)} =  \exp \left( - C^{(j_1,j_2)}/ \epsilon \right)$,
	for all $(j_1,j_2)\in\ccE$, and solving \eqref{eq:HMM_tree} does not yield an equivalence result with a multi-marginal optimal transport problem as in Theorem~\ref{thm:equivalence}.
\end{remark}

\section{Multi-marginal vs. pairwise regularization}
\label{sec:pairwise}

Another natural way to define an optimal transport problem on the tree $\ccT$ is to minimize the sum of all bi-marginal transport costs on the edges of $\ccT$, as in \eqref{eq:omt_pairwise_unreg}.
	In fact, the unregularized multi-marginal optimal transport problem \eqref{eq:omt_multi_discrete} structured according to $\ccT=(\ccV,\ccE)$ is equivalent to this pairwise problem.	
	An alternative computational approach to the one taken in this paper is thus to regularize each term in the sum of \eqref{eq:omt_pairwise_unreg} by an entropy term.
The entropy regularized pairwise optimal transport problem on $\ccT$ is then  
\begin{equation} \label{eq:omt_pairwise}
\minwrt[  \mu_{j}, j \in \ccV \setminus \Gamma] \  \sum_{(j_1,j_2)\in \ccE} T_\epsilon^{(j_1,j_2)}(\mu_{j_1},\mu_{j_2}),
\end{equation}
where $T^{(j_1,j_2)}_\epsilon(\cdot,\cdot)$ is defined as the regularized bi-marginal optimal transport problem \eqref{eq:omt_reg} with cost matrix $C^{(j_1,j_2)}$.
In particular, it is common to formulate barycenter problems in this pairwise regularized manner \cite{benamou2015bregman, cuturi2014barycenter, solomon2015,  kroshnin2019complexity, lin2020revisiting,  bonneel2016wasserstein}.
Recently, we have empirically observed that in some applications the barycenter problem with multi-marginal regularization as in \eqref{eq:omt_multi_regularized} gives better results as compared to the pairwise regularization in \eqref{eq:omt_pairwise}, see \cite[Sec. 6.3]{elvander19multi}. Therein solutions using the multi-marginal regularization are less smoothed out as compared to the pairwise regularization. In the following we investigate the difference between the two problems. In particular, we show that the latter is not equivalent to the generalized Schr\"odinger bridge \eqref{eq:HMM_tree}.

Consider a rooted version of the tree, denoted $\ccT_r=(\ccV,\ccE_r)$, where $r\in\ccL$.
In the case that $A^{(j_1,j_2)} = K^{(j_1,j_2)}$, for all $(j_1,j_2)\in\ccE_r$, the multi-marginal optimal transport problem has the same solution as problem \eqref{eq:HMM_tree_anyroot}. Now note that using Proposition~\ref{prp:HMM_tree_anyroot} this problem can be written as
\begin{equation} \label{eq:omt_pairwise_hmm}
\minwrt[  \mu_{j}, j \in \ccV \setminus \Gamma] \! \!  \sum_{(j_1,j_2)\in \ccE} \! \!  T_\epsilon^{(j_1,j_2)}(\mu_{j_1},\mu_{j_2}) - \sum_{j\in\ccV \setminus \ccL} (\deg(j)-1)H(\mu_j).
\end{equation}
Thus, the generalized Schr\"odinger bridge problem \eqref{eq:HMM_tree}, or equivalently \eqref{eq:omt_pairwise_hmm}, is not equivalent to the pairwise regularized optimal transport problem \eqref{eq:omt_pairwise}, unless for the trivial case of a tree with only 2 vertices.
Moreover, we can see from \eqref{eq:omt_pairwise_hmm} that the multi-marginal optimal transport problem penalizes not only the transport cost between the marginals, but in addition favors marginal distributions with low entropy.%
\footnote{Recall that the definition of $H(\mu_j)$ essentially corresponds to the negative of the entropy of $\mu_j$.}
One can thus expect less smoothed out distributions when solving the multi-marginal optimal transport problem, which is desirable in many applications, such as localization problems \cite{elvander19multi} and computer vision applications \cite{solomon2015}.
Solving tree-structured problems by using multi-marginal regularization thus has some advantages as compared to pairwise regularization. Firstly, it yields less smoothed out solutions, and secondly it preserves the connections to the Schr\"odinger bridge problem.
Finally,
empirical study suggests that the multi-marginal problem is better conditioned compared to the pairwise problem, which allows for smaller values of the regularization parameter $\epsilon$, while still yielding a numerically stable algorithm. We give empirical evidence for this behaviour in Section~\ref{sec:ex_tree}.

The solution of the pairwise optimal transport problem \eqref{eq:omt_pairwise} is compared to the solution of the two equivalent problems \eqref{eq:omt_multi_regularized} and \eqref{eq:HMM_tree} on the example of a path graph in Section~\ref{sec:ex_bridge}, and on a more complex tree in Section~\ref{sec:ex_tree}.

\subsection{The discrete time Schr\"odinger bridge problem} \label{sec:ex_bridge}

%
	%
	\begin{figure}
		\centering
		\begin{tikzpicture}
		\tikzstyle{main}=[circle, minimum size = 8mm, thick, draw =black!80, node distance = 15mm]
		\node[main,fill=black!10] (mu1) {$\mu_1$};
		\node[main] (mu2) [right=of mu1]{$\mu_2$};
		\node[] (mu3) [right=of mu2] {};
		\node[] (muJm1) [right=of mu3] {};  
		\node[main,fill=black!10] (muJ) [right=of muJm1] {$\mu_J$};
		
		\draw[->, -latex, thick] (mu1) -- node[above] {$M^{(1,2)}$} node[below] {$A_1$} (mu2);
		\draw[->, -latex, thick] (mu2) -- node[above] {$M^{(2,3)}$} node[below] {$A_2$} (mu3);
    	\draw[loosely dotted, very thick] (mu3) -- (muJm1); 
		\draw[->, -latex, thick] (muJm1) -- node[above] {$M^{(J-1,J)}$} node[below] {$A_{J-1}$} (muJ); 
		\end{tikzpicture}
		\caption{Illustration of the linear path tree in Section~\ref{sec:ex_bridge}.}  \vspace{-10pt}
		\label{fig:model_bridge}
	\end{figure}
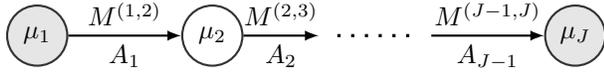
%
Consider a path tree $\ccT_r=(\ccV,\ccE_r)$, where $\ccV=\{1,2,\dots,J\}$ and $\ccE_r= \{ (j,j+1) | j=1,\dots,J-1\}$, as sketched in Figure~\ref{fig:model_bridge}. Let $A^j$ denote the probability transition matrix on $(j,j+1)\in\ccE_r$. This model corresponds to a Markov chain of length $J$. Assume that the distributions on the leaves $j=1$ and $j=J$ are known. The most likely particle evolutions between them are then found by solving \eqref{eq:opt_markov_chain}.
Following \cite{haasler19ensemble}, we see that this problem is equivalent to the discrete time and discrete space Schr\"odinger bridge in \cite{pavon2010discrete}.
Assume that the distributions $\mu_j$, for $j=1,\dots,J$, are strictly positive, and define the row stochastic matrices $\bar A^j = \diag(\mu_j)^{-1} M^j$, for $j=1,\dots,J$.
In terms of these matrices problem \eqref{eq:opt_markov_chain} reads
\begin{equation} \label{eq:discrete_schrodinger_bridge}
\begin{aligned}
\minwrt[\bar A_{[1:J-1]}, \mu_{[2:J-1]}] \  & \sum_{j=1}^{J-1} \sum_{i=1}^n (\mu_j)_i H \left(  \bar A^j_{i \cdot} \ | \ A^j_{i \cdot}\right)  \\
\text{subject to } \ & \bar A^j \mathbf{1} = \ett ,\ \  \mu_{j+1} = (\bar A^j)^T \mu_{j},\ \ \text{for } \  j=1,\dots,J-1.
\end{aligned}
\end{equation}
Here $A_{i \cdot}$ denotes the $i$-th row of $A$. Problem \eqref{eq:discrete_schrodinger_bridge} is exactly the formulation of a Schr\"odinger bridge over a Markov chain from \cite[eq.~(24)]{pavon2010discrete}.
In \cite{pavon2010discrete} it is shown that a unique solution to a corresponding Schr\"odinger system exists if $\mu_J$ is a strictly positive distribution and the matrix $ \prod_{j=1}^{J-1} A^j$ has only positive elements. The solution to the Schr\"odinger system may be obtained as a fixed point iteration \cite{Georgiou2015discreteSB}, which is linked to the Sinkhorn iterations for entropy regularized optimal transport problems. 
We recall from \cite{haasler19ensemble} that the optimization problem \eqref{eq:discrete_schrodinger_bridge} is non-convex, whereas the equivalent formulation \eqref{eq:opt_markov_chain} is convex.

We note that with cost matrices defined as $C^j= -\epsilon \log(A^j)$, for $j=1,\dots,J-1$, the discrete Schr\"odinger bridge problem may be written as
\begin{equation}
\minwrt[  \mu_{2},\dots, \mu_{J-1}]   \sum_{j=1}^{J-1}  T_\epsilon^{(j,j+1)}(\mu_{j},\mu_{j+1}) - \sum_{j=2}^{J-1} H(\mu_j).
\end{equation}
In comparison to the pairwise optimal transport problem on the path graph $\ccT$, the Schr\"odinger bridge thus favors intermediate distributions with lower entropy, resulting in less smoothed out solutions.
\begin{figure} 
	\centering
	\subfigure[$\epsilon=10^{-2}$]{ \includegraphics[trim={13pt 6pt 33pt 32pt},clip,width=0.225\columnwidth]{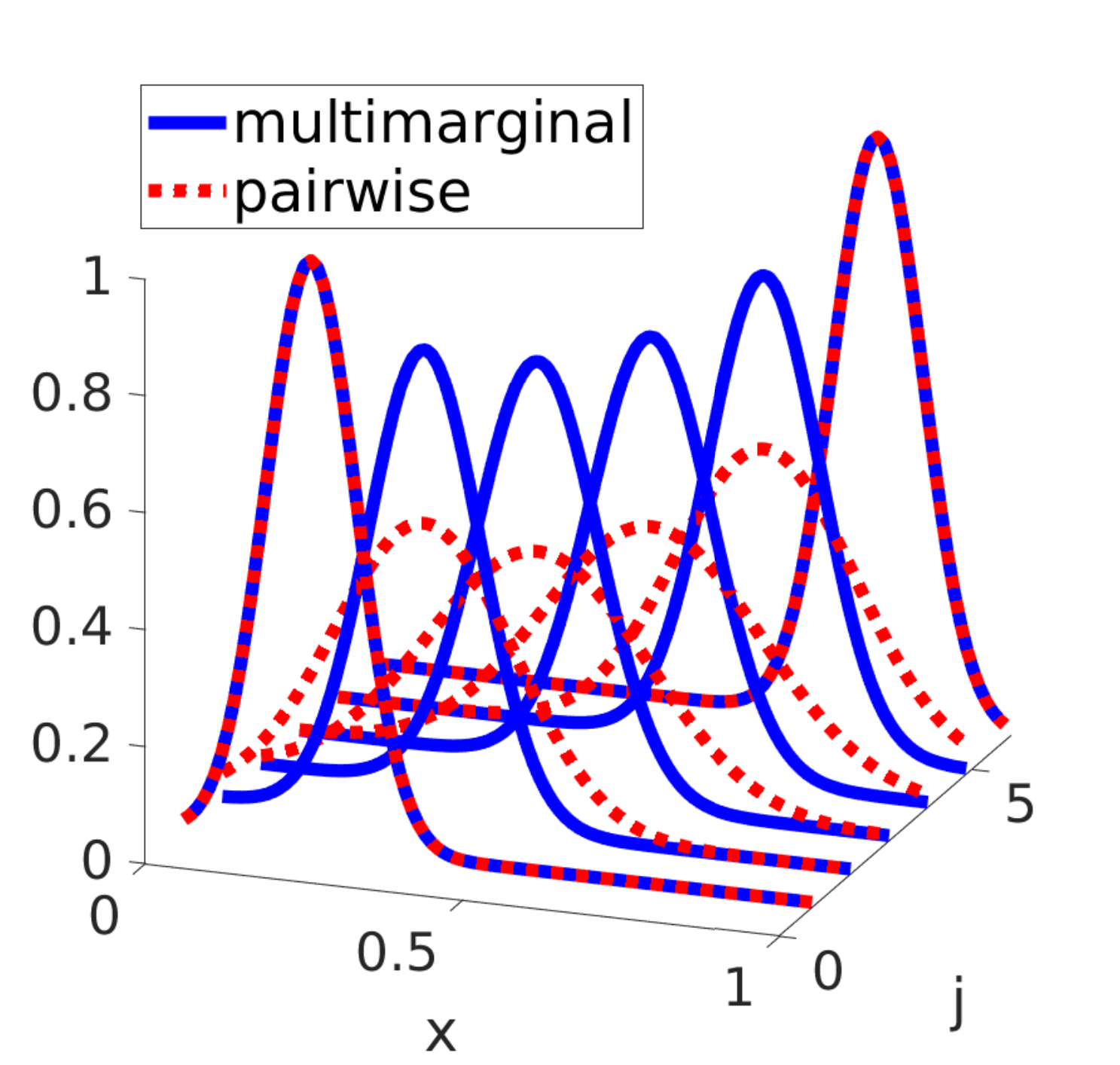} }
	\subfigure[$\epsilon=5 \cdot 10^{-3}$]{ \includegraphics[trim={13pt 6pt 33pt 32pt},clip,width=0.225\columnwidth]{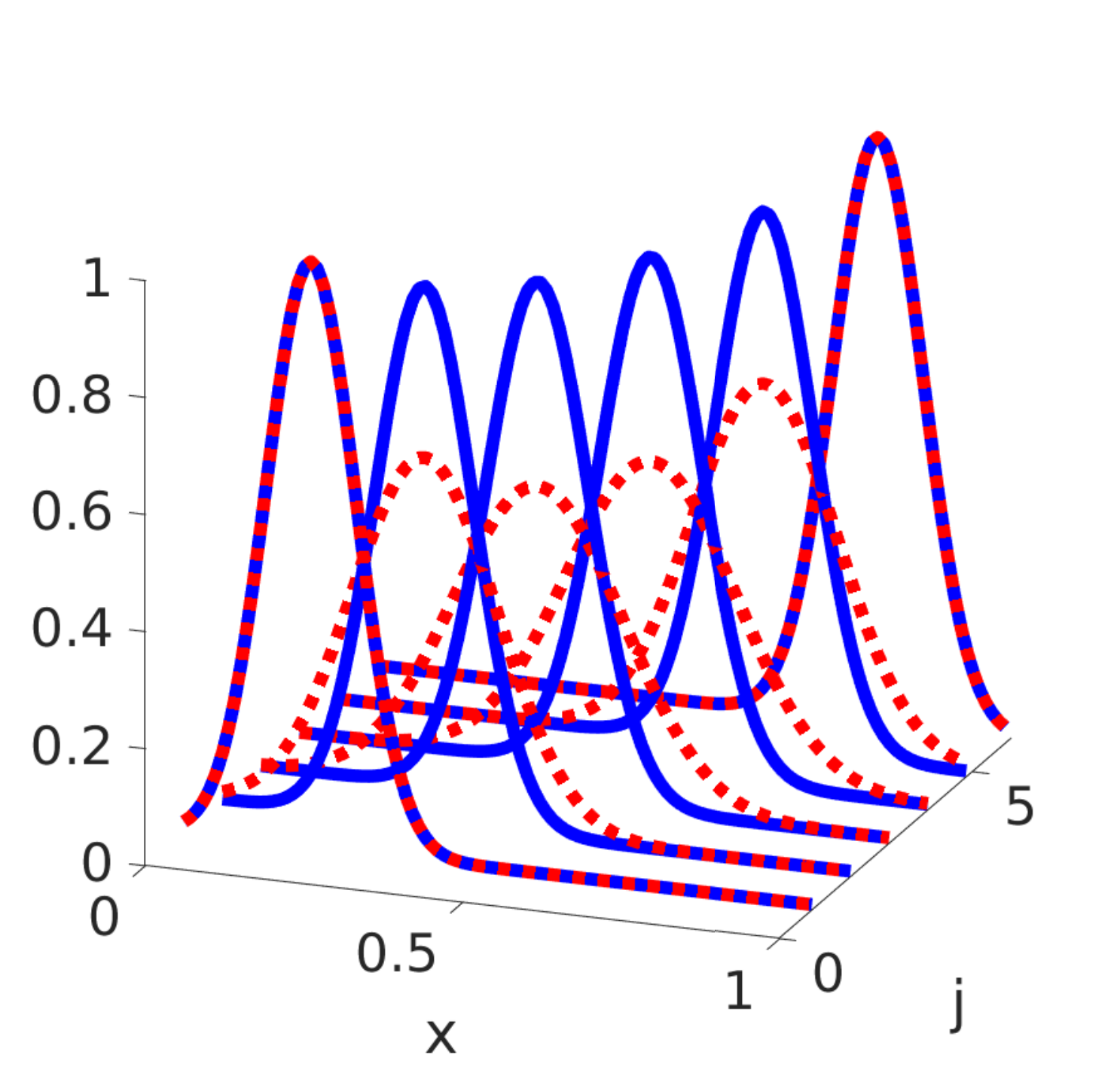} }
	\subfigure[$\epsilon=10^{-3}$]{ \includegraphics[trim={13pt 6pt 33pt 32pt},clip,width=0.225\columnwidth]{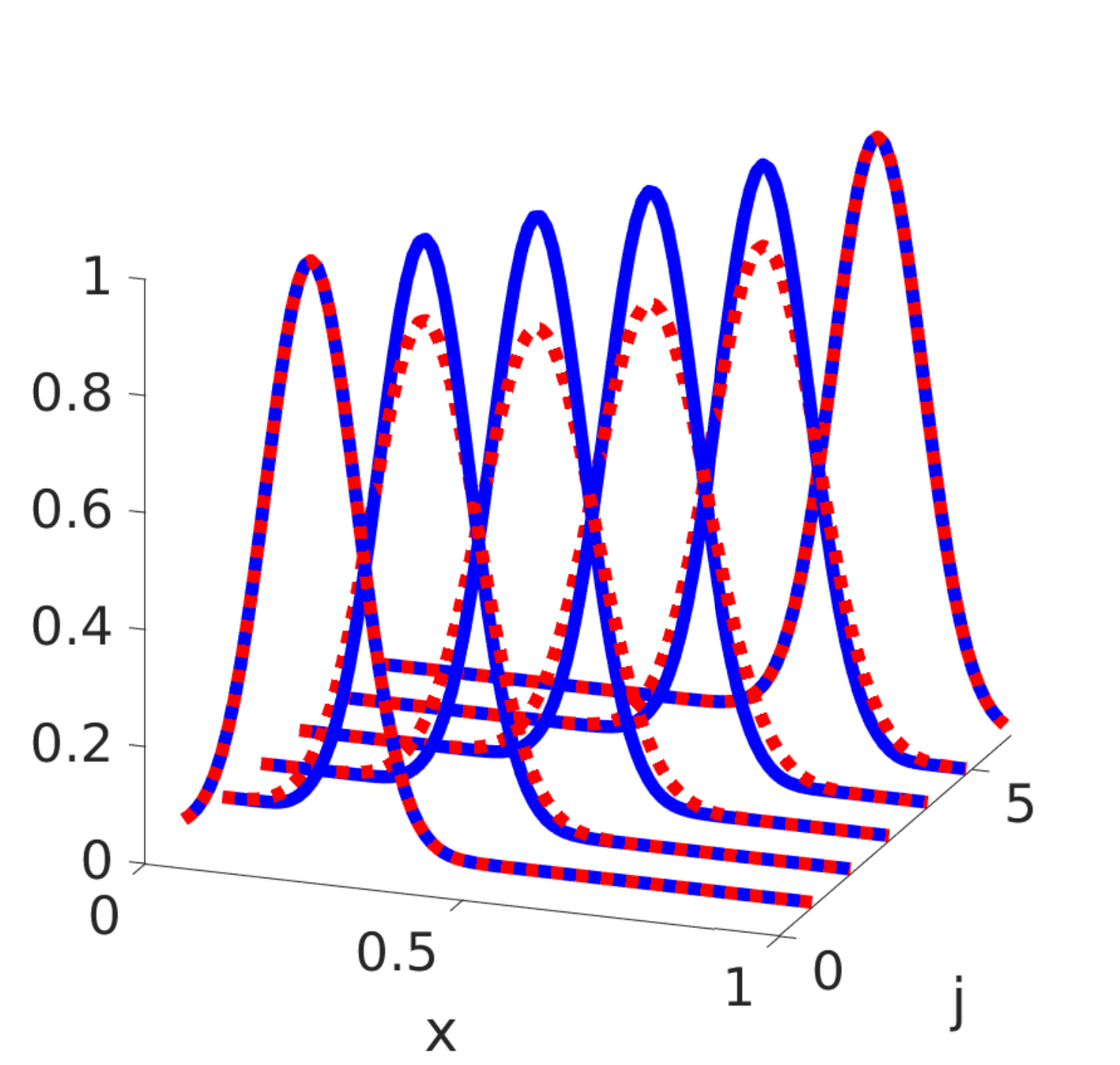} }
	\subfigure[$\epsilon=5 \cdot 10^{-4}$]{ \includegraphics[trim={13pt 6pt 33pt 32pt},clip,width=0.225\columnwidth]{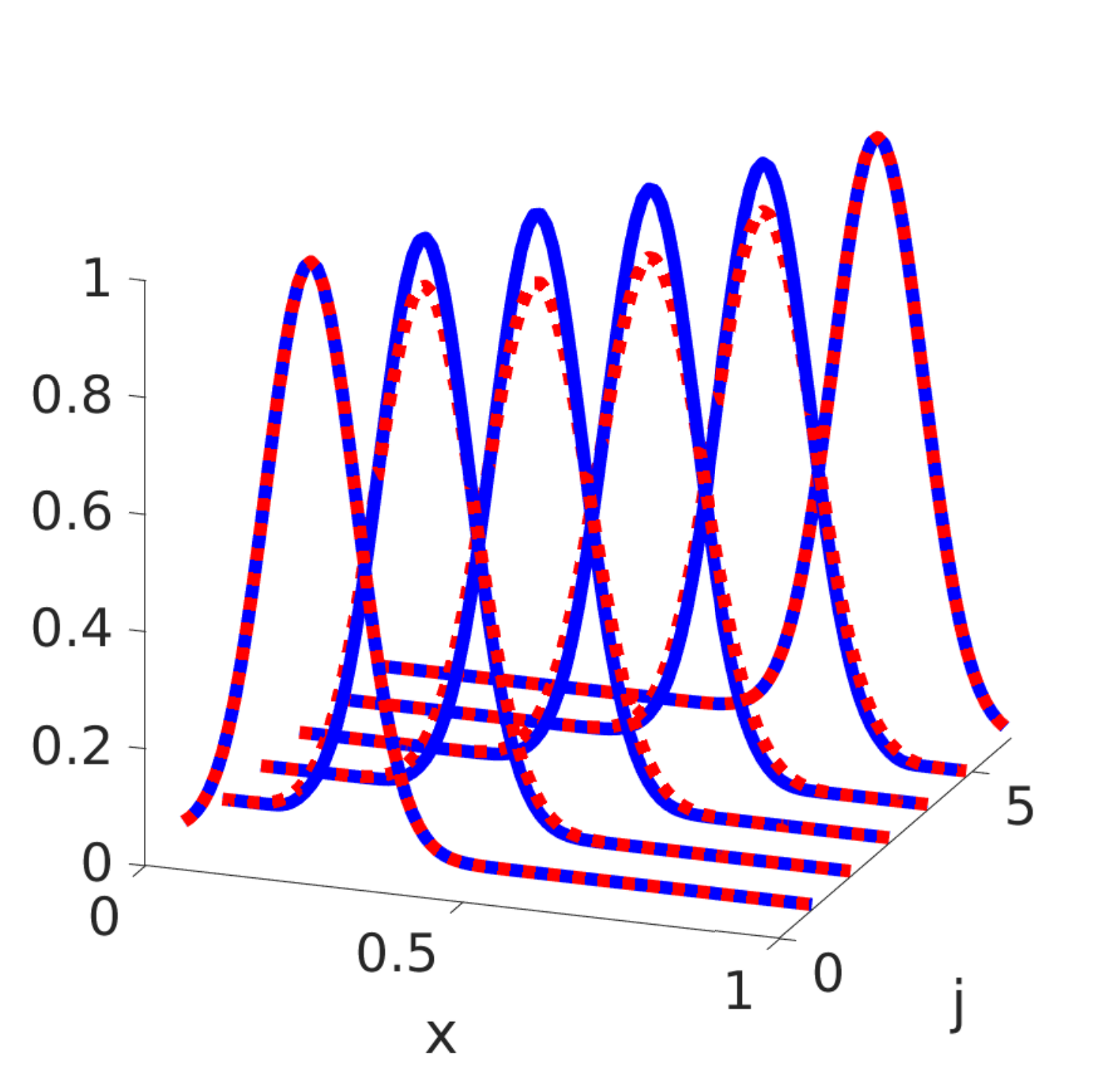} }
	\caption{Solutions to the multi-marginal and pairwise entropy regularized optimal transport problem on a path graph for varying regularization parameter $\epsilon$.} \vspace{-10pt}
	\label{fig:bridge_solution}
\end{figure}
We illustrate this behaviour for a path tree $\ccT_r=(\ccV,\ccE_r)$, with vertices $\ccV=\{1,2,\dots,6\}$, and where the initial and final distribution are given as $\mu_1(x)= \exp(-(\frac{x-0.2}{10})^2)$, and $\mu_J(x)= \exp( - (\frac{x-0.8}{10})^2)$, and the cost function is defined as the Euclidean distance.
The results to problem~\eqref{eq:discrete_schrodinger_bridge} and problem~\eqref{eq:omt_pairwise} are compared for different values of the regularization parameter $\epsilon$ in Figure~\ref{fig:bridge_solution}.
Both optimal transport solutions describe a smooth way of shifting the mass from distribution $\mu_1$ to $\mu_J$. The larger the regularization parameter $\epsilon$ is chosen, the more smoothed out are the intermediate solutions to both problems. However, for each value of $\epsilon$ the multi-marginal optimal transport solution infers substantially less smoothing than the pairwise optimal transport solution.

\subsection{Optimal transport with tree-structure} \label{sec:ex_tree}
In this section we compare the entropy regularized multi-marginal and pairwise optimal transport solutions on a more general tree.
Consider the tree $\ccT=(\ccV,\ccE)$ illustrated in Figure~\ref{fig:tree_a} with 15 nodes, each representing a $50\times50$ pixel image. The marginal images on the 8 leaves, coloured in gray, are known.
Each edge on the tree $\ccT$ is associated with a cost function defined by the Euclidean distance between any two pixels. Using this choice of cost function in the corresponding optimal transport problems yields smooth translations in power for the intermediate marginals.

\begin{figure}
	\centering
	\subfigure[Tree for the example in Section~\ref{sec:ex_tree}.]{ \label{fig:tree_a}  \centering
		\begin{tikzpicture}
		\tikzstyle{main}=[circle, minimum size = 3.8mm, thick, draw =black!80, node distance = 1.5mm]
		\node[main,fill=black!10] (mu1) {};
		\node[main] (mu3) [below=of mu1] {};
		\node[main,fill=black!10] (mu2) [left=of mu3] {};
		\node[main] (mu4) [below=of mu3] {};
		\node[main] (mu5) [below=of mu4] {};
		\node[main,fill=black!10] (mu6) [left=of mu5] {};
		\node[main,fill=black!10] (mu7) [below=of mu5] {};
		\node[main] (mu8) [right=of mu4] {};
		\node[main] (mu9) [right=of mu8] {};
		\node[main] (mu10) [above=of mu9] {};
		\node[main,fill=black!10] (mu11) [above=of mu10] {};
		\node[main,fill=black!10] (mu12) [right=of mu10] {};
		\node[main] (mu13) [below=of mu9] {};
		\node[main,fill=black!10] (mu14) [below=of mu13] {};
		\node[main,fill=black!10] (mu15) [right=of mu13] {};

		\draw (mu1) --(mu3);
		\draw (mu2) --(mu3);
		\draw (mu3) --(mu4);
		\draw (mu4) --(mu5);
		\draw (mu5) --(mu6);
		\draw (mu5) --(mu7);
		\draw (mu4) --(mu8);
		\draw (mu8) --(mu9);
		\draw (mu9) --(mu10);
		\draw (mu10) --(mu11);
		\draw (mu10) --(mu12);
		\draw (mu9) --(mu13);
		\draw (mu13) --(mu14);
		\draw (mu13) --(mu15);
		\end{tikzpicture}
	}
	\hfill
	\subfigure[Pairwise optimal transport, $\epsilon=2 \cdot 10^{-3}$ ]{ \label{fig:tree_b} \includegraphics[trim={4pt 2pt 4pt 3pt}, clip,  width=0.22\columnwidth] {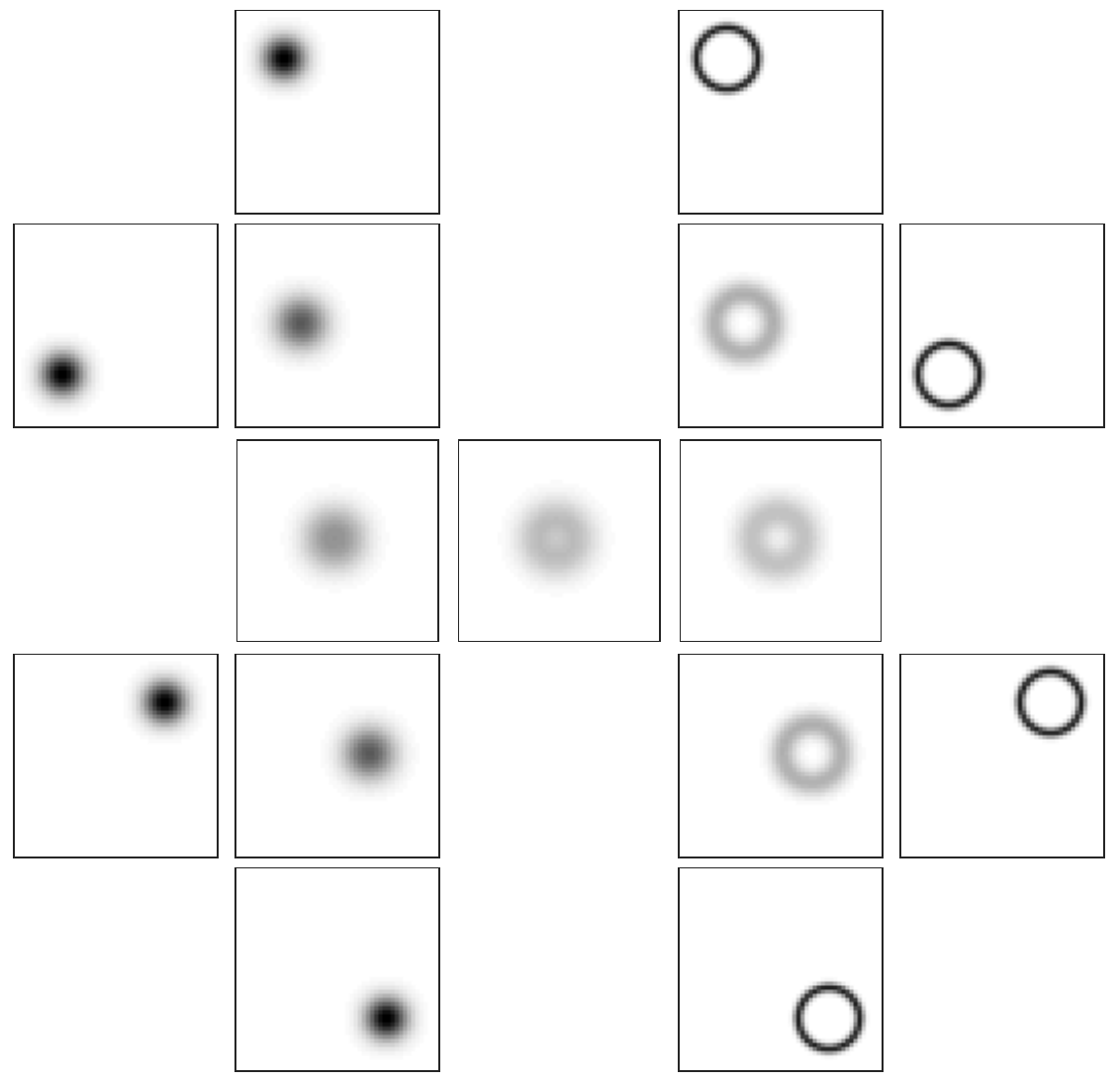}  }
	\subfigure[Multi-marginal optimal transport, $\epsilon=2 \cdot 10^{-3}$]{ \label{fig:tree_c}  \includegraphics[trim={4pt 2pt 4pt 3pt}, clip, width=0.22\columnwidth]{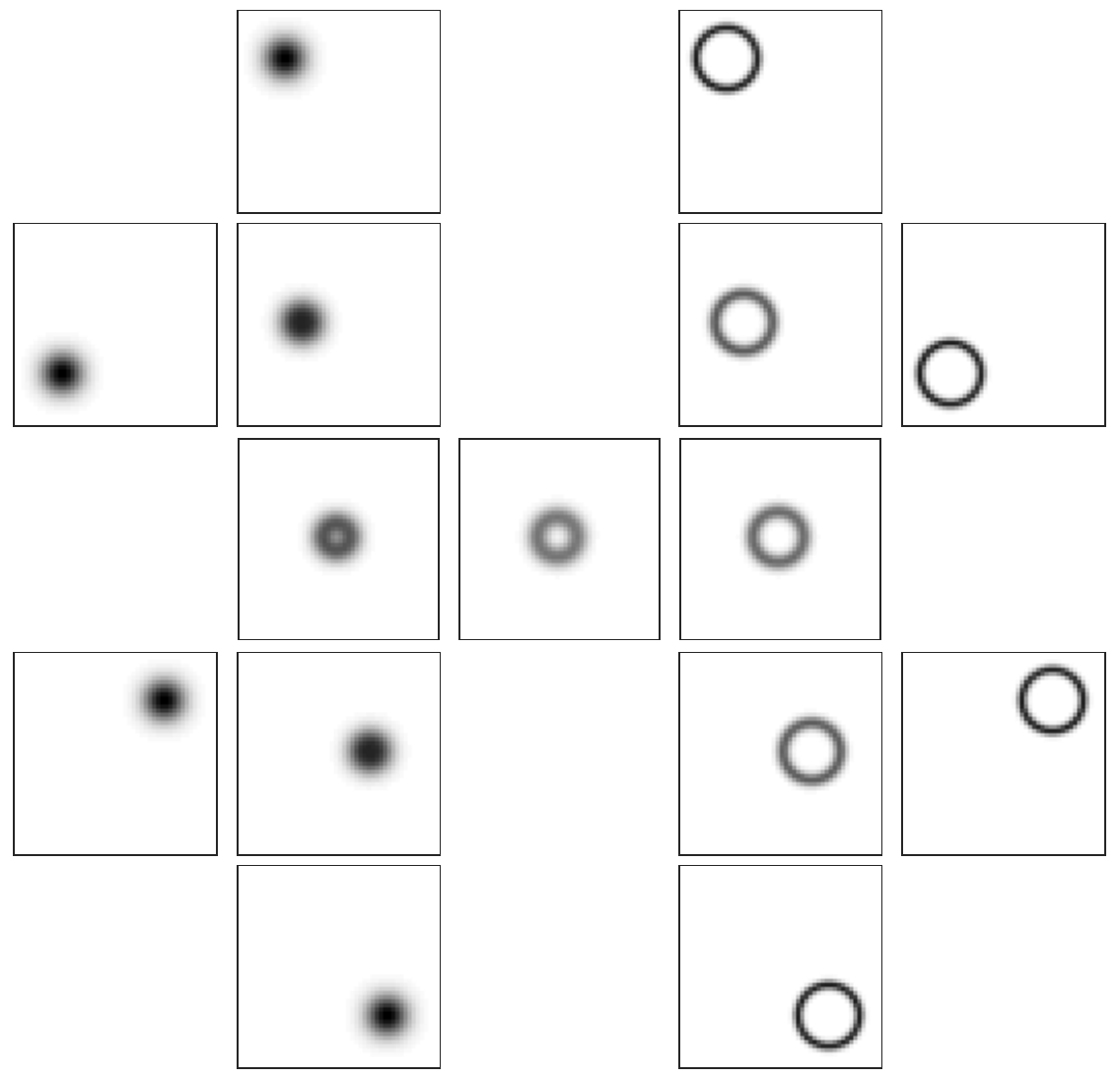} }
	\subfigure[Multi-marginal optimal transport, $\epsilon=4 \cdot 10^{-4}$]{ \label{fig:tree_d}  \includegraphics[trim={4pt 2pt 4pt 3pt}, clip, width=0.22\columnwidth]{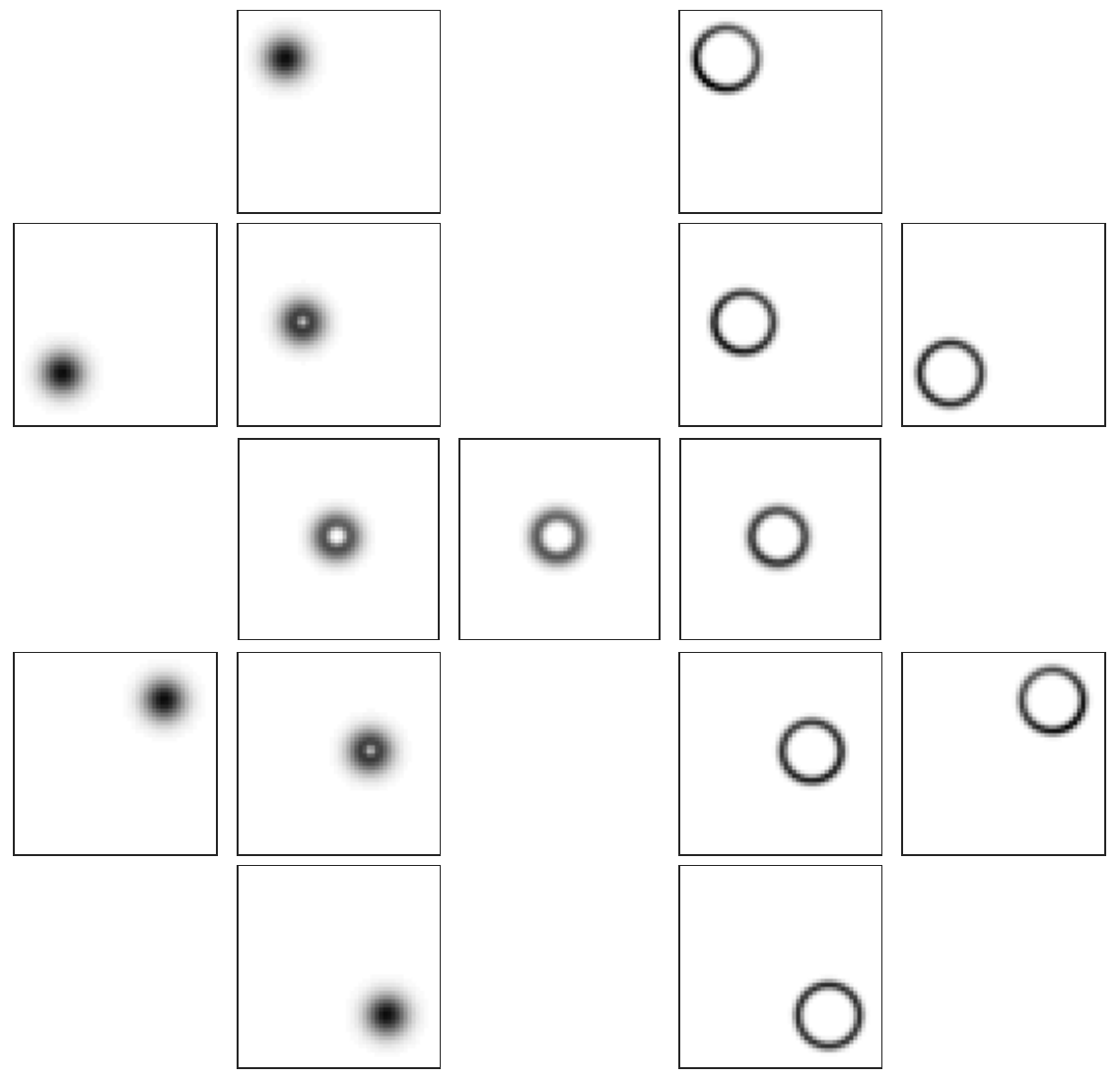} }
	\caption{Estimated marginals of the pairwise (b) and multi-marginal (c,d) optimal transport solutions on the tree in (a).}  \vspace{-10pt}
	\label{fig:omt_tree}
\end{figure}

We solve the entropy regularized pairwise optimal transport problem \eqref{eq:omt_pairwise} with regularization parameter $\epsilon=2 \cdot 10^{-3}$ on $\ccT$. The solution can be seen in Figure~\ref{fig:tree_b}. Compared to the pairwise optimal transport estimate, the solution to the entropy regularized multi-marginal optimal transport problem \eqref{eq:omt_multi_regularized} on the same tree $\ccT$ and with the same regularization parameter $\epsilon$ is significantly sharper and less smoothed out, see Figure \ref{fig:tree_c}.
For the pairwise optimal transport problem, the method diverges with a smaller regularization parameter, 
e.g., $\epsilon=10^{-3}$.
In contrast, for the multi-marginal formulation the regularization parameter can be decreased further, still yielding a numerically stable algorithm. We have found that the method is still stable for a regularization parameter of $\epsilon=4\cdot 10^{-4}$, which results in very clear estimates on the intermediate nodes.

\section{Estimating ensemble flows on a hidden Markov chain} \label{sec:ensemble_flows}
We consider the problem of tracking an ensemble of agents on a network based on aggregate measurements from sensors distributed around the network. 
This is similar to \cite{haasler19ensemble}, where ensemble flows of indistinguishable agents have been estimated as the maximum likelihood solution on a hidden Markov chain.
The present work generalizes this method and provides an algorithm for solving the problem introduced therein.
In particular, the framework in \cite{haasler19ensemble} is a special case of the method in Theorem~\ref{thm:HMM_tree_sol}, and can therefore be solved with Algorithm~\ref{alg:sinkhorn}.
Herein, we study different observation models and robustness of the estimates with respect to the number of agents.
\begin{figure}
\subfigure[Illustration of the Markov model.]{ \label{fig:HMM_tree_model}
	\begin{tikzpicture}
	\tiny
	\tikzstyle{main}=[circle, minimum size =8mm, thick, draw =black!80, node distance = 6mm]
	\tikzstyle{obs}=[circle, minimum size = 8mm, thick, draw =black!80, node distance = 4 mm and 2.5mm ]
	\node[main] (mu1) {$\mu_1$};
	\node[main] (mu2) [right=of mu1] {$\mu_2$};
	\node[] (mu3) [right=of mu2] {};
	\node[] (muTm1) [right=of mu3] {};  
	\node[main] (muT) [right=of muTm1] {$\mu_\tau$};
	
	\node[] (phi1c) [below=of mu1] {};  
	\node[obs,fill=black!10] (phi11) [left=of phi1c] {$\Phi_{1,1}$};  
	\node[obs,fill=black!10] (phi1S) [right=of phi11] {$\Phi_{1,S}$};
	\node[obs,fill=black!10] (phi21) [right=of phi1S] {$\Phi_{2,1}$};
	\node[obs,fill=black!10] (phi2S) [right=of phi21] {$\Phi_{2,S}$};
	\node[node distance = 3mm] (phi3) [right=of phi2S] {};
	\node[node distance = 3mm] (phiTm1) [right=of phi3] {};
	\node[obs,fill=black!10] (phiT1) [right=of phiTm1] {$\Phi_{\tau,1}$};
	\node[obs,fill=black!10] (phiTS) [right=of phiT1] {$\Phi_{\tau,S}$};
	
	\node[] (space) [below=of phi1c] {};
	
	\draw[->, -latex, thick] (mu1) -- node[above] {$M^1$} (mu2);
	\draw[->, -latex, thick] (mu2) -- node[above] {$M^2$} (mu3); 
	\draw[loosely dotted, very thick] (mu3) -- (muTm1); 
	\draw[->, -latex, thick] (muTm1) -- node[above] {$M^{\tau-1}$} (muT); 
	
	\draw[->, -latex, thick] (mu1) -- node[left] {$D^{1,1}$} (phi11);
	\draw[->, -latex, thick] (mu1) -- node[right] {$D^{1,S}$} (phi1S);
	\draw[->, -latex, thick] (mu2) -- node[left] {$D^{2,1}$} (phi21);
	\draw[->, -latex, thick] (mu2) -- node[right] {$D^{2,S}$} (phi2S);
	\draw[->, -latex, thick] (muT) -- node[left] {$D^{\tau,1}$} (phiT1);
	\draw[->, -latex, thick] (muT) -- node[right] {$D^{\tau,S}$} (phiTS);
	
	\draw[loosely dotted, very thick] (phi11) -- (phi1S); 
	\draw[loosely dotted, very thick] (phi21) -- (phi2S); 
	\draw[loosely dotted, very thick] (phiT1) -- (phiTS); 
	\draw[loosely dotted, very thick] (phi3) -- (phiTm1); 
	\end{tikzpicture}
	} 
	\subfigure[Network and sensors.]{ \label{fig:ensemble_network}
		\includegraphics[trim={40pt 20pt 40pt 10pt}, clip, width=0.37\textwidth]{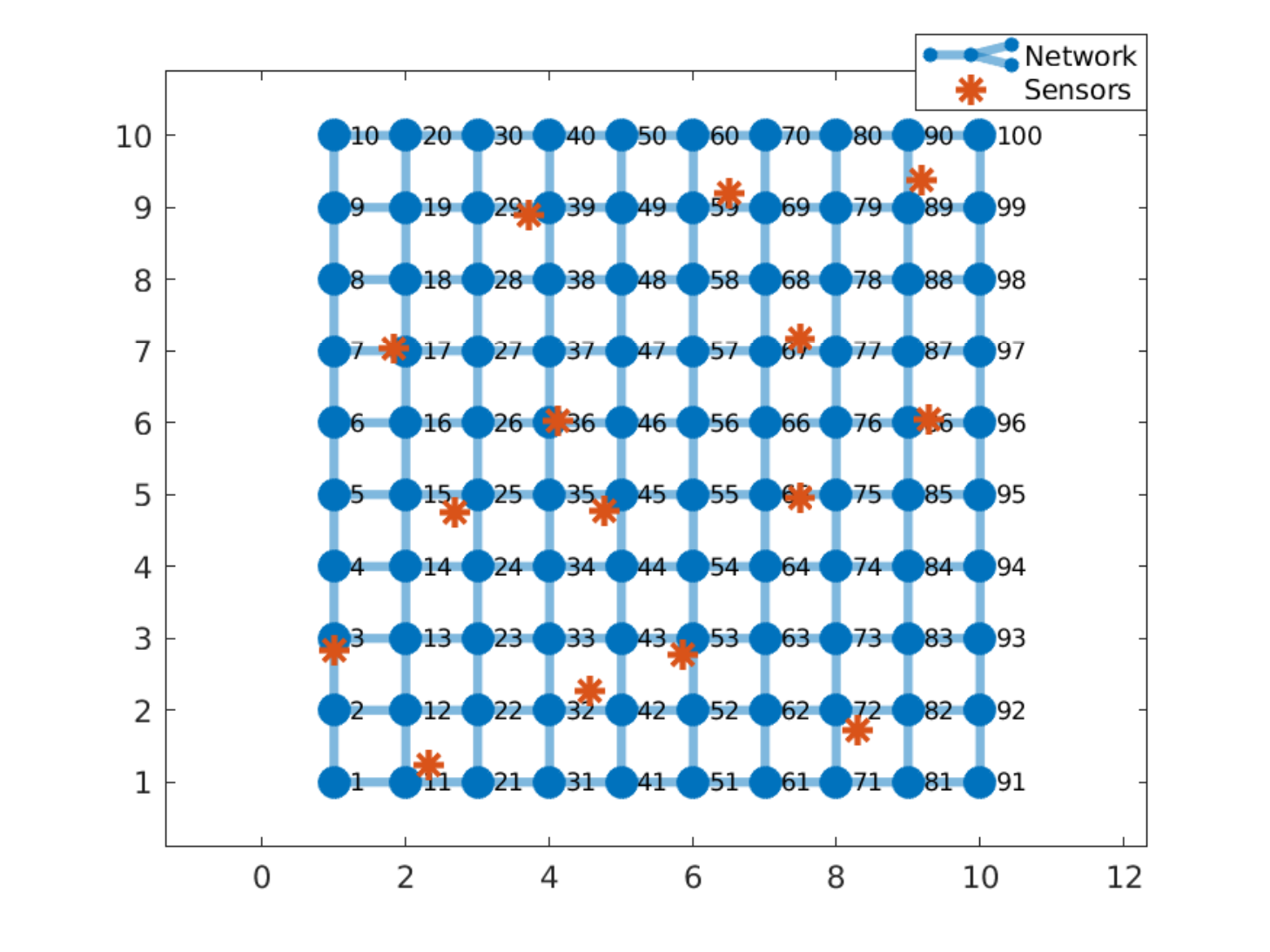}	} \vspace{-5pt}
	\caption{Ensemble flow estimation example in Section~\ref{sec:ensemble_flows}.} \vspace{-5pt}
\end{figure}

Consider a graph $\ccG=(\ccV_\ccG, \ccE_\ccG)$ with vertices $\ccV_\ccG$ and edges $\ccE_\ccG$. 
Let the set of nodes $\ccV_\ccG$ be the set of states of a Markov model with transition probability matrix $A^t\in \mR^{n\times n}$, where $n=|\ccV_\ccG|$, for $t=1,\dots,\tau-1$.
We simulate a finite number of agents to evolve according to this Markov model for a number of time steps $t=1,\dots,\tau$.
Let $\mu_t\in \mR^{n}$ denote the distribution of agents over the set of states for the times $t=1,\ldots, \tau$.
An observation model is represented by the detection probability matrices $B^s\in \mR^{n\times m}$ where $m$ is the size of the observation space, and $s=1,\ldots, S$ where $S$ denotes the number of uncoupled observations at each time point. 
Let $\Phi_{t,s}\in \mR^m$ denote the aggregate measurements at  time $t$ for observation $s$. 
We then estimate the ensemble evolution as the solution of
\begin{equation}
\begin{aligned}
 \minwrt[M_{[1:\tau-1]}, D_{[1:\tau],[1:S]}, \mu_{[1:\tau]}] \ & \sum_{t=1}^{\tau-1} H(  M^t \ | \ \diag(\mu_{t})A^t)  + \sum_{t=1}^{\tau} \sum_{s=1}^S H( D^{s,t} \ | \ \diag(\mu_t) B^s ) \nonumber\\
\text{subject to} \qquad \quad &  M^t \ett = \mu_{t-1} ,\;\; (M^t)^T \ett = \mu_{t}, \;\; D^{t,s} \ett = \mu_t, \;\; (D^{t,s})^T \ett = \Phi_{t,s}, \label{eq:KL_multi_measurement} \\
 &  \nonumber  \text{for } t=1,\dots, \tau, \text{ and } s=1,\dots,S. \nonumber
\end{aligned}
\end{equation}
The tree corresponding to the Markov model \eqref{eq:KL_multi_measurement} is sketched in Figure~\ref{fig:HMM_tree_model}
This optimization problem is similar to the one in \cite{haasler19ensemble}, but therein the initial distribution of agents, i.e., the marginal $\mu_1$, is assumed to be known.

In the following we consider a number of $N$ agents evolving on the network displayed in Figure~\ref{fig:ensemble_network} with $n=100$ nodes and 180 edges. Figure~\ref{fig:ensemble_network} also shows the location of the $N_S = 15$ sensors.
In a first step, we compute the discrete Schr\"odinger bridge \eqref{eq:opt_markov_chain} between the distribution $\mu_1$, where all agents are in node $1$, and the distribution $\mu_{\tau}$, where all agents are in node $100$, with the random walk on $\ccG$ as a prior. The resulting particle evolutions $M^t$, for $t=1,\dots,\tau-1$, in \eqref{eq:opt_markov_chain} define the transition probability matrices $\bar A^t=\diag(\mu_t)^{-1}M^t$, for $t=1,\dots,\tau-1$, as in \eqref{eq:discrete_schrodinger_bridge}, used to simulate a number of $N$ agents with initial distribution $\mu_1$. By construction, the final distribution is then $\mu_\tau$.

We consider two observation models, one where the sensors make uncoupled measurements, similar to the example in \cite{haasler19ensemble}, and one where the sensors are coupled and form a joint measurement.
In the uncoupled setting, we consider an observation space of $2$ states for each sensor, where one state denotes that an agent is detected, and the other one that the agent is undetected by the sensor. Hence, we define an observation probability matrix $B^s \in \RR^{n \times 2}$ for each sensor $s=1,\dots,S=N_S$, where the probability for an agent on node $i$ to be detected by the sensor $s$ is defined as $B^s_{i1} = \min(0.99, 2 e^{-d_{s,i}} )$, where $d_{s,i}$ denotes the Euclidean distance between the location of sensor $s$ and the node $i$. Consequently the probability of not being detected is $B^s_{i2}=1-B^s_{i1}$.
For the coupled observation model, the observation space consists of all possible sets of sensors that detect a given agent, i.e., all subsets of the set $\{1,\dots,N_S\}$. The size of the observation space is thus $2^{N_S}$, and there is only one observation at each time instance, i.e., $S=1$. Hence, we define an observation probability matrix $B^{\rm joint} \in \RR^{n \times 2^S}$, where the probability for an agent in node $i$ to be detected by exactly the set $\mathfrak{S}$ of sensors is given by
$ (\prod_{s \in \mathfrak{S}} B^s_{i1})(\prod_{s \notin \mathfrak{S}} B^s_{i2})$.

We solve the multi-marginal optimal transport problem corresponding to \eqref{eq:KL_multi_measurement} with assumed probability transition matrix $A$ describing a random walk on $\ccG$, and observation probability matrices $B^s$ for the two described observation models.
The results for both observation models are compared for $N\in\{10,100,1000\}$  agents in 
Figure \ref{fig:ensemble},
where the estimated number of agents in each node is plotted as a circle with size corresponding to the log-scaled weigthed number of agents in that node.
In Figure~\ref{fig:ensemble_N10}, one can see that for $N=10$ agents the coupled estimate localizes the position of the agents slightly better than the uncoupled estimate.  However, this effect decreases with an increasing number of agents, as the evolution of a single agent has less influence on the empirical distribution of agents. Already from a number of $N=100$ agents the uncoupled estimate
is competitive with the coupled estimate, albeit relying on significantly less information (cf. Figure~\ref{fig:ensemble_N100}). For an ensemble of $N=1000$ agents one can hardly see any difference between the two estimates, as shown in Figure~\ref{fig:ensemble_N1000}.
\begin{figure*}
	\subfigure[$N=10$ agents.]{ \label{fig:ensemble_N10} \includegraphics[ trim={2pt 5pt 12pt 17pt}, clip, width=.3\textwidth]{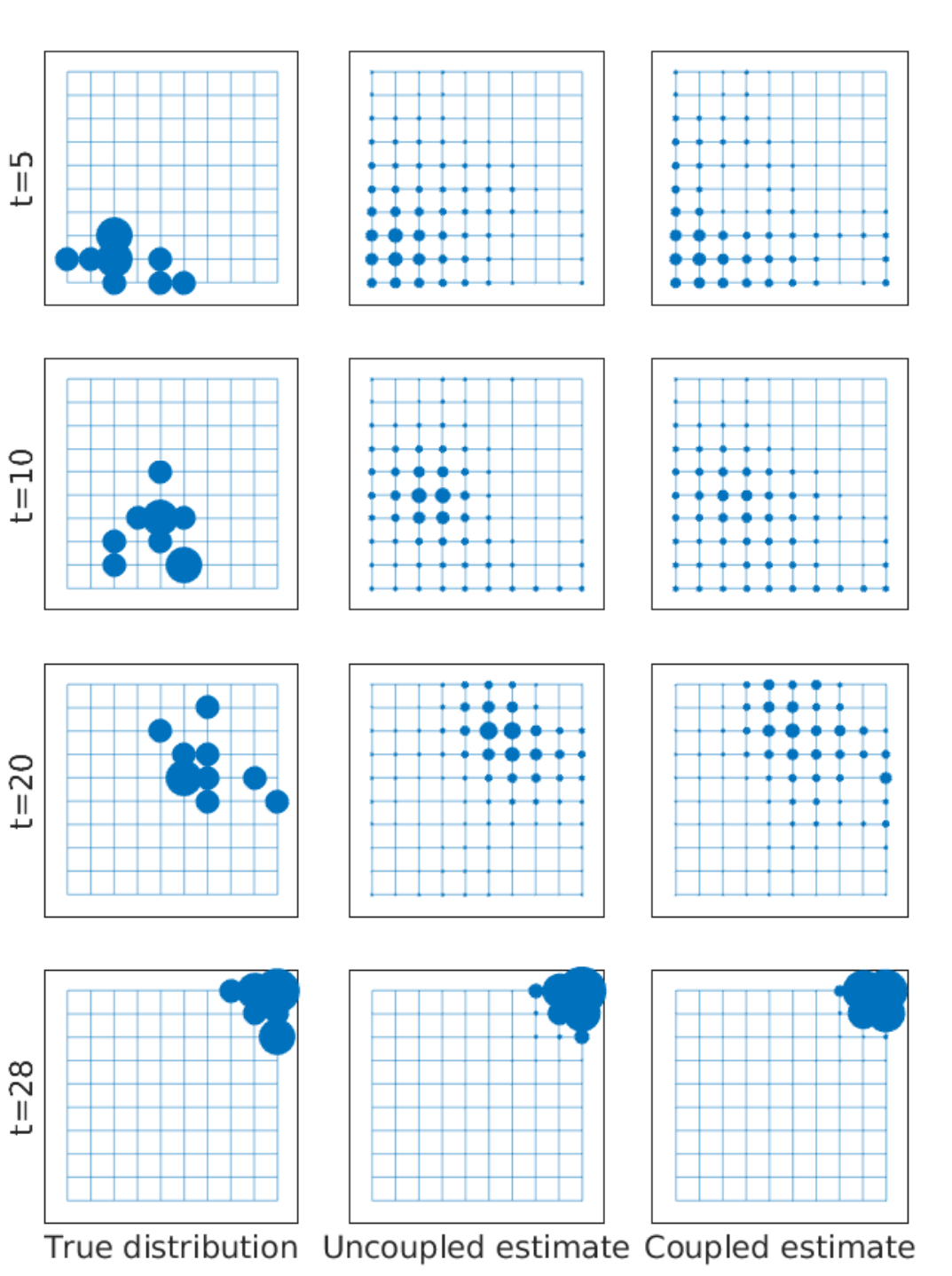}	}
\subfigure[$N=100$ agents.]{ \label{fig:ensemble_N100} \includegraphics[ trim={2pt 5pt 12pt 17pt}, clip, width=.3\textwidth]{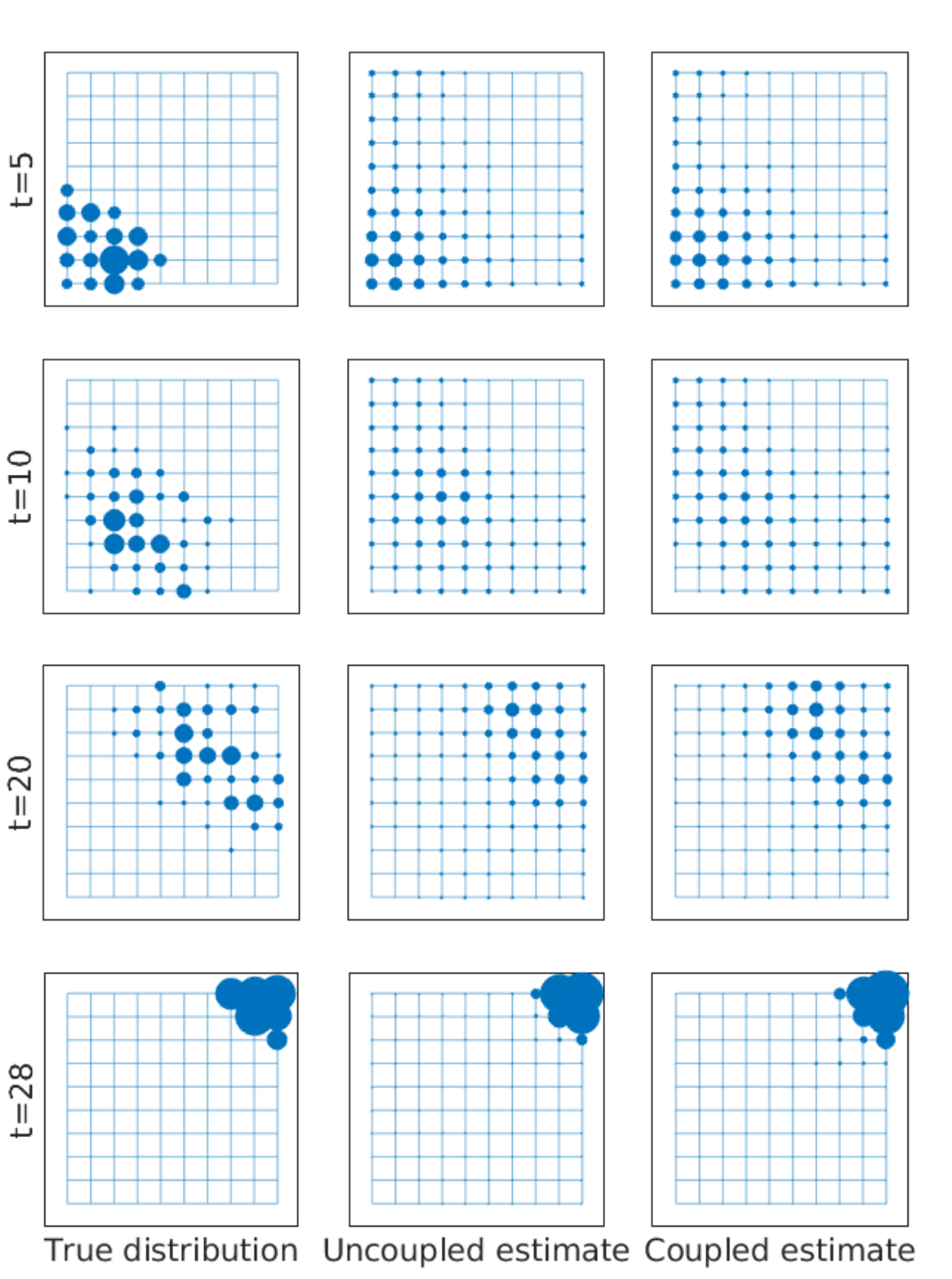}
}
\subfigure[$N=1000$ agents.]{ \label{fig:ensemble_N1000} \includegraphics[ trim={2pt 5pt 12pt 17pt}, clip, width=.3\textwidth]{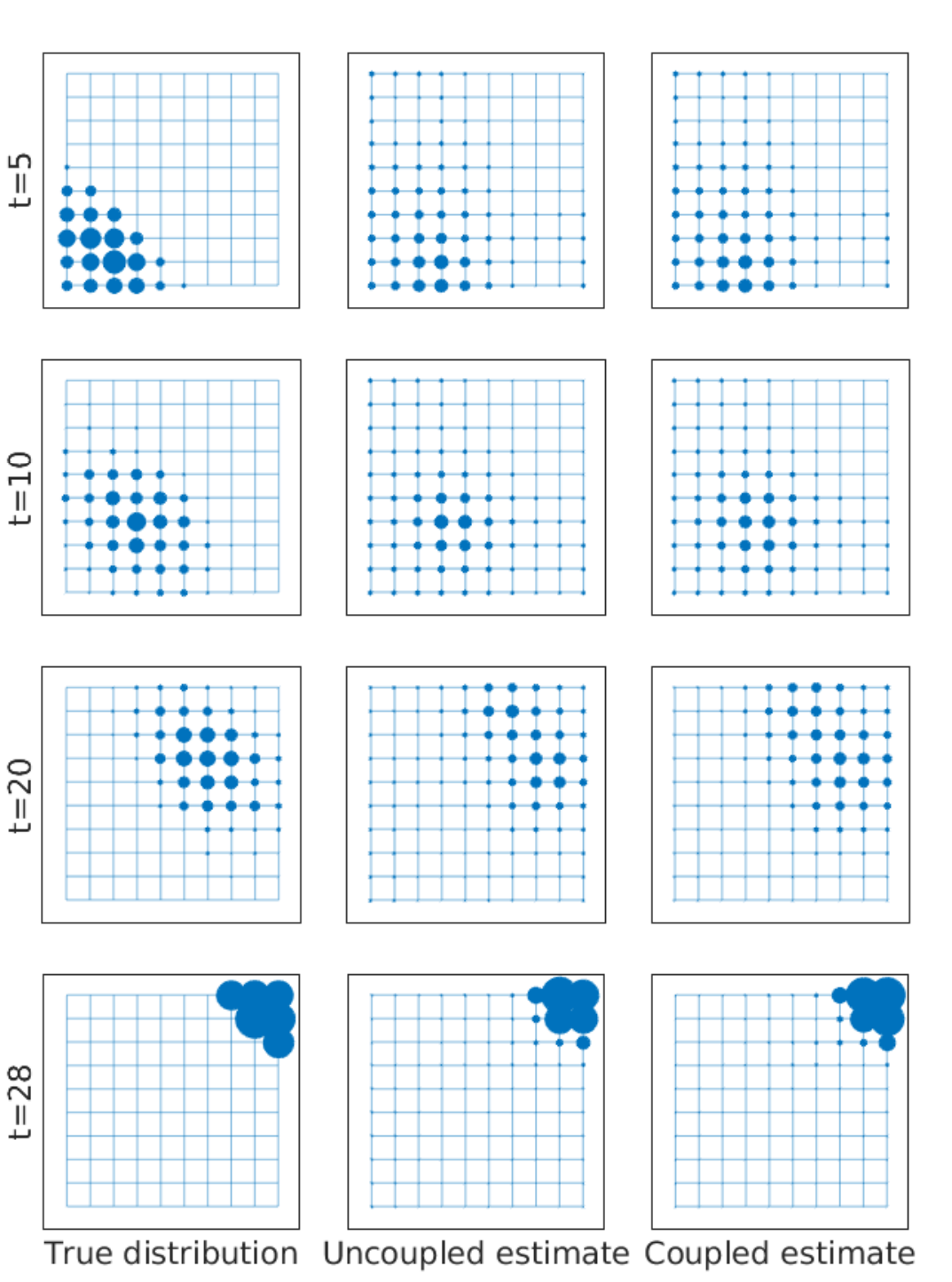}
}
	\caption{True ensemble flow and estimates with the two observation models for a varying number of agents.}  \vspace{-10pt} 
	\label{fig:ensemble}
\end{figure*}

Note that the number of observations at each time instance for the uncoupled estimate is only of the size $2N_S =30$, whereas the number of coupled observations is $2^{N_S}=32768$. The improved observation model thus comes at an increased computational cost, which is exponential in the number of sensors, and becomes infeasible for larger numbers of sensors.

\section{Conclusion}

In this work we consider multi-marginal optimal transport problems with cost functions that decouple according to a tree, and we show that the entropy regularized formulation of this problem is equivalent to a generalization of a time and space discrete Schr\"odinger bridge defined on the same tree.
Moreover, we derive an efficient algorithm for solving this problem. 
We also compare the multi-marginally regularized optimal transport problem to a commonly used pairwise regularized optimal transport problem and illustrate the benefits 
 in theory and practice. Finally, we describe how to apply the framework to the problem of tracking an ensemble of indistinguishable agents.

Interestingly, the construction of the marginals in Theorem~\ref{thm:multi_omt_tree} is of the same form as the belief propagation algorithm \cite{Yedidia03,teh2002propagation} for inference in graphical models. In this interpretation, the vectors $u_j$ correspond to the local evidence in node $j\in\ccV$ and $\alpha_{(j_1,j_2)}$ is the message passed from node $j_2$ to node $j_1$.
Moreover, the objective function in \eqref{eq:A_bethe} can be interpreted as the Bethe free energy \cite{yedida05}, which is connected to belief propagation \cite{Yedidia03}.
These similarities of our framework to belief propagation algorithms
are investigated in \cite{haasler2020pgm}, and
 could be a stepping stone to extending our framework to graphs with cycles. 
Another direction of interest is the extension to continuous state models.

\appendix

\section{Proofs} \label{sec:appendix_proofs}
For an index set $\Gamma$, we denote the double sum $\sum_{j\in \Gamma}\sum_{i_j}$ by $\sum_{i_j : j\in \Gamma}$.
For simplicity of notation, we let $\ccM$ denote the set of matrices $M^{(j_1,j_2)}$, for $(j_1,j_2) \in \ccE$, and similarly write $\bm\mu$ and $\bm\lambda$ for the respective sets of optimization variables.
The proof of Theorem~\ref{thm:multi_omt_tree} is based on the following lemma.

\begin{lemma}[\!\!{\cite[Lemma 1 and 2]{elvander19multi}}] \label{lem:proj}
	Let $\bU= u_1 \otimes u_2 \otimes \dots \otimes u_J$, for a set of vectors $u_1, u_2,\dots,u_J$, and $\bK$ be a tensor of the same size.
	If
	$\langle \bK, \bU \rangle = w^T  u_j$,
	for a vector $w$ that does not depend on $u_j$, then it holds that
	$P_j(\bK \odot \bU ) = w \odot u_j$.

	Similarly, if
	$\langle \bK, \bU \rangle = w^T \diag( u_{j_1} ) W \diag(u_{j_2}) \hat w$,
	for two vectors $w,\hat w$, and a matrix $W$ that does not depend on $u_{j_1}$ and $u_{j_2}$, then it holds that $P_{j_1,j_2}(\bK \odot \bU ) = \diag(w \odot u_{j_1} ) W \diag(u_{j_2} \odot \hat w)$.
\end{lemma}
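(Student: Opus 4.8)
The plan is to prove both statements by reducing each to a single ``partial contraction'' of the tensor $\bK$ against all the factors of $\bU$ except the one (or two) indices that are being kept, and then observing that the inner product in the hypothesis and the projection in the conclusion are built from exactly the same object.

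First, for the single-marginal case, I would expand $\langle \bK, \bU \rangle = \sum_{i_1,\dots,i_J} \bK_{i_1,\dots,i_J} \prod_{k=1}^J (u_k)_{i_k}$ using the outer-product structure $\bU = u_1 \otimes \dots \otimes u_J$, and factor the sum so that the summation over $i_j$ is pulled to the outside. This isolates the vector $\tilde{w}$ with $\tilde{w}_{i_j} = \sum_{i_k : k \neq j} \bK_{i_1,\dots,i_J} \prod_{k \neq j} (u_k)_{i_k}$, giving $\langle \bK, \bU \rangle = \tilde{w}^T u_j$ with $\tilde{w}$ independent of $u_j$. The same $\tilde{w}$ appears directly in the projection: from the definition \eqref{eq:proj_discrete}, $P_j(\bK \odot \bU)_{i_j} = (u_j)_{i_j}\, \tilde{w}_{i_j}$, that is, $P_j(\bK \odot \bU) = u_j \odot \tilde{w}$. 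Since the hypothesis gives $w^T u_j = \tilde{w}^T u_j$ as an identity in $u_j$, and a linear functional determines its coefficient vector, $w = \tilde{w}$, and the claim $P_j(\bK \odot \bU) = w \odot u_j$ follows.

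For the pairwise case I would argue analogously, but contracting $\bK$ against every factor except $u_{j_1}$ and $u_{j_2}$. This produces the matrix $\tilde{W}$ with $\tilde{W}_{i_{j_1}, i_{j_2}} = \sum_{i_k : k \neq j_1, j_2} \bK_{i_1,\dots,i_J} \prod_{k \neq j_1,j_2} (u_k)_{i_k}$, so that $\langle \bK, \bU \rangle = u_{j_1}^T \tilde{W} u_{j_2}$ and, again straight from the definition, $P_{j_1,j_2}(\bK \odot \bU) = \diag(u_{j_1}) \tilde{W} \diag(u_{j_2})$. Matching this against the hypothesised bilinear form $w^T \diag(u_{j_1}) W \diag(u_{j_2}) \hat{w} = u_{j_1}^T \bigl( \diag(w) W \diag(\hat{w}) \bigr) u_{j_2}$ forces $\tilde{W} = \diag(w) W \diag(\hat{w})$, and substituting yields $P_{j_1,j_2}(\bK \odot \bU) = \diag(u_{j_1}) \diag(w) W \diag(\hat{w}) \diag(u_{j_2}) = \diag(w \odot u_{j_1}) W \diag(u_{j_2} \odot \hat{w})$, as claimed.

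The only genuinely delicate point is the uniqueness step: concluding from the hypothesis that the explicitly given $w$ (respectively the product $\diag(w) W \diag(\hat{w})$) coincides with the natural contraction $\tilde{w}$ (respectively $\tilde{W}$). This requires reading the stated equality as an identity in the free vector $u_j$ (respectively in $u_{j_1}$ and $u_{j_2}$), so that one may equate the coefficient vector of a linear form, or the matrix of a bilinear form, on both sides; this is legitimate here because the manipulations that bring $\langle \bK, \bU\rangle$ into canonical form are mere rearrangements of the sum and hence hold for all values of the kept factors. I expect this to be the main thing to state carefully, whereas the factorisation of the sums is routine. I would also note that, although the decomposition $\tilde{W} = \diag(w) W \diag(\hat{w})$ is not itself unique, the resulting projection formula depends only on the product, so no ambiguity arises.
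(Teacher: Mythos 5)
Your proof is correct. Note that the paper does not prove this lemma itself but imports it from \cite[Lemma 1 and 2]{elvander19multi}; your argument --- contracting $\bK$ against all factors of $\bU$ except the retained index (or pair of indices), observing that the projection $P_j(\bK\odot\bU)$ and the inner product $\langle\bK,\bU\rangle$ are built from the same partial contraction, and then identifying coefficients of the resulting linear/bilinear form in the free vector(s) --- is the standard direct computation used there, and you correctly flag and resolve the one delicate point, namely that the hypothesised equality must be read as an identity in $u_j$ (respectively $u_{j_1},u_{j_2}$) for the identification $w=\tilde w$ (respectively $\diag(w)W\diag(\hat w)=\tilde W$) to be valid.
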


\begin{proof}[Proof of Theorem~\ref{thm:multi_omt_tree}]
	Due to the decoupling of the cost tensor $\bC$ as in \eqref{eq:cost_tensor_tree}, the tensor $\bK = \exp( - \bC/\epsilon)$ decouples  according to \eqref{eq:Kmultimarginal}
	with the matrices $K^{(j_1,j_2)}=\exp(C^{(j_1,j_2)}/\epsilon)$, for $(j_1,j_2)\in \ccE$. We can therefore write
	\begin{equation}
	\langle \bK, \bU \rangle =   \sum_{i_{j}:j\in\ccV} \bigg( \prod_{(j_1,j_2)\in \ccE} K^{(j_1,j_2)}_{i_{j_1},i_{j_2}} \bigg) \bigg( \prod_{k \in \ccV} (u_{k})_{i_{k}} \bigg) 
	=  \sum_{i_j} (u_j)_{i_j} (w_j)_{i_j},
	\end{equation}	
	where the vector $w_j$ is of the form
	\begin{equation}
	\begin{aligned}
		(w_j)_{i_\ell} =&  \sum_{i_k: k \in \ccV \setminus j } \bigg( \prod_{(j_1,j_2)\in \ccE} K^{(j_1,j_2)}_{i_{j_1},i_{j_2}} \bigg) \bigg( \prod_{j \in \ccV, j \neq \ell } (u_{j})_{i_{j}} \bigg). \\
	\end{aligned}
	\end{equation}
Consider the underlying tree to be rooted in node $j$. Then this can be written as	
$(w_j)_{i_\ell} = \prod_{ k: k \in \ccN_{j}  } \left( \alpha_{(j,k)}\right)_{i_j},$
where $\alpha_{(p(k),k)}\in \mR^n$ is defined by
	\begin{equation} \label{eq:alpha_inproof}
	\left( \alpha_{(p(k),k)}\right)_{i_{p(k)}} = \sum_{i_\ell: \ell\geq k} \prod_{ m>k } K^{(p(m),m)}_{i_{p(m)},i_{m}}  (u_m)_{i_m}.
	\end{equation}
If $k\in\ccL$, then
\begin{equation}
\left( \alpha_{(p(k),k)}\right)_{i_{p(k)}} = \sum_{i_k}  K^{(p(k),k)}_{i_{p(k)},i_{k}}  (u_k)_{i_k} =\left( K^{(p(k),k)} u_k \right)_{i_{p(k)}}.
\end{equation}
Otherwise, it holds
\begin{equation}
\begin{aligned}
\left( \alpha_{(p(k),k)}\right)_{i_{p(k)}}  
&= \sum_{i_k} \bigg( K^{(p(k),k)}_{i_{p(k)},i_{k}}  (u_k)_{i_k}  \prod_{ \substack{ \ell \in \ccN_k \\ \ell \neq p(k)}} \!\! \Big(  \! \sum_{i_m : m \geq \ell } \prod_{ j > \ell } K^{(p(j),j)}_{i_{p(j)},i_{j}}  (u_j)_{i_j} \Big) \bigg) \\
&= \sum_{i_k} \bigg( K^{(p(k),k)}_{i_{p(k)},i_{k}}  (u_k)_{i_k} \prod_{ \substack{ \ell \in \ccN_k \\ \ell \neq p(k)}} \left( \alpha_{(p(\ell,\ell)} \right)_{i_{p(\ell)}} \bigg).
\end{aligned}
\end{equation}
This inductively defines the vectors $\alpha_{(j,k)}$ as in \eqref{eq:alpha}. The expression for the projection follows from Lemma~\ref{lem:proj} with $w_j = \bigodot_{k\in \ccN_j} \alpha_{(j,k)}$.
\end{proof}

\begin{proof}[Proof of Proposition~\ref{prp:GammaL}]
	Note that the optimal solution to \eqref{eq:omt_multi_regularized} with cost structured according to the tree $\ccT=(\ccV,\ccE)$ is of the form $\bM= \bK \odot \bU$, where $\bK$ is defined by \eqref{eq:Kmultimarginal} and $\bU$ is defined by \eqref{eq:U}.
	To prove the first claim, let $k\in \ccV$ lie on the direct path between $j\in \ccV$ and $\ell \in \ccV$.
	Then straightforward computation implies that
	\begin{equation}
	\left(P_{jk\ell} (\bK \odot \bU )\right)_{i_j,i_k,i_\ell} \left( P_{k} (\bK \odot \bU)\right)_{i_k}  = \left(P_{jk} (\bK \odot \bU )\right)_{i_j i_k} \left(P_{k\ell} (\bK \odot \bU)\right)_{i_k,i_\ell}.
	\end{equation}
	In particular, for any fixed $i_k$, the matrix
	$\left(P_{jk\ell} (\bK \odot \bU )\right)_{\cdot,i_k,\cdot}$
	is of rank 1. Hence, given $P_k(\bK \odot \bU)$, the marginals $P_j(\bK \odot \bU)$ and $P_\ell(\bK \odot \bU)$ have no influence on each other. Thus, the optimal transport problem \eqref{eq:omt_multi_regularized} on $\ccT$ can be decoupled into smaller problems, by cutting $\ccT$ in $k\in \ccV$, and solving an optimal transport problem \eqref{eq:omt_multi_regularized} on each of the resulting subtrees, where $k\in \Gamma$ and $k\in \ccL$ for each of the subtrees.
	
	To prove the second claim, recall from the definition of the tensor $\bU$ in \eqref{eq:U} that $u_j=\ett$ for all $j\in\ccV\setminus\Gamma$.
	Thus, for $k$ such that $(k,\ell)\in\ccE$, the corresponding vector \eqref{eq:alpha} is $\alpha_{(k,\ell)}= K^{(k,\ell)} \ett$, which is constant and does not need to be updated when recomputing the projections \eqref{eq:proj_j}.
	It thus suffices to solve \eqref{eq:omt_multi_regularized} on the subtree of $\ccT$ that is obtained by removing $\ell \in \ccV$ and $(k,\ell) \in \ccE$ from $\ccT$.
\end{proof}

\begin{proof}[Proof of Theorem~\ref{thm:HMM_tree_sol}]
	Assume that the only edge with node $j=1$ is denoted $(1,2)$.
	We add the trivial constraints $M^{(j_1,j_2)} \ett = (M^{(p(j_{1}),j_1)})^T \ett$
	 for $(j_1,j_2)\in\ccE_r\setminus (1,2)$. We relax this constraint, together with the constraint on $\mu_1$, and let $\lambda_{(j_1,j_2)}$, for $(j_1,j_2)\in\ccE_r$, denote the corresponding dual variables.
	Furthermore, we relax the constraints $(M^{(j_p,j)})^T \ett = \mu_j$ with dual variables $\lambda_j$, for the leaves $j\in\ccL \setminus 1$. A Lagrangian for \eqref{eq:HMM_tree} is then
	\begin{equation}
	\begin{aligned}
	  L( \ccM,\bm\mu,\bm\lambda) &=  \sum_{(j_1,j_2) \in \ccE}  H \left( M^{(j_1,j_2)} | \diag( \mu_{j_1}) A^{(j_1,j_2)} \right)   + \lambda_{(1,2)}^T ( M^{(1,2)} \ett - \mu_1) \\
	 & \!\!\!\!\!\!\! + \sum_{  j_2 \in \ccL} \lambda_{j_2}^T ( \mu_{j_2} - (M^{(j_1,j_2)})^T \ett ) + \! \sum_{\substack{(j_1,j_2) \in \ccE \\ j_1 \neq 1 } }  \lambda_{(j_1,j_2)}^T ( M^{(j_1,j_2)} \ett - (M^{(j_{1_p}, j_1)})^T \ett ).
	\end{aligned}
	\end{equation}
	When $j_2$ is an inner node on the tree, i.e., $j_2\notin \ccL$, the derivative with respect to the entries $M^{(j_1,j_2)}_{i_1 i_2}$, for all $i_1,i_2=1,\dots,n$, and $(j_1,j_2) \in \ccE_r$ is
	\begin{equation} \label{eq:Lagrange_der}
	\log\Bigg( \frac{M^{(j_1,j_2)}_{i_1 i_2}}{(\mu_{j_1})_{i_1} A^{(j_1,j_2)}_{i_1 i_2}} \Bigg)   +  (\lambda_{(j_1,j_2)})_{i_2} -  \sum_{ k: (j_2,k) \in \ccE } (\lambda_{(j_2,k)})_{i_2}.
	\end{equation}
	Since \eqref{eq:HMM_tree} is convex, a mass transport plan is optimal if this gradient vanishes, yielding the expression in terms of the other variables,
		\begin{equation} \label{eq:Mj1j2}
	M^{(j_1,j_2)} = \diag(\mu_{j_1} ./ v_{(j_1,j_2)})  A^{(j_1,j_2)} \diag\bigg( \bigodot_{k : (j_2,k) \in \ccE} \!\!\!  v_{(j_2,k)} \bigg),	
	\end{equation}
	where $v_{(j_1,j_2)} = \exp(\lambda_{(j_1,j_2)})$ for all $(j_1,j_2)\in \ccE_r$.
	In case $j_2\in\ccL$, the last sum in the derivative \eqref{eq:Lagrange_der} is replaced by $ (\lambda_{j_2})_l$. Defining $v_{j_2}=\exp(\lambda_{j_2})$, the optimal mass transport plan is thus of the form
	\begin{equation} \label{eq:Mj1j2_leaf}
	M^{(j_1,j_2)} = \diag( \mu_{j_1} ./ v_{(j_1,j_2)})  A^{(j_1,j_2)} \diag( v_{j_2} ).
	\end{equation}
	Next, note that the marginal of the optimal transport plan satisfies
 \begin{align} \label{eq:muj2}
&\mu_{j_2} = M_{(j_1,j_2)}^T \ett = \hat \varphi_{j_2} \odot \varphi_{j_2}, \\
\text{with} \quad & \varphi_{j_2} = \bigodot_{k:(j_2,k) \in \ccE} v_{(j_2,k)},
\quad \mbox{ and }\quad 
\hat \varphi_{j_2} = (A^{(j_1,j_2)})^T \left( \mu_{j_1} ./ v_{(j_1,j_2)} \right). \label{eq:phij}
\end{align} 
	Since for all $k$ such that $(j_2,k)\in \ccE_r$ it holds that $M_{(j_2,k)}\ett = \mu_{j_2}$, we get
	\begin{equation} \label{eq:vjl}
	v_{(j_2,k)} =  A^{(j_2,k)} \bigg( \bigodot_{\ell: (k,\ell) \in \ccE} v_{(k,\ell)} \bigg) =  A^{(j_2,k)}  \varphi_{k},
	\end{equation}
	which completes the definition of the vectors $(\varphi_j)_{j\in \ccV}$.
	Similarly to \eqref{eq:muj2} it holds that
	\begin{equation} \label{eq:muj1}
	\mu_{j_1} = \bigg( \bigodot_{k:(j_1,k) \in \ccE} v_{(j_1,k)} \bigg) \odot  A \left( \mu_{j_{\parent(1)}} ./ v_{(j_{\parent(1)},j_1)} \right).
	\end{equation}
	Plugging \eqref{eq:muj1} into $\hat \varphi_{j_2}$ in \eqref{eq:phij}, yields the recursive definition of the vectors $(\hat \varphi_j)_{j \in \ccV}$.
	The expression for the mass transport plans $M^{(j_1,j_2)}$, for $(j_1,j_2)\in \ccE$, in terms of the vectors $\hat \varphi_{j_1}$, $\varphi_{j_2}$ and $\varphi_{j_1\setminus j_2}$ follows by identifying them in expressions \eqref{eq:Mj1j2} and \eqref{eq:Mj1j2_leaf}.
\end{proof}

\begin{proof}[Proof of Proposition~\ref{prp:HMM_tree_anyroot}]
	For any matrices $M,A\in \RR^{n\times n}_+$ and $\mu\in \RR^n_+$ one can write the term $ H\left( M \,|\, \diag(\mu) A \right)$ as
	\begin{equation} \label{eq:KL_two_sums}
	\sum_{i,j=1}^n \left( M_{ij} \log \left( \frac{M_{ij}}{A_{ij}}  \right) - M_{ij} \right) + \sum_{i,j=1}^n  \left( \mu_i A_{ij} - M_{ij} \log(\mu_i) \right).
	\end{equation}
	Since $M \ett = \mu$ and $A \ett = \ett$, the second sum can be simplified to
	$ \sum_{i=1}^n  \left( \mu_i  - \mu_i \log(\mu_i) \right)$.
	Furthermore, adding the term
	$\sum_{i,j=1}^n A_{ij} - \sum_{i=1}^n 1 = 0$
	to \eqref{eq:KL_two_sums}, the expression can be written as
	$H\left( M \,|\,  A \right) - H( \mu )$. 
	Due to the underlying tree structure of problem \eqref{eq:HMM_tree}, the number of outgoing edges from the root node $j_r$ is $\deg(j_r)$, and for all other vertices $j\in\ccV \setminus \{j_r\}$ the number of outgoing edges is $\deg(j)-1$. In the case that $j_r\in\ccL$, since the marginal $\mu_{j_r}$ is known, the term $H(\mu_{j_r})$ is constant and can be removed from the objective without changing the optimal solution.
\end{proof}

\begin{proof}[Proof of Theorem~\ref{thm:equivalence}]
	
	Note that for a rooted directed tree, in each node $j\in \ccV \setminus \ccL$, there is one incoming edge and the rest of the connected edges are outgoing. Its neighbouring nodes are therefore given by the set $\ccN_j = \parent(j) \cup \left\{ k : (j,k) \in\ccE_r \right\}$.
	Thus,
	\begin{equation} \label{eq:alpha_prod}
	\bigodot_{k\in \ccN_j} \alpha_{(j,k)} = (K^{(\parent(j),j)})^T \alpha_{(j,\parent(j))} \odot \bigodot_{k:(j,k) \in\ccE_r} K^{(j,k)} \alpha_{(j,k)}.
	\end{equation}	
	For any edge $(j,k) \in \ccE$ and for the reverse edge $(j,j_p)$ it holds that
\begin{align}
	\alpha_{(j,k)} &= \bigodot_{ \ell \in \ccN_k \setminus \{j\} } K^{(k,\ell)} \alpha_{(k,\ell)} =  \bigodot_{\ell:(k,\ell) \in\ccE} K^{(k,\ell)} \alpha_{(k,\ell)}\\
\alpha_{(j,\parent(j))} &= \!\!\!\!\! \!\!\! \bigodot_{ \ell \in \ccN_{\parent(j)} \setminus \{j\} } \!\!\!\!\! \!\! K^{(\parent(j),\ell)} \alpha_{(j,\ell)}
	\! = \!(K^{(\parent(\parent(j)),\parent(j))})^T \alpha_{(\parent(j),\parent(\parent(j))} \odot  \!\!\!\!\! \bigodot_{\substack{ \ell:(\parent(j),\ell) \in\ccE,\\ \ell \neq j}}  \!\!\!\!\! K^{(\parent(j),\ell)} \alpha_{(\parent(j),\ell)}.
\end{align}
	By associating the first term in \eqref{eq:alpha_prod} with $\hat \varphi_j$ and the second term with $ \varphi_j$, we see that the tensor structure of $\bK$ gives rise to the construction of $\varphi_j$ and $\hat \varphi_j$ as in Theorem~\ref{thm:HMM_tree_sol}. Hence, we can define a tensor in analogy to $\bU$ of the form $\bV= (\ett./v_1) \otimes v_2 \otimes \dots \otimes v_J$, where $v_j$ are the vectors from Theorem~\ref{thm:HMM_tree_sol} if $j\in \ccL$, and $v_j=\ett$ if $j\in \ccV \setminus \ccL$. Then, the tensor $\bK \odot \bV$ has the same marginals as the tensor $\bK \odot \bU$. Due to an extension of Sinkhorn's theorem to tensors \cite{franklin1989}, it follows $\bU = \bV$ (up to scaling with a factor and its inverse in the vectors $u_1,\dots,u_J$), and $P_j(\bM) = \mu_j$ for all $j\in \ccV$.
\end{proof}

\begin{proof}[Proof of Proposition~\ref{prp:HMM_sinkhorn}]
	Based on the proof to Theorem~\ref{thm:HMM_tree_sol}, a Lagrange dual to problem \eqref{eq:HMM_tree} can be formulated as to maximize
	\begin{equation} \label{eq:dual_HMM_tree}
	\begin{aligned}
	&- \!\!\!\! \sum_{(j_1,j_2)\in \ccE} \!\!\!\! \left(\mu_{j_1} \odot \exp(-\lambda_{(j_1,j_2)}) \right)  A^{(j_1,j_2)}  \Big( \!\!\! \bigodot_{k:(j_2,k)\in \ccE} \!\!\!\!\! \exp(\lambda_{(j_2,k)}) \Big)  \\
	& - \sum_{j \in\ccL}   \left(\mu_{j_p} \odot \exp(-\lambda_{(j_p,j)}) \right) A^{(j_1,j_2)} \left(  \exp(\lambda_{j}) \right)   - \lambda_{(1,2)}^T \mu_1 + \sum_{j \in \ccL} \lambda_j^T \mu_j
	\end{aligned}
\end{equation}
	with respect to $\mu_j$, for all inner nodes $j$, and the dual variables $\lambda_{(j_1,j_2)}$, for $(j_1,j_2)\in \ccE$, and $\lambda_j$, for $j\in\ccL$.
	A block coordinate ascent in the dual is then to iteratively maximize \eqref{eq:dual_HMM_tree} with respect to one of the dual variable vectors, while keeping the others fixed. 
	Denote $v_{(j_1,j_2)}=\exp(\lambda_{(j_1,j_2)})$, for $(j_1,j_2)\in \ccE$, and ${v_j = \exp(\lambda_j)}$, for $j\in \ccL$.
	The gradient of \eqref{eq:dual_HMM_tree} with respect to $\lambda_{(1,2)}$ vanishes if it holds that
	\begin{equation}
	v_{(1,2)}  =A^{(j_1,j_2)} \bigg( \bigodot_{k:(2,k)\in\ccE} v_{(2,k)} \bigg),
	\end{equation}
	and the gradient with respect to $\lambda_j$ vanishes if $v_j = \mu_j ./ ( (A^{(j_p,j)})^T (\mu_{j_p}./ v_{(j_p,j)} ) )$, for $j\in\ccL$, and where $j_p\in \ccV$ is the parent of node $j$.
	Finally, the gradient of \eqref{eq:dual_HMM_tree} with respect to $\lambda_{(j_1,j_2)}$, where $j_1\neq 1$, vanishes if
	$M_{(j_1,j_2)} \ett = M_{(j_p,j_1)}^T \ett$,
	where $j_p$ is the parent of node $j_1$,
	which leads to the recursive definition of $\varphi_j$ and $\hat \varphi_j$, for $j\in \ccV$, as in Theorem~\ref{thm:HMM_tree_sol}. Hence, given an initial set of positive vectors $v_{(1,2)}$ and $v_j$, for $j\in\ccL$, the scheme \eqref{eq:sinkhorn_hmm} is a block coordinate ascend in a Lagrange dual of problem \eqref{eq:HMM_tree}.	
\end{proof}

\bibliographystyle{siamplain}

\bibliography{ref}



\section{Supplementary material}


\begin{proof}[Proof of Proposition~\ref{prp:multi_omt_tree_pairwise}]
	Let $r\in\ccL$ and assume that $j_1$ lies on the path from $r$ to $j_L$ (note that $r=j_1$ if $j_1\in\ccL$).
	For the pairwise marginals we would like to write the inner product   as in Lemma~\ref{lem:proj}:
	\begin{equation}
	\begin{aligned}
	\langle \bK, \bU \rangle = &\sum_{i_{j}:j\in\ccV} \bigg( \prod_{(k_1,k_2)\in \ccE_r} K^{(k_1,k_2)}_{i_{k_1},i_{k_2}} \bigg) \bigg( \prod_{j \in \ccV} (u_{j})_{i_{j}} \bigg) \\
	= & \sum_{i_{j_1},i_{j_L}} (w_{j_1})_{i_{j_1}} (u_{j_1})_{i_{j_1}} W_{i_{j_1}i_{j_L}} (u_{j_L})_{i_{j_L}} (\hat w_{j_L})_{i_{j_L}}.
	\end{aligned}
	\end{equation}
	Note that the nodes $\ccV\setminus\{j_1, j_L\}$ can be partitioned into three sets, which are separated by the nodes $j_1$ and $j_L$: $\{j\in \ccV:j \ngeq j_2, j\neq j_1\}$, $\{j\in \ccV: j>j_1, j\ngeq j_L\}$ and  $\{j\in \ccV:j>j_L \}$.	Then $w_{j_1}$ corresponds to the contribution from the first set 
	\begin{equation}
	(w_{j_1})_{i_{j_1}} = \!\! \sum_{i_k : k \ngeq j_2, k\neq j_1} \!\! \bigg( \! \prod_{ \substack{(k_1,k_2)\in \ccE_r  k_2 \ngeq j_2 }} \! \!\!  K^{(k_1,k_2)}_{i_{k_1},i_{k_2}} \bigg) \bigg( \! \prod_{ \substack{k \in \ccV\\ k\ngeq  j_2, k\neq j_1 }} \!\!\! (u_{k})_{i_{k}} \bigg) 
	=  \prod_{k \in \ccN_{j_1} \setminus j_2} \left(\alpha_{(j_1,k)}\right)_{i_{j_1}} \!,
	\end{equation}
	and similarly $\hat w_{j_L}$ is the contribution from the third set
	\begin{equation}
	(\hat w_{j_L})_{i_{j_L}} =  \sum_{i_k : k> j_L} \bigg( \prod_{ \substack{(k_1,k_2)\in \ccE \\ k_2> j_L } }   K^{(k_1,k_2)}_{i_{k_1},i_{k_2}} \bigg) \bigg( \prod_{ \substack{k \in \ccV\\ k> j_L } } (u_{k})_{i_{k}} \bigg) 
	= \prod_{k \in \ccN_{j_L} \setminus j_{L-1}} \left( \alpha_{(j_L,k)} \right)_{i_{j_L}}.
	\end{equation}
	The matrix $W$ is then obtained by summing over all the indices corresponding to the second index set $\{i_k : k> j_1, k \ngeqslant j_L\}$
	\begin{equation}
	\begin{aligned}
	W_{i_{j_1} i_{j_L}} =& \!\! \sum_{i_k : k> j_1, k \ngeqslant j_L} \prod_{\substack{(k_1,k_2)\in \ccE \\ k_2>j_1, k_1 \ngtr j_L}} K^{(k_1,k_2)}_{i_{k_1},i_{k_2}} \prod_{\substack{k \in \ccV\\ k>j_1,k \ngeqslant j_L}} (u_{k})_{i_{k}}\\
	=& \sum_{i_{j_2},\dots,i_{j_{L-1}}} \Bigg( \prod_{\ell=2}^{L-1} \bigg( K^{(j_{\ell-1},j_\ell)}_{i_{j_{\ell-1}},i_{j_\ell}} (u_{j_\ell})_{i_{j_\ell}} \!\! \prod_{ \substack{k\in \ccN_{j_\ell}\\ k \neq j_{\ell-1}, j_{\ell+1}}} \!\! ( \alpha_{(j_\ell,k)})_{i_{j_\ell}} \bigg)  K^{(j_{L-1},j_L)}_{i_{j_{L-1}},i_{j_L}} \Bigg) \\
	=& \Bigg( \bigg( \prod_{\ell=2}^{L-1}  K^{(j_{\ell-1},j_\ell)} \diag \Big( u_{j_\ell} \odot \!\!\!\! \bigodot_{ \substack{k\in \ccN_{j_\ell}\\ k \neq j_{\ell-1}, j_{\ell+1}}} \!\!\!\! ( \alpha_{(j_\ell,k)}) \Big) \bigg)  K^{(j_{L-1},j_L)} \Bigg)_{i_{j_1},i_{j_L}}.
	\end{aligned}
	\end{equation}
	Applying Lemma~\ref{lem:proj}, we get the projection on the marginals $j_1$ and $j_L$.
\end{proof}

\begin{proof}[Proof of Corollary~\ref{cor:root_independent}]
	Let the set of matrices $M^{(j_1,j_2)}$, for $(j_1,j_2)\in \ccE_r$, and vectors $\mu_j$, for $j\in \ccV \setminus \ccL$, be the solution to \eqref{eq:HMM_tree} on the rooted directed tree $\ccT_r=(\ccV,\ccE_r)$. It is thus also the solution to \eqref{eq:HMM_tree_anyroot} on the same tree.	
	Changing the root to another vertex $\hat r \in \ccL$, requires switching the direction of all edges on the direct path between $r$ and $\hat r$.	
	This is done by replacing the respective transition probability matrix $ A^{(j_1,j_2)}$ by the reverse transition probability matrix $A^{(j_2,j_1)}$, which is given by $	A^{(j_2,j_1)} = \diag( \ett./ a_{j_2}) \big( A^{(j_1,j_2)} \big)^T \diag(a_{j_1}),$ 
	for two given vectors $a_{j_1}$ and $a_{j_2}$ \cite{pavon2010discrete}.
	Note that
	\begin{equation}
	\begin{aligned}
	&H\left( M^{(j_2,j_1)} \,|\, A^{(j_2,j_1)} \right) 
	= H\left( M^{(j_2,j_1)} \,|\, \diag(\ett ./ a_{j_1}) (A^{(j_1,j_2)})^T \diag( a_{j_2}) \right) \\
	& \qquad \qquad \qquad   = H\left( (M^{(j_1,j_2)})^T \,|\, A^{(j_1,j_2)} \right) + H( \mu_{j_1} | \diag( \ett./ a_{j_1} ) )  + H(\mu_{j_2} | \diag( a_{j_2} ).
	\end{aligned}
	\end{equation}
	Summing over these terms for all edges on the path between $r$ and $\hat r$, the last two terms cancel for all inner nodes $j_1,j_2\in\ccV\setminus \Gamma$, and are constants for $j_1,j_2\in \{ r, \hat r \}$.
	From problem \eqref{eq:HMM_tree_anyroot} we thus see that the optimal distributions $\mu_j$, for $j\in \ccV$, are unchanged, and for the transport plans on the reversed edges it holds $M^{(j_2,j_1)}= (M^{(j_1,j_2)})^T$.	
\end{proof}

\end{document}